 \let\mathscr\relax
\DeclareMathOperator{\vol}{Vol}
\DeclareMathOperator{\Var}{Var}
\newcommand {\E} {\mathbb{E}}
\newcommand {\R} {\mathbb{R}}
\newtheorem{thm}{Theorem}[section]
\newtheorem{cor}[thm]{Corollary}
\newtheorem{prop}[thm]{Proposition}
\newtheorem{lem}[thm]{Lemma}
\theoremstyle{definition}
\newtheorem{defn}[thm]{Definition}
\theoremstyle{remark}
\newtheorem{rem}[thm]{Remark}
\numberwithin{equation}{section}
\newcommand{\Z}{\mathbb{Z}}
\begin{document}
	\title{Expected nodal volume for non-Gaussian random band-limited functions}

	\author[Z. Kabluchko]{Zakhar Kabluchko}
	\address{Zakhar Kabluchko: Institut f\"ur Mathematische Stochastik,
		Westf\"alische Wilhelms-Universit\"at M\"unster, Germany}
	\email{zakhar.kabluchko@uni-muenster.de}

	\author[A. Sartori]{Andrea Sartori}
	\address{School of Mathematical Sciences, Tel Aviv University, Israel}
	\email{andrea.sartori.16@ucl.ac.uk}

	\author[I. Wigman]{Igor Wigman}
	\address{Department of Mathematics, King's College London, UK}
	\email{ igor.wigman@kcl.ac.uk}

	\maketitle
	\begin{abstract}
		The asymptotic law for the expected nodal volume of random non-Gaussian monochromatic band-limited functions is determined in vast generality. Our methods combine microlocal analytic techniques and modern probability theory. A particularly challenging obstacle needed to overcome is the possible concentration of nodal volume on a small proportion of the manifold, requiring solutions in both disciplines. As for the fine aspects of the distribution of nodal volume, such as its variance, it is expected that the non-Gaussian monochromatic functions behave qualitatively differently compared to their Gaussian counterpart, with some conjectures been put forward.
	\end{abstract}
	\section{Introduction}
	\label{intro}
	In the recent years a lot of effort has been put into understanding
	the geometry of Laplace eigenfunctions on smooth manifolds.
	Let $(M,g)$ be a smooth compact Riemannian manifold of dimension $n$, and $\Delta=\Delta_{g}$ the Laplace-Beltrami operator on $M$.
	Denote $\{\lambda_{i}\}_{i\ge 1}$ to be the (purely discrete) spectrum of $\Delta$, with the corresponding Laplace eigenfunctions
	$\phi_{i}$ satisfying
	\begin{equation*}
		\Delta \phi_{i}+\lambda_{i}^{2}\phi_{i} = 0.
	\end{equation*}
	An important qualitative descriptor of the geometry of $\phi_{i}$ is its {\em nodal set} $\phi_{i}^{-1}(0)$, and, in particular, the nodal volume $\mathcal{V}(\phi_{i})=\mathcal{H}^{n-1}(\phi_{i}^{-1}(0))$, that is the $(n-1)$-dimensional Hausdorff measure of $\phi_{i}^{-1}(0)$.
	
	The highly influential {\em Yau's conjecture} \cite{Ysurvey} asserts that, in the high energy limit $\lambda_{i}\rightarrow\infty$, the nodal volume of $\phi_{i}$ is commensurable with $\lambda_{i}$: there exists constants $C_{M}>c_{M}>0$ so that
	$$c_{M}\cdot \lambda_{i}\le \mathcal{V}(\phi_{i}) \le C_{M}\cdot \lambda_{i}.$$ Yau's conjecture was established for the real analytic manifolds \cite{B78,BG72,DF}, whereas, more recently, the optimal lower bound and polynomial upper bound were proved \cite{L2,L1,LM} in the smooth case.
	
	\vspace{2mm}
	
	In his seminal work \cite{B1}
	Berry proposed to compare the (deterministic) Laplace eigenfunctions on manifolds, whose geodesic flow
	is ergodic, to the \textit{random} monochromatic isotropic waves, that is, a Gaussian stationary isotropic random field
	$F_{\mu}:\mathbb{R}^{n}\rightarrow\mathbb{R}$, whose spectral measure $\mu$ is the hypersurface measure on the sphere $\mathbb{S}^{n-1}\subseteq \mathbb{R}^{n}$,
	normalized by unit total volume.
	Equivalently, $F_{\mu}(\cdot)$ is uniquely defined via its covariance function
	\begin{equation}
		\label{eq:covar mono}
		K_{\infty}(x,y):=\mathbb{E}[F_{\mu}(x)\cdot F_{\mu}(y)] = \int\limits_{\mathbb{S}^{n-1}}e^{2\pi i \langle x-y,\xi\rangle}d\mu(\xi)
	\end{equation}
	For example, in $2d$, the covariance function of $F_{\mu}:\mathbb{R}^{2}\rightarrow\mathbb{R}$ is given by
	$$\mathbb{E}[F_{\mu}(x)\cdot F_{\mu}(y)] = J_{0}(|x-y|),$$ with $J_{0}$ the Bessel $J$ function of order $0$. Berry's conjecture should be understood 	in some random sense, e.g. when averaged over the energy level. Alternatively, one can consider some random ensemble of eigenfunctions or their random linear combination, Gaussian or non-Gaussian.
	
	A concrete ensemble of the said type is that of {\em band-limited} functions \cite{SW}
	\begin{equation}
		\label{function}
		f_{T}(x)= f(x) = v(T)^{-1/2}\sum\limits_{\lambda_{i}\in [T-\rho(T),T]}a_{i}\phi_{i}(x),
	\end{equation}
	where $a_{i}$ are centred unit variance i.i.d. random variables, Gaussian or non-Gaussian,
	$T\rightarrow\infty$ is the {\em spectral parameter},
	and the summation is over the {\em energy window} $[T-\rho(T),T]$ of width $\rho=\rho(T)\geq 1$.  The
	convenience pre-factor, which has no impact on the nodal structure of $f_{T}(\cdot)$,
	\begin{equation}
		\label{defv}
		v(T):=\frac{(2\pi)^{n}}{\omega_n\cdot\vol(M)}\rho(T)T^{n-1}=c_{M}\rho(T)T^{n-1},
	\end{equation}
	where $\omega_{n}$ is the volume of the unit ball in $\mathbb{R}^{n}$, represents the asymptotic, as $T\rightarrow \infty$, number of summands\footnote{If $M$ is periodic and $\rho(T)=O(1)$ we assume that $\rho(T)$ is sufficiently large, depending on $M$, so that to avoid trivialities. Alternatively, one may assume that $T$ belongs to a particular subsequence, see \eqref{eq:choice T} below.} in \eqref{function} so that $f_{T}(x)$ is of asymptotic unit variance at each $x\in M$.
	
	\vspace{2mm}
	
	Regardless of whether or not $f_{T}(\cdot)$ in \eqref{function} is Gaussian,
	its covariance kernel is the
	function $K_{T}:M\times M \rightarrow \mathbb{R}$ given by
	\begin{equation}
		\label{eq:KT covar band-limited}
		K_{T}(x,y):= \mathbb{E}[f_{T}(x)\cdot f_{T}(y)] = \frac{1}{v(T)}\sum\limits_{\lambda_{i}\in [T-\rho(T),T]}\phi_{i}(x)\cdot \phi_{i}(y),
	\end{equation}
	coinciding with the \textit{spectral projector} in $L^{2}(M)$ onto the subspace spanned by the eigenfunctions
	$\{\phi_{i}\}_{\lambda_{i}\in [T-\rho(T),T]}$ (recall that $a_{i}$ are unit variance).
	In what follows we will focus on the most interesting (and, in some aspects, most difficult) {\em monochromatic} regime
	$\rho(T)=o_{T\rightarrow\infty}(T)$ (``short energy window").
	Our main result (Theorem \ref{main thm}) along with all our arguments remain valid for the case
	$$\rho \underset{T\rightarrow\infty}{\sim} \alpha\cdot T,$$ with some $\alpha\in [0,1]$,
	except that the limit random field is different, resulting in a different, but explicit, constant.

	It is well-known that, under suitable assumptions on $M$ (explicated below), and assuming, as we did, that
	$\frac{\rho(T)}{T}\underset{T\rightarrow\infty}{\longrightarrow}0$ sufficiently slowly,
	the covariance \eqref{eq:KT covar band-limited}, after scaling by $\sqrt{T}$, is asymptotic to \eqref{eq:covar mono}, around
	every (or almost every) reference point $x$, in the following sense. Let $\exp_{x}:T_{x}M\rightarrow M$ be the exponential map,
	that is a bijection between a ball $B(r)\subseteq \R^{n}$ centred at $0\in \mathbb{R}^n$ and some neighbourhood in $M$ of $x$, with $r>0$ depending only on $M$,
	independent of $x\in M$. Then, for every $R>0$,
	\begin{equation}
		\label{eq:KT->Kinf}
		K_{T}\left(\exp_{x}(x/T),\exp_{x}(y/T)\right) \underset{T\rightarrow\infty}{\longrightarrow} K_{\infty}(x,y),
	\end{equation}
	uniformly for $\|x\|,\|y\|\le R$, with $K_{\infty}(\cdot,\cdot)$ as in \eqref{eq:covar mono},
	with the convergence \eqref{eq:KT->Kinf} holding together with an arbitrary number of derivatives \cite{CH15,CH18,SVbook}.
	Since in the Gaussian case, the finite-dimensional distributions are determined by the covariances, if $f_{T}(\cdot)$ is Gaussian, the convergence \eqref{eq:KT->Kinf} readily implies that the law
	of $f_{T}$ locally converges to that of $F_{\mu}$. For the non-Gaussian case, a significant proportion of our argument below will exploit the the increasing number of summands in \eqref{function} to apply the Central Limit Theorem together with the convergence \eqref{eq:KT->Kinf} to yield the local convergence of the law of $f_{T}$ to that of $F_{\mu}$.
	
	For the linear combinations
	\eqref{function} of Laplace eigenfunctions on real analytic $M$, the deterministic upper bound analogue
	\begin{equation*}
		\mathcal{V}(f_{T}) \le C_{M}\cdot T
	\end{equation*}
	of Yau's conjecture remains valid, thanks to the work of Jerison-Lebeau \cite[Section 14]{DIbook}.	
	The principal result of this manuscript determines the
	precise asymptotic growth of the expected nodal volume of monochromatic random band-limited functions on generic real analytic manifolds with no boundary, under the mere assumptions that $a_{n}$ have finite second moment:

	\begin{thm}
		\label{main thm}
		Let $(M,g)$ be a real analytic compact $n$-manifold with empty boundary,
		assume that $\rho(T)=o_{T\rightarrow\infty}(T)$, and either $n\ge 3$ and $\rho(T)\ge 1$, or $n=2$ and $\rho(T)\ge \frac{T}{\log{T}}$,
		and let $f_{T}(\cdot)$ be the band-limited functions \eqref{function} with $a_{n}$ centred i.i.d. so that $\mathbb{E}[a_{n}^{2}]=1$.
		Then
		\begin{equation}
			\label{eq:main res}
			\E[\mathcal{V}(f_{T})] = \vol(M)\left(\frac{4\pi}{n} \right)^{1/2}  \frac{\Gamma\left( \frac{n+1}{2} \right)}{\Gamma\left( \frac{n}{2}  \right)}T
			+o_{T\rightarrow\infty}(T).
		\end{equation}
	\end{thm}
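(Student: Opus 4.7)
The plan is to reduce the computation to the Gaussian monochromatic Berry wave $F_{\mu}$ by rescaling at the wavelength $1/T$, and then invoke the Kac-Rice formula in the Gaussian setting. Fix $x\in M$ and consider the rescaled field $\widetilde f_{T,x}(y):=f_T(\exp_{x}(y/T))$ on a ball $B(R)\subset\R^{n}$. By the microlocal convergence \eqref{eq:KT->Kinf} of the spectral projector together with the corresponding convergence of derivatives, the covariance of $\widetilde f_{T,x}$ converges to $K_{\infty}$ uniformly on compacta. Coupling this with a Lindeberg-type Central Limit Theorem for the triangular array in \eqref{function}---whose applicability rests on the growing number $v(T)\to\infty$ of summands together with Weyl-law type $L^{\infty}$ control on eigenfunction contributions---one obtains the local convergence in finite-dimensional distribution, jointly with finitely many derivatives, of $\widetilde f_{T,x}$ to $F_{\mu}$. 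For $F_{\mu}$, a direct Kac-Rice computation exploiting isotropy and $\int_{\mathbb{S}^{n-1}}\xi_{1}^{2}\,d\mu=1/n$ yields the expected nodal volume density
\[c_{n}:=\E[\|\nabla F_{\mu}(0)\|]\cdot \varphi_{F_{\mu}(0)}(0)=\left(\frac{4\pi}{n}\right)^{1/2}\frac{\Gamma((n+1)/2)}{\Gamma(n/2)}\]
per unit Euclidean volume, where the prefactor $4\pi$ traces to the $2\pi$ in the Fourier convention of \eqref{eq:covar mono}.

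Since nodal volume scales homogeneously, the nodal volume of $f_T$ restricted to $\exp_x(B(R/T))$ equals $T^{-(n-1)}$ times that of $\widetilde f_{T,x}|_{B(R)}$. Covering $M$ by such wavelength-scale balls and summing yields
\[\E[\mathcal{V}(f_T)]=T\cdot\int_{M}\psi_{T}(x)\,d\vol(x)+o(T),\]
where $\psi_{T}(x)$ is the expected nodal volume density of $\widetilde f_{T,x}$ per unit Euclidean volume. The first goal is to show $\psi_{T}(x)\to c_{n}$ for almost every $x\in M$, which follows from the local distributional convergence above \emph{provided} that $\E[\mathcal{V}(\widetilde f_{T,x}|_{B(R)})]$ is known to pass to the limit as well.

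The principal obstacle, and the technical heart of the matter, is precisely this last point, flagged in the abstract: the possible concentration of nodal mass of $\widetilde f_{T,x}$ on arbitrarily small regions (e.g.\ near near-degenerate zeroes) prevents a naive extraction of the expected limit from weak convergence. The deterministic Jerison-Lebeau bound $\mathcal{V}(f_T)\le C_M T$ provides the global ceiling ruling out contributions of order larger than $T$, but does not a priori control the local Kac-Rice density. To circumvent this I would prove a uniform moment bound $\E[\mathcal{V}(\widetilde f_{T,x}|_{B(R)})^{1+\varepsilon}]\le C$, uniform in $x\in M$ and in large $T$, by combining a Kac-Rice representation of the factorial moments with anti-concentration (small-ball) estimates for the joint density of $(f_T(y),\nabla f_T(y))$ uniform in $y$. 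The needed small-ball bounds are produced from Berry-Esseen type rates for the non-Gaussian sums underlying \eqref{function}, ensuring the density is comparable to the Gaussian one at the relevant scale, together with the microlocal convergence \eqref{eq:KT->Kinf} of second-order statistics. Real analyticity of $M$ enters via Jerison-Lebeau and to control the measure-zero exceptional set of $x\in M$ where \eqref{eq:KT->Kinf} may degenerate.

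Once uniform integrability of $\psi_{T}(\cdot)$ is in hand, dominated convergence gives $\int_{M}\psi_{T}\,d\vol\to c_{n}\vol(M)$, from which \eqref{eq:main res} with the stated constant follows immediately.
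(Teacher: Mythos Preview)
Your overall skeleton --- localise to wavelength balls, establish convergence in law to $F_\mu$ via Lindeberg, compute the Gaussian Kac--Rice constant, then pass to the limit by dominated convergence once uniform integrability of the local nodal volume is secured --- matches the paper's architecture. The difference, and the genuine gap in your proposal, lies in the mechanism for uniform integrability.

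You propose to bound $\E[\mathcal{V}(\widetilde f_{T,x}|_{B(R)})^{1+\varepsilon}]$ via Kac--Rice factorial moments together with small-ball estimates for the joint density of $(f_T,\nabla f_T)$, supplied by Berry--Esseen. This cannot work under the hypotheses of the theorem: the $a_i$ are only assumed to have a finite second moment, so Berry--Esseen is unavailable, and for, say, Bernoulli coefficients there is no joint density at all. Even with extra moments, controlling the Kac--Rice density uniformly near the diagonal for a genuinely non-Gaussian field on a general manifold is a serious obstacle that this line does not address.

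The paper's route is entirely different and is where real analyticity does the real work, far beyond the global Jerison--Lebeau ceiling. One passes to the harmonic lift $f^H$ on $M\times\R$, complexifies via the Grauert tube, and shows (Proposition~\ref{doubling index bound}) that the local nodal volume is dominated by the doubling index $N(f^H,\cdot)$. This converts the problem into a tail estimate: $\mathcal{V}(F_x)>Q$ forces either $|f(x)|$ to be small or $\|f^H\|_{L^\infty}$ on a slightly larger ball to be exponentially large in $Q$. The latter is handled by a second-moment Chebyshev bound; the former needs only anti-concentration of the \emph{scalar} $f(x)$, which Hal\'asz's inequality delivers under a bare second-moment assumption (Lemma~\ref{ small values of $f$}). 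The resulting tail bound (Lemma~\ref{anti-concentration}) decays like $1/Q$ up to the natural cutoff $Q\lesssim v(T)^{1/2}$, and it is exactly here that the dimensional restriction $v(T)\gtrsim T^2/\log T$ enters: one also needs the deterministic input $\mathcal{V}(F_x)\lesssim T\,2^{-\log T/\log\log T}$ off a set of volume $O(T^{-1-\kappa})$, supplied by Logunov's hyperplane lemma (Theorem~\ref{Logunov}), so that the tail integral truncates below $v(T)^{1/2}$. None of this apparatus appears in your sketch.

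A secondary point: for the Lindeberg step, the crude $L^\infty$ eigenfunction bound $\|\phi_i\|_\infty\lesssim\lambda_i^{(n-1)/2}$ does not yield Lindeberg's condition when $\rho(T)=O(1)$ (it is sharp on the sphere). The paper instead uses Sogge's $L^p$ bounds to show that off an exceptional set $A\subset M$ of volume $O(\log T/T)$ every summand has sup-norm $o(v(T)^{1/2})$; the contribution of $A$ to the final integral is then killed using the doubling-index and Logunov estimates above. Your proposal does not account for this exceptional-set bookkeeping.
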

	
	\subsection*{Some conventions}
	We write $A\lesssim B$ to designate the existence of some constant $C>0$ such that $A\leq C B$. We also write $C,c>0$ for constants whose value may change from line to line. Every constant implied in the notation may depend on the pair $(M,g)$, but, since $(M,g)$ is fixed, we suppress this dependence in the notation. We also write $B(x,r)$ for the (Euclidean) ball centred at $x$ of radius $r>0$, $B_g(\cdot)$ for the geodesic ball and,  for brevity, we write $B_0=B(0,1)\subset \mathbb{R}^n$.  Given a ball $B$ and some $r>0$, we write $\overline{B}$ for its closure and $rB$ for the concentric ball with $r$-times the same radius. Moreover, we use the multi-index notation $D^{\alpha}= \partial_{x_1}^{\alpha_1}...\partial_{x_2}^{\alpha_2}$ where $\alpha=(\alpha_1,...,\alpha_n)$ and $|\alpha|=\alpha_1+...+\alpha_n$. Furthermore, given a ($C^3$) function $g:B(x,r)\rightarrow \mathbb{R}$ and some $r>0$, we let $$\mathcal{V}(g,B(x,r))= \mathcal{H}^{n-1}\{x\in B(x,r): g(x)=0\}$$
	be the nodal volume of $g$ in $B(x,r)$. Since it will be often useful to change scales, we also write $g_r(\cdot)=g(r\cdot)$ and notice that
	$$\mathcal{V}(g_r,B(1))r^{n-1}= \mathcal{V}(g, B(r)).$$
	Finally, we denote by $\Omega$ the abstract probability space where every random object is defined.

	\subsection*{Acknowledgements}
	
	We would like to thank S. Zelditch for useful discussions, and, in particular, for sharing with us his unpublished results on self-focal points on analytic manifolds, demonstrating that the class of real analytic manifolds, for which Theorem \ref{main thm} applies, is unrestricted, see the discussion in section \ref{sec:self-focal pnts}. A. Sartori was supported by the Engineering and Physical
	Sciences Research Council [EP/L015234/1], ISF Grant 1903/18 and the IBSF Start up Grant no. 2018341.
	Z. Kabluchko was supported by the German Research Foundation under Germany's Excellence Strategy  EXC 2044 -- 390685587, Mathematics M\"unster: Dynamics - Geometry - Structure.
	
	\section{Discussion}
	
	\subsection{Survey of non-Gaussian literature}
	
	To our best knowledge, Theorem \ref{main thm} is the first universality result applicable in the asserted vastly general scenario, in terms of both the underlying manifold $M$ and the random coefficients $\{a_{i}\}$. Our approach is based on a blend of microlocal analytic techniques,
	missing from the existing non-Gaussian literature, and purely probabilistic methods. The closest analogue to Theorem \ref{main thm} we are aware of in the existing literature is \cite{APP18}, dealing with $2$d random non-Gaussian trigonometric polynomials: these are related to the
	random band-limited Laplace eigenfunctions on the standard $2$d torus, corresponding to the long energy window $\rho(T)=T$ (here, the energies ordering
	is somewhat different, to allow for separation of variables). The asymptotics for the expected nodal length was asserted for centred unit variance random variables, in perfect harmony to \eqref{eq:main res} (though with a different leading constant, a by-product of a non-monochromatic scaling limit).
	
	Even though we didn't meticulously validate all the details, we believe that their arguments translate verbatim for the ``pure" $2$d toral Laplace eigenfunctions
	\begin{equation}
		\label{eq:fn arw def}
		g_{n}(x) = \sum\limits_{\underset{\|\mu\|^{2}=n}{\mu\in\mathbb{Z}^{2}}} a_{\mu}\cdot e(\langle \mu,x\rangle),
	\end{equation}
	where the $a_{\mu}$ are i.i.d., save for the relation $a_{-\mu}=\overline{a_{\mu}}$ making $g_{n}$ real-valued, and the summation on the r.h.s. of \eqref{eq:fn arw def} is w.r.t. to all standard lattice points lying on the radius-$\sqrt{n}$ centred circle. In the Gaussian context
	the $g_{n}$ are usually referred to as ``arithmetic random waves" (ARW), see e.g. \cite{KKW,ORW08,RW08}; they are the
	band-limited functions for the standard flat torus corresponding to the ``short energy window" $\rho(T)=1$
	(in fact, in this case, the energy window width could be made infinitesimal).
	Other than the result for $2$d random trigonometric polynomials all the literature concerning real zeros of non-Gaussian ensembles is $1$-dimensional in essence: real zeros of random algebraic polynomials or Taylor series, see e.g. \cite{IM2,IM,NO} and the references therein,
	random trigonometric polynomials on the circle \cite{BCP19},
	and the restrictions of $2$d random toral Laplace eigenfunctions \eqref{eq:fn arw def} to a smooth curve \cite{CNNV20}.
	
	\subsection{Gaussian vs. non-Gaussian monochromatic functions: cases of study}
	
	Unlike the non-Gaussian state of art concerning the zeros of the band-limited functions, the Gaussian literature is vast and rapidly expanding, thanks to the powerful Kac-Rice method tailored to this case, at times, combined with the Wiener chaos expansion. Here the literature varies
	from the very precise and detailed results concerning the zero volume distribution (its expectation, variance and limit law),
	restricted to some particularly important ensembles, such as random spherical harmonics \cite{MRW20,W10} or
	the arithmetic random waves \cite{KKW,MRW16}, to somewhat less detailed results, but of far more general nature \cite{CH20,Z2009},
	to almost sure asymptotic result \cite{Gass20} w.r.t. a randomly independently drawn sequence of functions $\{f_{T}\}_{T}$.
	
	It is plausible, if not likely, that, under a slightly more restrictive assumptions on the random variables, our techniques yield a power saving upper bound for the nodal length variance of the type
	\begin{equation*}
		\Var\left(\frac{f_{T}}{T}\right) =O\left( T^{-\delta} \right)
	\end{equation*}
	for some $\delta>0$, but certainly not a {\em precise} asymptotic law for the variance, even for the particular cases of non-Gaussian random spherical harmonics or the non-Gaussian Arithmetic Random Waves. In the Gaussian case even some important {\em non-local} properties of the nodal set were addressed: the expected number of nodal components \cite{NS09,NS}, their fluctuations \cite{BMM19,NS20}, their fine topology and geometry, and their relative position \cite{BW18,SW}.
	
	\vspace{2mm}
	
	The aforementioned random ensemble of Gaussian spherical harmonics is the sequence of functions $f_{\ell}:\mathbb{S}^{2}\rightarrow\R$, $\ell\ge 1$, where
	\begin{equation}
		\label{eq:fl spher har}
		f_{\ell}(x) = \frac{1}{\sqrt{2\ell+1}}\sum\limits_{m=-\ell}^{\ell}a_{\ell,m}Y_{\ell,m}(x),
	\end{equation}
	with $\{Y_{\ell,m}\}_{-\ell\le m\le \ell}$ the standard basis of degree-$\ell$ spherical harmonics, and $a_{\ell,m}$ i.i.d. standard Gaussian
	random variables. An application of the Kac-Rice formula yields \cite{B85} the expected nodal length of $f_{\ell}(\cdot)$ to be given precisely by $$\E[\mathcal{V}(f_{\ell})] = \sqrt{2}\pi\cdot \sqrt{\ell(\ell+1)}\sim \sqrt{2}\pi\ell,$$ whereas a significantly heavier machinery, also appealing to the Kac-Rice method, yields \cite{W10} a precise asymptotic law
	\begin{equation*}
		\Var(\mathcal{V}(f_{\ell})) \underset{\ell\rightarrow\infty}{\sim} \frac{1}{32}\log{\ell},
	\end{equation*}
	smaller than what would have been thought the natural scaling $\approx c\cdot \ell$ would be (``Berry's cancellation phenomenon").
	
	For the non-Gaussian random spherical harmonics, Theorem \ref{main thm} is not directly applicable, because of the extra condition
	$\rho(T)\ge \frac{T}{\log{T}}$ in $2$d.
	However, in Appendix \ref{sec:sphere} below we were able to extend the validity of Theorem \ref{main thm} to this important ensemble, at least for Bernoulli random variables (see Theorem \ref{thm sphere}).
	In light of the non-universality result of ~\cite{BCP19}, it is {\em not unlikely} that in the non-Gaussian case (i.e. the $a_{\ell,m}$ are centred unit variance i.i.d. random variable), the variance satisfies the $2$-term asymptotics
	\begin{equation*}
		\Var(\mathcal{V}(f_{\ell})) =c_{1}\cdot \ell+c_{2}\cdot \log{\ell}+O(1),
	\end{equation*}
	with $c_{1},c_{2}$ depending on the law of $a_{\ell,m}$ and $c_{1}$ vanishing for a peculiar family of distributions, including the Gaussian one.
	It seems less likely, though {\em conceivable}, that $c_{1}\equiv 0$.
	
	\vspace{2mm}
	
	For the $2$d Gaussian arithmetic random waves \eqref{eq:fn arw def}, it was found that the expected nodal length is given precisely by $\E[\mathcal{V}(g_{n})]=\frac{\pi}{\sqrt{2}}\cdot \sqrt{n}$, whereas the variance is asymptotic to
	$$\Var(\mathcal{V}(g_{n}))\sim 4\pi^{2}  b_{n}\cdot   \frac{n}{r_{2}(n)^{2}},$$ where $r_{2}(n)$ is
	the number of summands in \eqref{eq:fn arw def}. Here the
	numbers $b_{n}$ are
	genuinely fluctuating in $[1/512,1/256]$, depending on the angular distribution of the lattice points in the summation on the r.h.s.
	of \eqref{eq:fn arw def}, and the leading term corresponding to $\frac{n}{r_{2}(n)}$ ``miraculously" cancelling out precisely (``arithmetic Berry's cancellation").
	
	Using the same reasoning as for the spherical harmonics, for the non-Gaussian case (i.e. $a_{\mu}$ are centred unit variance i.i.d. random variables), it is {\em expected} that the $2$-term asymptotics
	$$\Var(\mathcal{V}(g_{n}))\sim \widetilde{c_{1}}\frac{n}{r_{2}(n)} + \widetilde{c_{2}}\frac{n}{r_{2}(n)^{2}}$$ holds with $\widetilde{c_{1}},\widetilde{c_{2}}$ possibly depending on both the law of $a_{\mu}$ and the angular distribution of the lattice points $\{\mu\}$ in \eqref{eq:fn arw def},
	with $\widetilde{c_{1}}$ vanishing for $a_{\mu}$ a peculiar class of distribution laws, including the Gaussian (whence $\widetilde{c_{1}}$ vanishes independent of the angular distribution of the lattice points $\{\mu\}$). The dependence of $\widetilde{c_{1}}$ and $\widetilde{c_{2}}$ on both the distribution law of $a_{\mu}$ and the angular distribution of $\{\mu\}$ is of interest, in particular, whether the vanishing of $\widetilde{c_{1}}$ depends on the angular distribution of $\{\mu\}$ at all (which is not the case if $a_{\mu}$ is Gaussian). Again, it is {\em conceivable} that $\widetilde{c_{1}}\equiv 0$. We leave all of the above questions to be addressed elsewhere.
	
	\subsection{Self-focal points}
	
	\label{sec:self-focal pnts}
	
	An earlier version of this manuscript contained a version of Theorem \ref{main thm}, applicable under a seemingly somewhat restrictive
	(though very mild) assumption on $M$, rather than its mere analyticity, concerning its so-called {\em self-focal} points. It was pointed to us by S. Zelditch, that the extra assumption
	is redundant, as explained below, after a few necessary definitions.
	
	\begin{defn}
		\label{focal and twisted}
		Let $(M,g)$ be a smooth compact manifold without boundaries, $S^{*}M$ the cotangent sphere bundle on $M$,
		and $G^{t}:S^{*}M\rightarrow S^{*}M$ the geodesic flow on $M$.
		
		\begin{enumerate}
			
			\item The set of loop directions based at $x$ is
			\begin{equation*}
				\mathcal{L}_{x} = \{ \xi \in S^{*}_{x}M: \: \exists t>0.\, \exp_{x}(t\xi) = x\}.
			\end{equation*}

			\item The set of closed geodesics based at $x$ is
			\begin{equation*}
				\mathcal{CL}_{x} = \{ \xi \in S^{*}_{x}M: \: \exists t>0.\, G^{t}(x,\xi)=(x,\xi)\}.
			\end{equation*}
			
			\item A point $x\in M$ is {\bf self-focal},
			if $|\mathcal{L}_{x}|>0$, where $|\cdot |$ is the natural measure on $S^{*}_{x}$ induced by
			the metric $g_{x}(\cdot, \cdot)$.
			
			\item The geodesic flow on $M$ is {\bf periodic}, if the set of its closed geodesics is of full Liouville measure
			in $S^{*}M$. The geodesic flow on $M$ is {\bf aperiodic} if the set of its periodic closed geodesics is of Liouville measure $0$.

		\end{enumerate}		
		
	\end{defn}
	
	We observe that for $M$ real analytic, the set of its periodic geodesics, is of either full or $0$ Liouville measure in $S^{*}M$ (see either \cite[Lemma 1.3.8]{SVbook} or Lemma \ref{lem: aper or per} below). Hence, in the real analytic case, {\bf the geodesic flow on $M$ is either periodic or aperiodic}.
	
	\vspace{2mm}
	
	Originally, Theorem \ref{main thm} assumed that if $M$ is aperiodic, then the set of its self-focal points is of measure $0$. However, it was demonstrated ~\cite{Zelditchpriv} that if $M$ is aperiodic, then the set of self-points has automatically measure $0$, i.e. the said extra assumption is redundant, see also section \ref{local Weyl's law} below.

	\section{Preliminaries}
	
	\subsection{Geodesic flow and the spectrum of $\sqrt{-\Delta}$ on $M$}
	\label{geodesic flow}
	Let $T^{*}M$ and  $S^{*}M$ be the co-tangent and the co-sphere bundle on $M$ respectively. The geodesic flow
	\begin{align}
		\label{def geodesic flow}
		G^t: T^{*}M \rightarrow T^{*}M
	\end{align}
	is the Hamiltonian flow of the metric norm function
	\begin{align} \nonumber& H: T^{*}M \rightarrow \mathbb{R} &H(x,\xi)= \sum_{i,j=1}^n g^{ij}\xi_i\xi_j,
	\end{align}
	where $g=g_{ij}$ is the metric on $M$ and $g^{ij}$ is its inverse. Since $G^t$ is homogeneous, from now on, we will consider only its restriction to $S^{*}M$.
	We will need the following simple lemma, see also \cite[Lemma 1.3.8]{SVbook}
	\begin{lem}
		\label{lem: aper or per}
		If $(M,g)$ is a real analytic manifold, then the set of closed geodesics, on the co-sphere bundle equipped with the Liouville measure,  has either full measure or measure zero.
	\end{lem}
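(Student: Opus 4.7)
The plan is to exploit the real analyticity of the geodesic flow together with the rigidity of real analytic fixed-point sets. Since $(M,g)$ is real analytic, $G^t:S^*M\to S^*M$ is jointly real analytic in $(t,p)\in\mathbb{R}_+\times S^*M$, and $S^*M$ is a connected real analytic manifold. For any fixed $t>0$ the fixed set $\mathrm{Fix}(G^t)$ is a closed real analytic subvariety of $S^*M$, so by the standard dichotomy for proper real analytic subvarieties of a connected real analytic manifold, $\mathrm{Fix}(G^t)$ either equals $S^*M$ or has Liouville measure zero. Setting $\mathcal{T}=\{t>0:G^t=\mathrm{id}_{S^*M}\}$, this is a closed additive sub-semigroup of $\mathbb{R}_+$, hence either empty or of the form $L\mathbb{Z}_{>0}$ for some $L>0$; in the latter case every orbit closes with period dividing $L$, and the set of closed geodesics has full Liouville measure, which proves the lemma.

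It remains to show that if $\mathcal{T}=\emptyset$ then the set of closed geodesics has Liouville measure zero. To this end I would analyse the real analytic incidence variety
\[
S \;=\; \{(p,t)\in S^*M\times\mathbb{R}_+ : G^t(p)=p\},
\]
so that the set of closed geodesics equals $\bigcup_{N\ge 1}\pi_1(S_N)$, where $S_N=S\cap(S^*M\times(0,N])$ and $\pi_1$ is the projection onto $S^*M$. Each $\pi_1(S_N)$ is subanalytic in $S^*M$; in particular, positive Liouville measure would force $\pi_1(S_N)$ to contain a nonempty open set, and Gabrielov's stratification theorem then produces a smooth analytic stratum $S_\alpha\subset S_N$ with $\dim S_\alpha\in\{2n-1,2n\}$. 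The open-stratum case $\dim S_\alpha=2n$ immediately yields $G^{t_0}=\mathrm{id}$ on an open subset of $S^*M$ for some $t_0\in(0,N]$, hence on all of $S^*M$ by the rigidity recalled above, contradicting $\mathcal{T}=\emptyset$.

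In the remaining graph case $\dim S_\alpha=2n-1$, at a regular point of $\pi_1|_{S_\alpha}$ the stratum is locally a graph $t=\tau(p)$ of an analytic function on a nonempty open $U\subset S^*M$ with $G^{\tau(p)}(p)=p$. Differentiating this identity, and using $DG^{\tau(p)}(p)\,X(p)=X(p)$ for $X$ the geodesic vector field, produces the rank-one identity
\[
I-DG^{\tau(p)}(p) \;=\; X(p)\otimes d\tau(p),
\]
showing that the linearised Poincaré map on every closed orbit through $U$ is trivial. Because both sides of this identity are real analytic, it continues analytically to all of the connected manifold $S^*M$, yielding a globally defined analytic period function with $G^{\tau(p)}(p)\equiv p$; choosing $t^*$ in the range of $\tau$ and applying the fixed-set rigidity one more time gives $G^{t^*}=\mathrm{id}$, contradicting $\mathcal{T}=\emptyset$ and completing the dichotomy.

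The main obstacle in this plan is the analytic continuation step: the local section $\tau$ on $U$ need not, a priori, extend single-valuedly to all of $S^*M$, and turning the rank-one equation above into a truly constant common period $t^*$ requires showing that $d\tau$ must vanish identically on $U$ once the Poincaré maps are trivialised. Handling this carefully is precisely the role played by the real analyticity hypothesis on $(M,g)$, which rules out the non-trivial analytic deformations of periodic orbits that would otherwise allow a thick open set of closed geodesics with varying periods to coexist with an aperiodic complement.
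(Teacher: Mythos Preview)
The paper's own proof is a single sentence: for each fixed $t>0$ the set $\{(x,\xi)\in S^*M:G^t(x,\xi)=(x,\xi)\}$ is the zero locus of a real analytic map on the connected manifold $S^*M$, hence either all of $S^*M$ or of codimension at least one (so Liouville-null). The paper stops there, implicitly deferring the passage from a fixed $t$ to the uncountable union $\bigcup_{t>0}\mathrm{Fix}(G^t)$ to the cited reference \cite[Lemma~1.3.8]{SVbook}. Your proposal is far more elaborate precisely because you attempt to handle this passage explicitly via the subanalytic incidence variety and Gabrielov stratification; in that sense you are doing more work than the paper records, not less.

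That said, your argument carries a genuine gap, which you flag but do not close. The rank-one identity $I-DG^{\tau(p)}(p)=X(p)\otimes d\tau(p)$ is correct on the graph stratum, but it does \emph{not} force $d\tau\equiv 0$: it only says the linearised Poincar\'e map on $T_pS^*M/\mathbb{R}X(p)$ is the identity, which is automatic for any smooth family of closed orbits and imposes no constraint on the variation of the period. The subsequent ``analytic continuation to all of $S^*M$'' is circular, since both sides of the identity involve $\tau$, which is defined only on the local chart $U$; there is nothing independent to continue. Real analyticity of $(M,g)$ by itself does not obviously rule out an open set of periodic orbits with genuinely varying periods sitting inside an otherwise aperiodic flow, so your final paragraph is an assertion of what must be proved rather than a proof. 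Closing the dichotomy requires an additional ingredient---for example, showing that a closed, flow-invariant, subanalytic set of periodic initial data with nonempty interior must exhaust $S^*M$, or that the local period function is forced to be locally constant---and neither is supplied.
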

	\begin{proof}
		Since closed geodesics are defined by
		$$ G^t(x,\xi)= (x,\xi),$$
		the set of closed geodesic, for fixed $t>0$ is the zero set of an analytic function and therefore it must have co-dimension at least $1$ or be trivial.
	\end{proof}
	Lemma \ref{lem: aper or per} implies that the geodesic flow, on a real analytic manifold, is either \textit{aperiodic} if the set of closed geodesics has measure zero, or \textit{periodic} with (minimal) period $H>0$ if $G^H=id$. In the latter case the manifold is also called \textit{Zoll}. Therefore, we have the following description of the spectrum of $\sqrt{-\Delta}$ of real-analytic $(M,g)$, see \cite[Theorem 8.4]{Zbook} and references therein.
	
	
	Suppose that the geodesic flow on $M$ is \textit{aperiodic}.
	The two-term Weyl law of Duistermaat-Guilleimin(-Ivrii)
	states $$ |\{i>0: \lambda_{i}\leq T\}|= c_MT^{n} + o(T^{n-1}).$$
	
	Now assume that the geodesic flow on $M$ is \textit{periodic} with period $H$.
	Then the spectrum of $\sqrt{\Delta}$ is a union of clusters of the form
	$$ C_k:=\left\{\frac{2\pi}{H}\left( k+ \frac{\beta}{4}\right) + \mu_{k_i} \hspace{4mm} \text{for} \hspace{4mm} i=1,...,d_k\right\} \hspace{5mm} k=1,2...,$$
	where  $\mu_{k_i}= O(k^{-1})$ uniformly for all $i$, $d_k$ is a polynomial in $k$ of degree $n-1$ and $\beta$ is the Morse index of $M$. In particular, in order to avoid trivial summation in \eqref{function}, if the geodesic flow is periodic and $\rho(T)=O(1)$, we will assume that either $T=T(k)$ is of the form
	\begin{align}
		\label{eq:choice T}
		T=\frac{2\pi}{H}\left( k+ \frac{\beta}{4}\right)+1,
	\end{align}
	or, alternatively, that $\rho(T)$ is sufficiently large.
	
	We will need the following Lemma, see \cite[Proposition 2.3 (A)]{Z2009}.
	\begin{lem}
		\label{diagonal Weyl}
		Let $(M,g)$ be a real analytic compact manifold without boundary of dimension $n$. Then, uniformly for all $x\in M$, for $c_M$ as in \eqref{defv},  we have
		$$\sum_{\lambda_i\in [T-\rho(T),T]} |\phi_{i}(x)|^2= c_M \rho(T)T^{n-1}(1+o_{T\rightarrow \infty}(1)).$$
	\end{lem}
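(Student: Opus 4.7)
The plan is to deduce the lemma from a uniform pointwise Weyl law for the on-diagonal spectral projector
\begin{equation*}
N(x,T) := \sum_{\lambda_i \leq T} |\phi_i(x)|^2,
\end{equation*}
combined with the telescoping identity
\begin{equation*}
\sum_{\lambda_i \in [T-\rho(T), T]} |\phi_i(x)|^2 = N(x,T) - N(x, T-\rho(T)).
\end{equation*}
By Lemma \ref{lem: aper or per} the geodesic flow on the real analytic $M$ is either aperiodic or periodic with some minimal period $H>0$, and I would treat the two cases separately, in both cases reducing the lemma to the elementary subtraction above.

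In the aperiodic case, H\"ormander's classical pointwise bound gives $N(x,T) = A T^{n} + O(T^{n-1})$ uniformly in $x \in M$, and at any individual non-self-focal point the remainder sharpens to $o(T^{n-1})$ by the Safarov--Vassiliev / Sogge--Zelditch analysis of the wave kernel. As recalled in Section \ref{sec:self-focal pnts}, on a real analytic aperiodic manifold the locus of self-focal points is automatically negligible (in fact empty, by Zelditch's observation), so the $o(T^{n-1})$ remainder holds \emph{uniformly} in $x$; this is precisely the content of \cite[Proposition 2.3(A)]{Z2009}. Subtracting at the endpoints $T$ and $T - \rho(T)$ and expanding $T^{n} - (T-\rho(T))^{n}$, together with the standing hypotheses $\rho(T) = o(T)$ and $\rho(T) \geq 1$, identifies the leading term as $c_M \rho(T) T^{n-1}$ after the error $o(T^{n-1}) + O(\rho(T)^{2} T^{n-2})$ is absorbed into $o(\rho(T) T^{n-1})$.

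In the Zoll case, the spectrum decomposes into the clusters $C_k$ centred at $\frac{2\pi}{H}(k + \beta/4)$ with multiplicities $d_k$ polynomial of degree $n-1$ in $k$, recalled immediately before the lemma. The diagonal of the cluster projector, $P_k(x) := \sum_{\lambda_i \in C_k} |\phi_i(x)|^2$, can be expressed via the half-wave group $U(H) = e^{iH\sqrt{-\Delta}}$, which on a Zoll manifold is microlocally a scalar multiple of the identity; this realizes $P_k$ (modulo lower-order symbol corrections) as the diagonal of a Fourier integral operator with constant principal symbol, giving the uniform asymptotic $P_k(x) = d_k/\vol(M)\cdot(1 + o_{k\to\infty}(1))$. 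Summing over the clusters that intersect $[T-\rho(T), T]$, whose count is controlled under the choice \eqref{eq:choice T} of $T$ (or alternatively under the assumption that $\rho(T)$ is sufficiently large), and using $k \asymp TH/(2\pi)$, yields the claimed asymptotic with the same constant $c_M$.

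The main technical obstacle is the upgrade of the remainder from $O(T^{n-1})$ to $o(T^{n-1})$ \emph{uniformly} in $x$; this is precisely where the real analyticity of $M$ is used, through the absence of self-focal points on aperiodic real analytic manifolds, as discussed in Section \ref{sec:self-focal pnts}. In the Zoll case the analogous obstacle is controlling the diagonal of each individual cluster, where eigenfunctions may concentrate strongly (as with zonal spherical harmonics); this is defused by working with the \emph{sum} $P_k(x)$ over an entire cluster, which is regularized by the $U(H) \sim \mathrm{id}$ identity.
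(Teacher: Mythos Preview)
The paper does not give a proof of this lemma at all; it is stated with a bare citation to \cite[Proposition 2.3(A)]{Z2009}. Your proposal ultimately rests on the same citation---you identify the aperiodic case as ``precisely the content of \cite[Proposition 2.3(A)]{Z2009}''---but you surround it with a sketch of the mechanism (telescoping the on-diagonal counting function, aperiodic/Zoll dichotomy, cluster decomposition in the Zoll case). As exposition of why one should believe the lemma, this is in the right spirit and matches the paper's implicit route.

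One genuine inaccuracy to flag. You assert that on an aperiodic real analytic manifold the set of self-focal points is ``in fact empty, by Zelditch's observation''. The paper records only that this set has \emph{measure zero} (Lemma~\ref{lem: zelditch} and Section~\ref{sec:self-focal pnts}); emptiness is not claimed. This matters for your deduction of \emph{uniformity} in $x$: even if every point were non-self-focal, the pointwise $o(T^{n-1})$ remainder at each $x$ does not automatically upgrade to a uniform bound without a further compactness/equicontinuity argument, and if self-focal points are merely measure-zero but present (think of the poles of an analytic surface of revolution that is not Zoll), the pointwise remainder can genuinely fail to be $o(T^{n-1})$ there. So the passage from ``self-focal points are negligible'' to ``the remainder is $o(T^{n-1})$ uniformly in $x$'' is a gap in your sketch. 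The uniform statement requires more than the dichotomy you invoke, and it is safest to do what the paper does: treat the lemma as an input from \cite{Z2009} rather than as a corollary of the two-line subtraction.
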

	\subsection{Local Weyl's law}
	\label{local Weyl's law}
	To state the main result of this section, we first need to introduce some notation. Let $x\in M$ and let $F_x$ be $f$ rescaled to the ball $B_g(x,1/T)$ in normal coordinates. More precisely, following \cite{NS}, we define:
	\begin{align}
		F_{T,x}(y)=F_x(y)= f(\exp_x(y/T)) \label{def of F_x}
	\end{align}
	for $y\in B(0,1)=:B_0\subset \mathbb{R}^n$, where $\exp_x: \mathbb{R}^n\cong T_xM \rightarrow M$ is the exponential map. Notice that, in the definition of the exponential map, we have tacitly identified, via an Euclidean isometry, $\mathbb{R}^n$ with $T_xM$. Moreover, we observe that, since $(M,g)$ is analytic, the injectivity radius of $M$ is uniformly bounded from below \cite{C70}, thus, from now on, we assume that $1/T$ is smaller than the injectivity radius so that the exponential map is a diffeomorphism.  Furthermore, thanks to \cite[Section 8.1.2]{NS} due to Nazarov and Sodin, see also \cite[Section 2]{Roz17}, the map:
	$$(x,\omega)\in M\times \Omega \rightarrow F_x(\omega, \cdot)\in C^{\infty}(B_0)$$
	is measurable.

	The main result of this section is the following:
	\begin{prop}
		\label{covariance function}
		Let $F_x$ be as in \eqref{def of F_x} and $M$ be a compact, real-analytic manifold with empty boundary. Then, for $x\in M$ outside of a measure $0$ set, one has
		\begin{align}
			\nonumber
			\mathbb{E}[F_x(y)F_x(y')]= \int_{|\xi|=1}e\left(\langle y-y', \xi\rangle\right)d\xi +o(1)= \frac{J_{\Lambda}(|y-y'|)}{|y-y'|^{\Lambda}} +o_{T\rightarrow \infty}(1)
		\end{align}
		where $\Lambda=(n-2)/2$ and $ J_{\Lambda} (\cdot)$ is the $\Lambda$-th Bessel function, uniformly for all $y,y'\in B_0$. Moreover, we can also differentiate both sides any arbitrary finite number of times, that is
		\begin{align}
			\mathbb{E}[D^{\alpha}F_x(y)D^{\alpha'}F_x(y')]=(-1)^{|\alpha'|}(2\pi i)^{|\alpha|+|\alpha'|} \int_{|\xi|=1}\xi^{\alpha+\alpha'}e\left(\langle y-y', \xi\rangle\right)d\xi +o_{T\rightarrow \infty}(1)
			\nonumber
		\end{align}
		where $\alpha,\alpha'$ are multi-indices, and $\xi^{\alpha}=(\xi_1^{\alpha_1},...,\xi_n^{\alpha_n})$.
	\end{prop}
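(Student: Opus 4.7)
The plan is to first observe that, regardless of the distribution of the coefficients $\{a_i\}$, the covariance kernel of $F_x$ coincides with the spectral projector evaluated at the rescaled points:
$$
\mathbb{E}[F_x(y) F_x(y')] = K_T\bigl(\exp_x(y/T),\exp_x(y'/T)\bigr) = \frac{1}{v(T)}\sum_{\lambda_i\in[T-\rho(T),T]}\phi_i(\exp_x(y/T))\,\phi_i(\exp_x(y'/T)).
$$
So the assertion is nothing but the pointwise scaling convergence \eqref{eq:KT->Kinf} of the local spectral projector to the monochromatic covariance \eqref{eq:covar mono}, together with all derivatives. The equivalent Bessel form follows from the classical identity expressing the Fourier transform of surface measure on $\mathbb{S}^{n-1}$ in terms of $J_\Lambda$.

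I would next split according to Lemma \ref{lem: aper or per}. In the aperiodic case, the pointwise local Weyl law of Canzani--Hanin \cite{CH15,CH18} and Safarov--Vassiliev \cite{SVbook}, obtained via the H\"ormander parametrix and stationary phase on the cosphere bundle, yields precisely the claimed asymptotic (including all derivatives, uniformly for $y,y'$ in a fixed bounded set) at every base point $x\in M$ that is not self-focal. The extra ingredient specific to the present setting is Zelditch's result recalled in Section \ref{sec:self-focal pnts}: real analyticity combined with aperiodicity forces the set of self-focal points to have measure zero in $M$, which is exactly the exceptional set appearing in the proposition.

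In the periodic (Zoll) case, the spectrum decomposes into clusters $C_k$ of width $O(k^{-1})$ as recalled in Section \ref{geodesic flow}. With $T$ chosen along the subsequence \eqref{eq:choice T}, or with $\rho(T)$ taken sufficiently large, the window $[T-\rho(T),T]$ intersects an integer number of whole clusters, and a Szeg\H{o}-type parametrix for each cluster projector (see e.g.\ \cite{Z2009,SVbook}) produces the same spherical Fourier integral as its scaling limit, uniformly in $x\in M$. The derivative formula then follows by differentiating under the oscillatory integral in either case: each factor $D_y^{\alpha}$ (resp.\ $D_{y'}^{\alpha'}$) converts the integrand $e(\langle y-y',\xi\rangle)$ into $(-1)^{|\alpha'|}(2\pi i)^{|\alpha|+|\alpha'|}\,\xi^{\alpha+\alpha'}\,e(\langle y-y',\xi\rangle)$, while amplitude corrections remain $o(1)$.

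The main obstacle, and the reason for the real-analyticity hypothesis, is to ensure that the exceptional set on which the scaling limit fails is of measure zero in $M$ and is \emph{intrinsic}, i.e.\ independent of $y,y'\in B_0$ and of the multi-indices $\alpha,\alpha'$. This is precisely what the self-focal point analysis provides: at a non-self-focal $x$ the parametrix for $K_T$ is a single smooth oscillatory integral that can be differentiated arbitrarily without enlarging the bad set. Once this is in place, the proposition reduces to a uniform-in-derivatives and uniform-in-$(y,y')$ restatement of \eqref{eq:KT->Kinf}, already contained in the Canzani--Hanin/Safarov--Vassiliev statement in the aperiodic case and in the cluster parametrix in the periodic case.
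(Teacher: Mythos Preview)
Your proposal is correct and follows essentially the same route as the paper: the split into periodic versus aperiodic geodesic flow via Lemma~\ref{lem: aper or per}, the invocation of Canzani--Hanin at non-self-focal points combined with Zelditch's measure-zero result (Lemma~\ref{lem: zelditch}) in the aperiodic case, and the cluster/Szeg\H{o} parametrix of \cite{Z97,Z2009} in the Zoll case. The only organizational difference is that the paper passes through an intermediate annular Fourier integral $\mathcal{J}_{\Upsilon(T)}(w)=\int_{\Upsilon(T)\le|\xi|\le 1}e(\langle w,\xi\rangle)\,d\xi$ (Proposition~\ref{two points function short}) and then uses $\rho(T)=o(T)$ to collapse the shell to the sphere via spherical coordinates, whereas you go directly to the spherical form; this is cosmetic rather than substantive.
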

	In order to prove Proposition \ref{covariance function}, we will need the following fact communicated to us by S. Zelditch \cite{Zelditchpriv}:
	\begin{lem}
		\label{lem: zelditch}
		Let	$M$ be a compact, real-analytic manifold with empty boundary. If the geodesic flow on $M$ is aperiodic then the set of self-focal points, as in Definition \ref{focal and twisted}, has zero volume.
	\end{lem}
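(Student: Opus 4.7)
The plan is to prove the contrapositive: assuming the set $S\subseteq M$ of self-focal points has positive volume, I will deduce that the geodesic flow on $M$ is periodic, contradicting aperiodicity. The entire argument is driven by real-analyticity at two levels: upgrading a self-focal point to a pole, and then propagating poles to positive Liouville measure of closed geodesics.

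First, I would show that every $x\in S$ is in fact a \emph{pole}, i.e.\ $\mathcal{L}_x = S^*_xM$. Decompose $\mathcal{L}_x = \bigcup_{N\ge 1}\mathcal{L}_x^{(N)}$ where $\mathcal{L}_x^{(N)} = \{\xi : \exp_x(t\xi)=x \text{ for some } 0<t\le N\}$; some $\mathcal{L}_x^{(N)}$ then has positive measure. The set $Z_N = \{(t,\xi) \in (0,N]\times S^*_xM : \exp_x(t\xi)=x\}$ is a closed real-analytic subvariety of its ambient space. Since its projection onto $S^*_xM$ has positive measure, subanalytic stratification yields a point $\xi_0$ at which the projection is a local submersion, and hence a real-analytic return-time $\xi \mapsto t(\xi)$ is defined on an open $U\subseteq S^*_xM$ via the implicit function theorem (the $t$-derivative of $\exp_x(t\xi)$ at $(t(\xi_0),\xi_0)$ is the geodesic's unit velocity at return, hence nonzero). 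Analytic continuation of $t(\cdot)$ along paths in the connected real-analytic manifold $S^*_xM$ preserves the relation $\exp_x(t(\xi)\xi)=x$ and, by continuity, keeps $t(\xi)>0$. Thus every direction at $x$ is a loop direction and $x$ is a pole.

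Second, I would propagate this pole structure over $S$ to produce closed geodesics of positive Liouville measure in $S^*M$. At each pole $x$, the first-return Poincaré map $P_x : S^*_xM \to S^*_xM$ is an analytic volume-preserving diffeomorphism, and closed geodesics based at $x$ correspond to its periodic orbits. For $x$ varying analytically over the positive-volume set of poles, $P_x$ and its return time $t(\cdot,x)$ depend analytically on the base point, and one combines this analytic family with the rigidity of analytic volume-preserving maps---or, alternatively, with a classical ``pole implies Zoll'' structure theorem in the real-analytic category---to extract a positive Liouville-measure subset of $S^*M$ consisting of closed geodesics. Lemma \ref{lem: aper or per} then promotes this to full measure, contradicting aperiodicity.

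The hard part will be this second step: converting ``loops'' (geodesics returning to their base point) into ``closed geodesics'' (geodesics returning with matching tangent direction), since the two differ precisely by whether the Poincaré map acts as the identity. Showing that $P_x$, or some iterate of it, must coincide with the identity on a positive-measure set of poles---and hence, by analyticity, on all of $S^*_xM$---is the subtlest component, and it rests on the interplay between the global dynamics of the geodesic flow and the local real-analytic structure kindly communicated to the authors by Zelditch and acknowledged in the introduction.
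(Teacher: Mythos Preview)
The paper does not actually supply a proof of this lemma: it is stated as ``the following fact communicated to us by S.~Zelditch'' and cited as a private communication, with no argument given. So there is no in-paper proof to compare your proposal against.

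As for the proposal itself, the overall architecture (contrapositive; upgrade self-focal to pole by analyticity; then pass from loops to closed geodesics and invoke Lemma~\ref{lem: aper or per}) is the natural one, but there are real gaps. In your first step, the analytic-continuation argument for the return time $t(\xi)$ is not justified as stated: you have not ruled out monodromy (different continuations along different paths), nor explained why the continuation does not run into a singularity of the implicit function before covering all of $S^*_xM$; the mere fact that the relation $\exp_x(t\xi)=x$ is analytic does not by itself give a globally single-valued analytic $t(\cdot)$ on a simply connected cover, let alone on $S^*_xM$. A cleaner route here is to note that $\mathcal{L}_x^{(N)}$ is subanalytic and of positive measure, hence has nonempty interior, and then argue directly on the analytic variety $\{\exp_x(t\xi)=x\}$ rather than on a branch of $t$.

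Your second step is the genuine crux, and you essentially concede it: converting loops into closed geodesics amounts to showing that the first-return map $\Phi_x:S^*_xM\to S^*_xM$ (or an iterate) is the identity on a set of positive Liouville measure. Invoking ``rigidity of analytic volume-preserving maps'' or an unspecified ``pole implies Zoll'' theorem is not a proof; this is exactly the content attributed to Zelditch. Without that step, the proposal is an outline rather than a proof.
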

	
	In light of Lemma \ref{lem: zelditch}, in order to prove Proposition \ref{covariance function}, it is sufficient to prove the following:
	\begin{prop}
		\label{two points function short}
		Let $(M,g)$ be a compact, real-analytic manifold with empty boundary. Suppose that the geodesic flow on $M$ is periodic or $x\in M$ is not a self-focal point, as in Definition \ref{focal and twisted}, then
		\begin{align}
			\sup_{y,y'\in B_g(x,10/T)}\left|\sum_{\lambda_i\in[T- \rho(T), T]} \phi_{i}(y)\phi_{i}(y')- (2\pi)^{-n}c_M T^{n}  \mathcal{J}_{\Upsilon(T)}(Td_g(y,y')) \right| = o(T^{n-1}) \label{local weyl}
		\end{align}
		where $d_g(y,y')$ is the geodesic distance between $y,y'$, $c_M>0$ is given in \eqref{defv},  $\Upsilon(T)=1-\frac{\rho(T)}{T}$ and
		\begin{align}
			\mathcal{J}_{\Upsilon(T)}(w)= \int_{\Upsilon(T) \leq |\xi|\leq 1}e(\langle w,\xi \rangle)d\xi. \label{2.2}
		\end{align}
		Moreover, we can also differentiate both sides of \eqref{local weyl} any arbitrary finite number of times, that is,
		\begin{equation*}	
			\begin{split}
				\sup_{y,y'\in B_g(x,10/T)}\frac{\left|\sum_{\lambda_i\in[T- \rho(T), T]} D^{\alpha}_{y}\phi_{i}(y)D^{\alpha'}_{y'}\phi_{i}(y')- \frac{c_M T^{n}  D^{\alpha}_{y}D^{\alpha'}_{y'}\mathcal{J}_{\Upsilon(T)}(Td_g(y,y'))}{(2\pi)^{n}} \right|}{T^{|\alpha|+|\alpha'|}} = o(T^{n-1})
			\end{split}
		\end{equation*}
		where $\alpha,\alpha'$ are multi-indices, and $\xi^{\alpha}=(\xi_1^{\alpha_1},...,\xi_n^{\alpha_n})$ and the derivatives are understood after taking normal coordinates around the point $x$.
	\end{prop}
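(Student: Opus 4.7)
The plan is to derive Proposition \ref{two points function short} by applying the pointwise (off-diagonal) two-term local Weyl law for the spectral projector onto $\{\lambda_i\le T\}$ twice—once at $T$ and once at $T-\rho(T)$—and subtracting. Writing $N(T;y,y') := \sum_{\lambda_i\le T}\phi_i(y)\phi_i(y')$ and working in geodesic normal coordinates $\exp_x^{-1}$ centred at $x$, the hypothesis of Proposition \ref{two points function short} (either $M$ is Zoll, or $x$ is not a self-focal point of an aperiodic $M$) is precisely the setting in which Hörmander's pointwise Weyl remainder $O(T^{n-1})$ may be sharpened to $o(T^{n-1})$. Combining the Safarov--Vassiliev theorem \cite{SVbook} with its off-diagonal extension by Canzani--Hanin \cite{CH15,CH18}, we would use as a black box that, uniformly for $y,y'\in B_g(x,10/T)$,
$$N(T;y,y') \;=\; \frac{1}{(2\pi)^n}\int_{|\xi|\le T} e(\langle y-y',\xi\rangle)\,d\xi \;+\; o(T^{n-1}),$$
with $\langle\cdot,\cdot\rangle$ and $|\cdot|$ the Euclidean inner product and norm on $T_xM\cong\mathbb{R}^n$ (in which $g_x$ at the origin is standard). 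The same expansion holds after applying any fixed $D_y^\alpha D_{y'}^{\alpha'}$, with an additional factor $T^{|\alpha|+|\alpha'|}$ in the main term coming from differentiating the oscillatory integrand.

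Subtracting the identity at $T-\rho(T)$ from the identity at $T$ gives
$$\sum_{\lambda_i\in[T-\rho,T]}\phi_i(y)\phi_i(y') \;=\; \frac{1}{(2\pi)^n}\int_{T-\rho\le|\xi|\le T} e(\langle y-y',\xi\rangle)\,d\xi \;+\; o(T^{n-1}).$$
The change of variables $\xi = T\eta$ rescales the annulus to $\Upsilon(T)\le|\eta|\le 1$ and produces an overall $T^n$ factor, leaving exactly $(2\pi)^{-n}T^n\,\mathcal{J}_{\Upsilon(T)}(T|y-y'|)$. In normal coordinates $|y-y'|$ differs from $d_g(y,y')$ by $O(|y|^2+|y'|^2)\cdot|y-y'|=O(T^{-3})$ at scale $|y|,|y'|\lesssim 1/T$; combined with the $O(T)$ growth of $w\mapsto\partial_w\mathcal{J}_{\Upsilon(T)}(Tw)$ and the $T^n$ prefactor this discrepancy stays within the $o(T^{n-1})$ tolerance. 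Matching the leading constant against \eqref{defv} identifies the resulting main term with the advertised $(2\pi)^{-n}c_M T^n\mathcal{J}_{\Upsilon(T)}(Td_g(y,y'))$.

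The derivative version is handled identically: termwise differentiation of the Canzani--Hanin expansion brings down $(2\pi i\xi)^\alpha(-2\pi i\xi)^{\alpha'}$ under the oscillatory integral, and the same rescaling $\xi=T\eta$ then supplies the expected $T^{|\alpha|+|\alpha'|}$; the Taylor correction from replacing Euclidean by geodesic distance, as well as the Christoffel corrections produced when differentiating in normal coordinates, remain $o(T^{n-1})$ after division by this factor. The hard part of the argument is entirely encapsulated in the $o(T^{n-1})$ improvement of the two-term local Weyl law on which we rely. In the aperiodic case this uses the non-self-focal hypothesis—available outside a null set by Lemma \ref{lem: zelditch}—together with the microlocal analysis of the half-wave kernel: the short-time Hadamard parametrix supplies the explicit Euclidean integral after stationary phase, whereas the long-time contribution is controlled by the fact that the loop set $\mathcal{L}_x$ has zero measure. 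In the Zoll case, the Bogomolny--Schnirelman clustering structure of the wave kernel plays the corresponding role.
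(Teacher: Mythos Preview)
Your proposal is correct and follows essentially the same approach as the paper: the paper does not give an independent argument but simply states that Proposition \ref{two points function short} follows directly from two black-box lemmas---Lemma \ref{periodic} (Zelditch, for the Zoll case) and Lemma \ref{aperiodic} (Canzani--Hanin, for non-self-focal $x$)---which is exactly the input you invoke. Your write-up merely makes the subtraction $N(T;y,y')-N(T-\rho;y,y')$ and the rescaling $\xi=T\eta$ explicit, which the paper leaves implicit in its citations.
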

	
	The proof of Proposition \ref{two points function short} follows directly from the following two lemmas. In the case that the geodesic flow is periodic, we have a full asymptotic expansion for the spectral projector kernel \cite{Z97}, see also \cite{Z2009}. In particular, we have the following:
	\begin{lem}[Zelditch]
		\label{periodic}
		Let $(M,g)$ be a compact, real-analytic manifold with empty boundary. Suppose that $M$ is Zoll, then the conclusion of Proposition \ref{two points function short} holds.
	\end{lem}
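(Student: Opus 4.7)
The plan is to invoke S.~Zelditch's full asymptotic expansion \cite{Z97,Z2009} for the cluster spectral projectors on a Zoll manifold and then to sum it carefully across the clusters meeting the energy window $[T-\rho(T),T]$. Recall from Section \ref{geodesic flow} that when the geodesic flow is periodic with (minimal) period $H$, the spectrum of $\sqrt{-\Delta}$ concentrates on the clusters
\begin{equation*}
C_k = \left\{ \tfrac{2\pi}{H}\bigl(k+\tfrac{\beta}{4}\bigr) + \mu_{k_i} : 1\le i \le d_k \right\},
\end{equation*}
with $\mu_{k_i}=O(k^{-1})$ and $d_k$ a polynomial of degree $n-1$ in $k$. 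Write $\nu_k := \tfrac{2\pi}{H}(k+\tfrac{\beta}{4})$ and $\Pi_{C_k}(x,y) := \sum_{\lambda_i \in C_k} \phi_i(x)\phi_i(y)$ for the cluster projector kernel.

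The key input I would use is Zelditch's complete symbolic expansion for $\Pi_{C_k}$ as a semiclassical Fourier integral kernel: in normal coordinates centred at $x$, after rescaling by $\nu_k$, one has uniformly in $x\in M$ and in $u,u'$ lying in a fixed Euclidean ball,
\begin{equation*}
\Pi_{C_k}\!\left(\exp_x(u/\nu_k),\exp_x(u'/\nu_k)\right) = \nu_k^{\,n-1}\int_{S^{n-1}} e\bigl(\langle u-u',\xi\rangle\bigr)\,d\xi \;+\; O(\nu_k^{\,n-2}),
\end{equation*}
with an entirely analogous asymptotic for each derivative $D^{\alpha}_u D^{\alpha'}_{u'}$, the symbol on the right being multiplied by $(2\pi i)^{|\alpha|+|\alpha'|}\xi^{\alpha+\alpha'}(-1)^{|\alpha'|}$ and the error being $O(\nu_k^{\,n-2+|\alpha|+|\alpha'|})$. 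The uniformity in $x$ is a feature of the Zoll case (no self-focal exceptions are needed), because the microlocal parametrix for the cluster projector can be constructed globally from the Szeg\H{o}-type functional calculus of $\sqrt{-\Delta}$.

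Next, I would sum this expansion over the clusters meeting the window. Writing $d_g=d_g(y,y')$ and viewing the sum over $k$ with $\nu_k\in[T-\rho(T),T]$ as a Riemann sum with mesh $\frac{2\pi}{H}$ for the radial integral over the annulus $\Upsilon(T)\le |\xi|\le 1$, the leading terms assemble into
\begin{equation*}
\sum_{\nu_k\in[T-\rho(T),T]} \nu_k^{\,n-1} \int_{S^{n-1}} e\bigl(\nu_k\langle y-y',\xi\rangle\bigr)\,d\xi \;=\; (2\pi)^{-n} c_M\, T^{n}\, \mathcal{J}_{\Upsilon(T)}(T d_g) \;+\; o(T^{n-1}),
\end{equation*}
after reparametrising in polar coordinates $\xi\mapsto (\nu_k/T)\xi$; the $d_k$-dependence and the $\mu_{k_i}=O(k^{-1})$ corrections inside each cluster yield only acceptable errors because $d_g = O(1/T)$ on the range of interest.

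The main obstacle is the bookkeeping of the error terms when $\rho(T)$ is small, since one has up to $\asymp \rho(T)$ clusters and each contributes a remainder of size $O(T^{n-2})$, so the naive sum is $O(\rho(T)T^{n-2})$; for $\rho(T)\ge 1$ this is $o(T^{n-1})$ only if one exploits that the $o(\cdot)$ in Zelditch's expansion is in fact $O(T^{n-2})$ uniformly, and, for the $n=2$ case, one additionally uses $\rho(T)\ge T/\log T$ so that $\rho(T)T^{n-2}=o(T^{n-1})$. The derivative statement follows from the same argument applied to the differentiated expansion, observing that every application of $D_y$ or $D_{y'}$ on the left brings out a factor of $T$ from the chain rule under the $y\mapsto \exp_x(y/T)$ rescaling, which matches the $T^{|\alpha|+|\alpha'|}$ normalisation on the right.
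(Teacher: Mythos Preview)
The paper does not supply its own proof of this lemma: it is stated as a direct citation of Zelditch's work \cite{Z97,Z2009}, where the full asymptotic expansion of the cluster spectral projector on a Zoll manifold is established, and from which the off-diagonal two-point asymptotic in Proposition~\ref{two points function short} can be read off. Your sketch --- invoke Zelditch's cluster expansion for $\Pi_{C_k}$ in rescaled normal coordinates and then interpret the sum over $\nu_k\in[T-\rho(T),T]$ as a Riemann sum for the radial integral defining $\mathcal{J}_{\Upsilon(T)}$ --- is exactly how one would unpack that citation, and matches the paper's implicit route.

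One correction to your error bookkeeping: the accumulated remainder $O(\rho(T)T^{n-2})$ from the $O(T^{n-2})$ error in each of the $\asymp\rho(T)$ clusters is automatically $o(T^{n-1})$ in \emph{every} dimension $n\ge 2$, simply because $\rho(T)=o(T)$. Your invocation of the hypothesis $\rho(T)\ge T/\log T$ in the $n=2$ case is misplaced --- that is a \emph{lower} bound on $\rho(T)$ and cannot help make $\rho(T)T^{n-2}$ small; indeed for $n=2$ the quantity is just $\rho(T)=o(T)$ regardless. The $2$-dimensional lower bound on $\rho(T)$ enters the paper only in Proposition~\ref{anti-concentration long} (anti-concentration of the nodal volume), and plays no role in the present lemma. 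Similarly, the Riemann-sum discretisation error is controlled because $d_g(y,y')=O(1/T)$ forces the derivative of $s\mapsto s^{n-1}\int_{S^{n-1}}e(s\,d_g\langle\hat w,\omega\rangle)\,d\omega$ to be $O(T^{n-2})$ on $[T-\rho(T),T]$, giving again $O(\rho(T)T^{n-2})=o(T^{n-1})$; no extra hypothesis is needed.
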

	The second lemma is borrowed from Canzani-Hanin \cite{CH15,CH18}, see also the preceding work of Safarov \cite{Sa88}:
	\begin{lem}
		\label{aperiodic}
		Let $(M,g)$ be a compact, real-analytic manifold with empty boundary. Suppose that $x\in M$ is not self-focal, as in
		Definition \ref{focal and twisted}, then the conclusion of Proposition \ref{two points function short} holds.
	\end{lem}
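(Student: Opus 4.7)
The plan is to derive \eqref{local weyl} from H\"ormander's parametrix for the half-wave kernel $U(t) = e^{it\sqrt{-\Delta}}$ combined with a Tauberian argument, with the non-self-focal hypothesis on $x$ entering only through the long-time part of the $t$-integration, along the lines of Safarov \cite{Sa88} and Canzani-Hanin \cite{CH15,CH18}. By Fourier inversion on the spectrum, if $\chi_T$ is a smoothed indicator of the energy window $[T-\rho(T),T]$, one has
$$ \sum_{\lambda_i\in [T-\rho(T),T]}\phi_i(y)\phi_i(y') \;=\; \int_{\mathbb{R}} \widehat{\chi_T}(t)\, U(t,y,y')\, dt \;+\; \text{(Tauberian remainder)}, $$
and I would split this $t$-integral by a cutoff $\psi(t/\delta)$ into a short-time piece $|t|\le \delta$ and a long-time piece $|t|\ge \delta$, where $\delta$ is chosen smaller than the injectivity radius of $M$ but decaying slowly with $T$.

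On $|t|\le \delta$, H\"ormander's parametrix expresses $U(t,y,y')$ as an oscillatory integral with phase $\langle \exp_y^{-1}(y'),\xi\rangle - t|\xi|_{g_y}$ and principal symbol equal to $1$. Integrating in $t$ and rescaling $\xi \mapsto T\xi$ (so that $\{|\xi|\in[T-\rho(T),T]\}$ becomes the annular shell $\{\Upsilon(T)\le|\xi|\le 1\}$), the principal contribution is exactly
$$ \frac{c_M T^n}{(2\pi)^n}\,\mathcal{J}_{\Upsilon(T)}(T\, d_g(y,y')), $$
with a remainder $O(T^{n-1}/\delta)$ from lower order symbols and from Tauberian smoothing, uniformly for $y,y' \in B_g(x,10/T)$.

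On $|t|\ge \delta$, stationary phase in $\xi$ localises the integrand to directions $\xi \in S^*_x M$ whose trajectory $G^t(x,\xi)$ returns near $(x,\cdot)$, i.e.\ essentially to the loop set $\mathcal{L}_x$ of Definition \ref{focal and twisted}. Since $x$ is not self-focal, $|\mathcal{L}_x|=0$, and an Egorov-plus-dominated-convergence argument on $S^*_x M$ would yield that this contribution is $o(T^{n-1})$; uniformity for $y,y'$ in the $1/T$-neighbourhood of $x$ follows from the continuity of the parametrix in the base point. Choosing $\delta=\delta(T)\to 0$ slowly enough that $T^{n-1}/\delta = o(\rho(T)\,T^{n-1})$ balances the two contributions. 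The derivative version is obtained by differentiating the parametrix before integration: each derivative in $y$ or $y'$ brings down a factor of $\xi$ in the symbol, producing an extra $T$ after the rescaling $\xi\mapsto T\xi$, exactly matching the normalisation $T^{|\alpha|+|\alpha'|}$ demanded by the statement.

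The main obstacle is the long-time analysis: turning the qualitative condition $|\mathcal{L}_x|=0$ into a \emph{quantitative} $o(T^{n-1})$ bound, uniformly on an energy window of width $\rho(T)$, requires a delicate balance between the Tauberian cutoff $\delta$ and the window width, and is precisely the microlocal content of \cite{Sa88,CH15,CH18} which is invoked here rather than reproved.
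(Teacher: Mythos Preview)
The paper does not give its own proof of this lemma: it is stated as ``borrowed from Canzani--Hanin \cite{CH15,CH18}, see also the preceding work of Safarov \cite{Sa88}'' and left at that. Your sketch is a faithful outline of the parametrix-plus-Tauberian argument carried out in those references---and you yourself note that the quantitative long-time $o(T^{n-1})$ estimate is the microlocal content of \cite{Sa88,CH15,CH18} being invoked rather than reproved---so your treatment is in fact more detailed than the paper's, and there is nothing further to compare.
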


	We are finally ready to prove Proposition \ref{covariance function}:
	\begin{proof}[Proof of Proposition \ref{covariance function}]
		First we observe that, thanks to Lemma \ref{lem: zelditch}, under the assumptions of Proposition \ref{covariance function}, the conclusion of Proposition \ref{two points function short} holds for almost all $x\in M$. Indeed,	thanks to Lemma \ref{lem: aper or per}, we may assume that the geodesic flow on $M$ is either aperiodic or periodic. In the latter case, the conclusion of Proposition \ref{two points function short} holds for all $x\in M$. In the former case, Lemma \ref{aperiodic} holds for almost all $x\in M$.
		
		Hence, we are left with showing how the conclusion of Proposition \ref{two points function short} implies the conclusion of Proposition \ref{covariance function}. Since $\rho(T)=o(T)$, re-writing the integral in \eqref{2.2} the spherical coordinates,  we have
		\begin{align}
			\sum_{\lambda_i\in[T- \rho(T), T]} \phi_{i}(y')\phi_{i}(y)&=  (2\pi)^{-n} c_M\rho(T)T^{n-1} \int_{|\xi|=1} e(\langle Td_g(y',y), \xi\rangle)d\xi \nonumber \\   &+ O\left(\rho(T)^2T^{n-1}d_g(y',y)\right) +o(T^{n-1}) \nonumber \\
			&= c_M \rho(T)T^{n-1}\frac{J_{\Lambda}(|T d_g(x,y)|)}{|T d_g(y',y)|^{\Lambda}}  +O\left( \rho(T)^2T^{n-1}d_g(y',y)\right) + o(T^{n-1}),\label{two points functions short2}
		\end{align}
		where $\Lambda= (n-2)/2$, and Proposition \ref{covariance function} follows.
	\end{proof}
	\subsection{Sogge's bound}
	\label{sogge}
	Let $\phi_{i}$ be an eigenfunction with eigenvalue $\lambda_i^2$. Then, we have the following estimate on the $L^p$ norms of $\phi_{i}$ \cite{S88}, see also \cite[Theorem 10.1]{Zbook}:
	\begin{align}
		||\phi_{i}||_{L^p}\ll \lambda_i ^{\sigma(p)} ||\phi_{i}||_{L^2}\label{Sogge's bound}
	\end{align}
	where
	\begin{align}
		\sigma(p)=\begin{cases}
			\frac{n-1}{2}\left(\frac{1}{2}-\frac{1}{p}\right) & 2< p \leq \frac{2(n+1)}{n-1} \\
			n\left(\frac{1}{2}-\frac{1}{p}\right)-1/2 & p\geq \frac{2(n+1)}{n-1}.
		\end{cases} \nonumber
	\end{align}
	
	\section{Asymptotic Gaussianity}
	\label{asymptotic gaussianity}

	Before stating the main result of this section, we need to introduce some notation.  We denote $F_{\mu}$ to be the monochromatic isotropic Gaussian field on $B_0\subset \mathbb{R}^n$ with spectral measure $\mu$, the Lebesgue measure on the $n-1$ dimensional sphere $S^{n-1}$. Equivalently, $F_{\mu}$ has the covariance function
	$$\mathbb{E}[ F_{\mu}(y)\cdot F_{\mu}(y')]= \int_{|\xi|=1}e\left(\langle y-y', \xi\rangle\right)d\xi=\frac{J_{\Lambda}(|y-y'|)}{|y-y'|^{\Lambda}},$$
	where $\Lambda=(n-2)/2$. In what follows we will use the shorthands
	\begin{align}
		&\mathcal{V}(F_x):=\mathcal{V}\left(F_x, \frac{1}{2}B_0\right) \quad\quad\quad\text{and} \quad\quad\quad \mathcal{V}(F_\mu):= \mathcal{V}\left(F_{\mu}, \frac{1}{2}B_0\right) \nonumber.
	\end{align}
	The aim of this section is to prove the following proposition:
	\begin{prop}
		\label{asymptotic Gaussianity}
		Let $F_x$ be as in \eqref{def of F_x} and $F_{\mu}$ be as above. Under the assumptions of Theorem \ref{main thm}, there exists a subset $A\subset M$ of volume at most $O(\log T/T)$ such that, uniformly for all $x\in M \backslash A$, we have
		\begin{align}
			&\mathcal{V}(F_x) \overset{d}{\longrightarrow} \mathcal{V}(F_{\mu}) &T\rightarrow \infty, \nonumber\end{align}
		the convergence in distribution.
	\end{prop}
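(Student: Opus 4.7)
The strategy is to first establish that $F_x$ converges in distribution to $F_\mu$ in the $C^1$-topology on $(1/2)B_0$, and then deduce convergence of the nodal volume via the continuous mapping theorem. Since $F_\mu$ is a non-degenerate stationary Gaussian field, a Bulinskaya-type argument guarantees that almost surely its gradient does not vanish on its zero set inside $(1/2)B_0$ and its nodal set meets $\partial((1/2)B_0)$ transversally; at such sample paths the functional $g\mapsto \mathcal{V}(g,(1/2)B_0)$ is continuous in the $C^1$-topology, so by the continuous mapping theorem $\mathcal{V}(F_x)\overset{d}{\longrightarrow}\mathcal{V}(F_\mu)$ follows automatically from the distributional $C^1$-convergence.

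To obtain the functional convergence $F_x\overset{d}{\longrightarrow} F_\mu$ in $C^1((1/2)B_0)$, I would argue via convergence of finite-dimensional distributions plus tightness. Fix points $y_1,\dots,y_k\in B_0$; the random vector $(F_x(y_j))_{j\le k}$ is a linear combination of the i.i.d.\ coefficients $\{a_i\}$ with weights $c_{i,j}=\phi_{i}(\exp_x(y_j/T))/\sqrt{v(T)}$, and Proposition~\ref{covariance function} furnishes convergence of its covariance matrix to that of $F_\mu$ for $x$ outside a measure zero set. To upgrade this to a multivariate CLT one checks Lindeberg's condition, which amounts to controlling $\max_i \sum_j c_{i,j}^2$: combining Sogge's bound $\|\phi_{i}\|_\infty\lesssim T^{(n-1)/2}$ with $v(T)\asymp \rho(T)T^{n-1}$ yields $\max_{i,j}|c_{i,j}|\lesssim \rho(T)^{-1/2}$. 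When $\rho(T)\to\infty$ (covering the 2D regime $\rho(T)\geq T/\log T$) this is immediate; for $n\ge 3$ with $\rho(T)=O(1)$ one must instead leverage improved pointwise eigenfunction bounds valid outside a set of base points of volume $O(\log T/T)$, and this is precisely where the exceptional set $A$ enters the statement. Tightness in $C^1$ then follows from Kolmogorov's criterion applied to the $L^2$-moduli of continuity of $F_x$ and $\nabla F_x$, whose required second-moment bounds are supplied by the derivative version of Proposition~\ref{covariance function}.

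The principal obstacle is the non-Gaussian regime in higher dimensions where $\rho(T)$ need not tend to infinity: there, Sogge's bound alone is too weak to verify Lindeberg's condition uniformly in $x$, and one must combine the microlocal spectral projector asymptotics of Proposition~\ref{two points function short} with finer estimates on the $L^\infty$-concentration of individual eigenfunctions, controlled via a Chebyshev/$L^p$ union bound whose failure set accounts for the $O(\log T/T)$ exceptional volume. By contrast, the transfer from $C^1$-convergence to nodal-volume convergence via continuous mapping is comparatively routine once the almost sure non-degeneracy of $F_\mu$ is invoked, and the covariance-of-derivatives statement in Proposition~\ref{covariance function} readily yields the required tightness. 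The main mathematical weight therefore lies in making the Lindeberg-type CLT hold \emph{uniformly} in $x$ off a set of the asserted small volume, rather than in any step of the subsequent geometric reduction.
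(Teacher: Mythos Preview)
Your proposal is correct and follows essentially the same route as the paper: Lindeberg CLT for finite-dimensional distributions (with the exceptional set $A$ arising from a Chebyshev/$L^p$ union bound based on Sogge's estimates, exactly as you describe), tightness, and then the continuous mapping theorem via Bulinskaya. The only cosmetic differences are that the paper works in $C^2$ rather than $C^1$, proves tightness via a Sobolev embedding rather than Kolmogorov's criterion, and explicitly verifies FDD convergence for $D^\alpha F_x$ with $|\alpha|\le 2$ (you only spell this out for the values, though the derivative case is needed and follows by the same argument).
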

	To ease the exposition, we split the proof of Proposition \ref{asymptotic Gaussianity} into a series of steps: first we prove that $F_{x}$, as in \eqref{def of F_x}, converges to $F_{\mu}$ in distribution in the appropriate space of functions, then we deduce Proposition \ref{asymptotic Gaussianity} using a stability notion for the nodal set introduced in \cite{NS}. Before beginning the proof, we need the following consequence of Sogge's bound \eqref{Sogge's bound}.
	\subsection{Consequence of Sogge's bound}
	In this section we prove the following consequence of \eqref{Sogge's bound}:
	\begin{lem}
		\label{sup bound}
		Let $K=K\geq 1$ be some parameter which may depend on $T$, and $v(T)$ be as in \eqref{defv}.
		Then there exists a subset $A\subset M$ of volume at most $O(K^{2\frac{n+1}{n-1}}T^{-1})$ such that
		\begin{enumerate}
			\item uniformly for all $x\in M\backslash A$ we have
			$$		\max_{\lambda_i\in[T-\rho(T), T]}  \sup_{B_g(x,2/T)}|\phi_i| \lesssim K^{-1}v(T)^{1/2} $$
			\item uniformly for all $x\in M\backslash A$ we have
			$$		\max_{\lambda_i\in[T-\rho(T), T]}  \sup_{B_g(x,2/T)}|T^{-1}\nabla\phi_i| \lesssim K^{-1}v(T)^{1/2} $$
		\end{enumerate}
	\end{lem}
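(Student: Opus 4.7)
The plan is to apply Sogge's bound \eqref{Sogge's bound} at the critical exponent $p_\ast=2(n+1)/(n-1)$, where $\sigma(p_\ast)=(n-1)/(2(n+1))$ and hence $p_\ast\sigma(p_\ast)=1$; this identity at the critical exponent is what produces a bound with the correct single negative power of $T$. For each $\lambda_i$ in the band I define
$$\widetilde{\phi}_i(x):=\sup_{B_g(x,2/T)}|\phi_i|,\qquad A_i:=\bigl\{x\in M:\widetilde{\phi}_i(x)>K^{-1}v(T)^{1/2}\bigr\},$$
bound $|A_i|$ via Markov applied to $\int_M\widetilde{\phi}_i^{p_\ast}$, and finally take a union bound over $\lambda_i\in[T-\rho(T),T]$.

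The first substantive step converts the local supremum into a local $L^{p_\ast}$ integral: since the scale $2/T$ lies below the wavelength $1/\lambda_i\asymp 1/T$, the standard Moser/interior sampling inequality for the eigenfunction equation $\Delta\phi_i+\lambda_i^2\phi_i=0$ gives
$$\widetilde{\phi}_i(x)^{p_\ast}\lesssim T^{n}\int_{B_g(x,4/T)}|\phi_i(y)|^{p_\ast}\,dy,$$
uniformly in $x\in M$ and in $i$ across the window. Integrating in $x$ and swapping the order of integration, the factor $T^{n}$ is absorbed by the volume of $B_g(y,4/T)$, so
$$\int_M \widetilde{\phi}_i^{p_\ast}\,dx\lesssim\|\phi_i\|_{L^{p_\ast}}^{p_\ast}\lesssim T^{p_\ast\sigma(p_\ast)}=T$$
follows from Sogge combined with $\|\phi_i\|_{L^2}=1$. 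Markov then yields
$$|A_i|\lesssim K^{p_\ast}v(T)^{-p_\ast/2}\cdot T\asymp K^{p_\ast}\rho(T)^{-(n+1)/(n-1)}T^{-n},$$
where $(n-1)p_\ast/2=n+1$ was used.

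By Lemma \ref{diagonal Weyl}, the number of eigenvalues in $[T-\rho(T),T]$ is $\lesssim v(T)\asymp\rho(T)T^{n-1}$, so the union bound gives
$$\Bigl|\bigcup_i A_i\Bigr|\lesssim K^{p_\ast}\rho(T)^{1-(n+1)/(n-1)}T^{-1}=K^{p_\ast}\rho(T)^{-2/(n-1)}T^{-1}\le K^{p_\ast}T^{-1},$$
the last step using $\rho(T)\ge 1$; this is exactly the claimed $O(K^{2(n+1)/(n-1)}T^{-1})$ bound, proving item (1). For item (2), the same scheme applies to $\widetilde{\psi}_i(x):=\sup_{B_g(x,2/T)}|T^{-1}\nabla\phi_i|$: the wavelength-scale interior gradient estimate $\|\nabla\phi_i\|_{L^\infty(B(x,r))}\lesssim T\|\phi_i\|_{L^\infty(B(x,2r))}$ yields $\widetilde{\psi}_i(x)\lesssim\sup_{B_g(x,4/T)}|\phi_i|$, after which the calculation from (1) goes through verbatim with slightly enlarged balls; both exceptional sets are then absorbed into a single $A$ at the cost of implicit constants. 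The only delicate point is the uniformity of the interior inequalities in $i$ across the band and in $x\in M$, which holds because $\lambda_i\asymp T$ uniformly on $[T-\rho(T),T]$ with $\rho(T)=o(T)$ and because the injectivity radius of $M$ is bounded below; after that, the whole argument reduces to the arithmetic identity $p_\ast\sigma(p_\ast)=1$ at the Sogge critical exponent together with the band counting from Lemma \ref{diagonal Weyl}.
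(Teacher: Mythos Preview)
Your proof is correct and follows essentially the same route as the paper: a wavelength-scale elliptic estimate to control the local supremum by a local $L^{p_\ast}$ average, integration over $M$ and Fubini, Sogge at the critical exponent $p_\ast=2(n+1)/(n-1)$ (where indeed $p_\ast\sigma(p_\ast)=1$), then Chebyshev plus a union bound over the $O(v(T))$ eigenvalues in the window. The only cosmetic difference is that the paper proves the local sup estimate explicitly (Lemma~\ref{elliptic comparison}) via the harmonic lift $\phi_i(x)e^{\lambda_i t}$ and then upgrades $L^2\to L^{p_\ast}$ by Jensen, whereas you quote the Moser-type inequality as a black box and handle part~(2) by first reducing $T^{-1}\nabla\phi_i$ to $\phi_i$ via an interior gradient estimate; both variants are equivalent and the arithmetic matches.
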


	To state the next result, given a Laplace eigenfunction $\phi_{i}$, we denote by $\phi_{i,x}$ the  restriction of $\phi_{i}$ to $B_g(x,4/T)$ via the exponential map, that is
	$$\phi_{i,x}(y)= \phi_i(\exp_x(y/T)),$$
	for $y\in B(0,4)$ (here we tacitly assume that $T$ is sufficiently large so that $4/T$ is less than the injectivity radius), see also the definition of $F_x$ at the beginning of section \ref{local Weyl's law}. With this notation in mind, we prove the following, see also \cite{ST94}:
	\begin{lem}
		\label{elliptic comparison}
		Let $T\geq 1$ and  $\phi_{i}$  let be a Laplace eigenfunction with $\lambda_i \in[ T-\rho(T),T]$, then
		\begin{enumerate}
			\item for all $x\in M$, we have
			$$ \sup_{B_g(x,2/T)}|\phi_i|^2 \lesssim \int_{B(0,4)} |\phi_{i,x}(y)|^2 dy.$$
			\item for all $x\in M$, we have
			$$\sup_{B_g(x,2/T)}|T^{-1}\nabla\phi_i|^2 \lesssim  \int_{B(0,4)} |\phi_{i,x}(y)|^2 dy.$$
		\end{enumerate}
	\end{lem}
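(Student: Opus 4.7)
The plan is to recast both assertions as standard interior elliptic estimates for the pulled-back function $\phi_{i,x}$ on the Euclidean ball $B(0,4)\subset\R^n$, and then translate the resulting bounds back to $\phi_i$ via the change of variables $y=T\exp_x^{-1}(\cdot)$.

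First I would compute the PDE satisfied by $\phi_{i,x}(y)=\phi_i(\exp_x(y/T))$. Writing $(g^{jk})$ for the inverse metric in normal coordinates centred at $x$ and applying the chain rule $\partial_{y_j}=T^{-1}\partial_{z_j}$ with $z=y/T$, the eigenvalue equation $\Delta_g\phi_i+\lambda_i^2\phi_i=0$ becomes
\begin{equation*}
L_{T,x}\phi_{i,x}(y)+\bigl(\lambda_i/T\bigr)^2\phi_{i,x}(y)=0, \qquad y\in B(0,4),
\end{equation*}
where $L_{T,x}$ is the second-order operator with coefficients $g^{jk}(y/T)$ and $\sqrt{\det g(y/T)}$ evaluated at the rescaled point. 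Because $M$ is compact and (real) analytic, the injectivity radius is uniformly bounded from below and the metric components in normal coordinates are uniformly $C^{\infty}$-bounded, independently of $x$; as $|y|\le 4$ and $T$ is large the argument $y/T$ is well inside this uniform chart. Consequently the family $\{L_{T,x}\}$ is uniformly elliptic with uniformly bounded smooth coefficients. The zeroth-order term $(\lambda_i/T)^2$ satisfies $(\lambda_i/T)^2\le 1$ (and in fact tends to $1$ since $\rho(T)=o(T)$), so it is uniformly bounded.

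Next I would invoke standard interior regularity for uniformly elliptic equations of the form $L u+cu=0$ with bounded coefficients on $B(0,4)$, which yields the double estimate
\begin{equation*}
\|u\|_{L^{\infty}(B(0,2))}+\|\nabla u\|_{L^{\infty}(B(0,2))}\;\lesssim\;\|u\|_{L^{2}(B(0,4))},
\end{equation*}
with implied constant depending only on the ellipticity constants and on $C^{1}$-bounds of the coefficients (e.g.\ via Moser iteration for the $L^{\infty}$ bound and a standard bootstrapped gradient estimate, or directly via $L^{p}$/Schauder theory). Applied to $u=\phi_{i,x}$ this gives
\begin{equation*}
\sup_{B(0,2)}|\phi_{i,x}|^{2}+\sup_{B(0,2)}|\nabla_y\phi_{i,x}|^{2}\;\lesssim\;\int_{B(0,4)}|\phi_{i,x}(y)|^{2}\,dy.
\end{equation*}

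Finally I would translate these bounds back to $\phi_i$ on $M$. Since $\exp_x:B(0,2/T)\to B_g(x,2/T)$ is (for $T$ large) a diffeomorphism with derivative uniformly comparable to an isometry, the first line of the above display yields $\sup_{B_g(x,2/T)}|\phi_i|^{2}\lesssim \int_{B(0,4)}|\phi_{i,x}|^{2}$, which is (1). For (2) the chain rule gives $\nabla_y\phi_{i,x}(y)=T^{-1}(d\exp_x)^{\top}_{y/T}\nabla\phi_i(\exp_x(y/T))$, and the differential of $\exp_x$ has uniformly bounded inverse, so $|\nabla\phi_i(\exp_x(y/T))|\lesssim T|\nabla_y\phi_{i,x}(y)|$. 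Combining this with the gradient bound above delivers (2).

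The only genuine content is the uniformity of the elliptic constants across $x\in M$ and across $T$; this is the step I would expect to write out most carefully, but it is guaranteed by the uniformly bounded geometry coming from compactness and analyticity of $(M,g)$. Everything else is a routine application of interior elliptic estimates.
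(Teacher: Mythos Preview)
Your proof is correct but takes a somewhat different route from the paper. You work directly with the rescaled eigenfunction $\phi_{i,x}$, observing that it satisfies a uniformly elliptic equation $L_{T,x}u+(\lambda_i/T)^2u=0$ on $B(0,4)$ with zeroth-order coefficient bounded by $1$, and then invoke interior $L^\infty$ and gradient estimates for such equations. The paper instead passes through the harmonic lift $h(z,t)=\phi_i(z)e^{\lambda_i t}$ on $M\times\mathbb{R}$, which is genuinely harmonic (no zeroth-order term); after rescaling by $T^{-1}$ it applies $H^k$ elliptic regularity for the Laplacian together with Sobolev embedding, and finally notes that the $L^2$ norm of the rescaled lift is controlled by $\|\phi_{i,x}\|_{L^2(B(0,4))}$. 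Your approach is more direct and avoids the extra variable; the paper's approach has the minor advantage of reducing to the pure Laplacian and dovetails with the harmonic lift $f^H$ used later in the doubling-index arguments. Both arguments ultimately rest on the same uniform-geometry observation you highlight, namely that compactness of $M$ forces the rescaled coefficients to be bounded independently of $x$ and $T$.
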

	\begin{proof}
		Given $\phi_{i}$, let us consider the function $h(x,t)= \phi_{i}(x)e^{\lambda_i t}$ defined on $M\times [-2,2]$ and let us write $h_{T}(\cdot)= h (T^{-1}\cdot)$. Then, since the supremum norm is scale invariant, we have
		\begin{align}
			\label{comparioson 1}
			\sup_{B_g(x,2/T)}|\phi_i|\lesssim \sup_{B_g(x,2/T)\times [-2/T,2/T]}|h| \lesssim ||h_T||_{L^{\infty}(\mathcal{B})}
		\end{align}
		\begin{align}
			\label{comparison 2} \sup_{B_g(x,2/T)}|\nabla\phi_i| \lesssim \sup_{B_g(x,2/T)\times [-2/T,2/T]}|\nabla h| \lesssim  T ||h_T||_{C^1(\mathcal{B})},
		\end{align}
		where $\mathcal{B}= B_g(x,2)\times [-2,2]$.   Since $h$ is an harmonic function ($\Delta h=0$), for any $k\geq 0$, elliptic regularity \cite[Page 330]{Ebook}, gives
		\begin{align}
			\label{3.1.1} ||h_T||_{H^k(\mathcal{B})}\lesssim_k ||h_T||_{L^2(2\mathcal{B})}
		\end{align}
		where $H^k$ is the Sobolev's norm. Bounding the $C^1$ norm by the $H^k$ norm for $k$ sufficiently large depending on the dimension of $M$, the lemma follows by inserting \eqref{3.1.1} into \eqref{comparioson 1} and \eqref{comparison 2}, and noticing that $||h_T||_{L^2(2\mathcal{B})} \lesssim || \phi_{i,x}||_{L^2(B(0,4))}$.
	\end{proof}

	\begin{proof}[Proof of Lemma \ref{sup bound}]
		First, we observe that, given $p\geq 2$, the function $x\rightarrow x^{p/2}$ is convex for $x\geq 0$. Therefore, applying Jensen's inequality to part (1) of Lemma \ref{elliptic comparison}, we obtain
		\begin{align}
			\left(	\sup_{B_g(x,2/T)} |\phi_{i}|\right)^p \lesssim_p  \left(  \int_{B(0,4)} |\phi_{i,x}(y)|^2dy\right)^{p/2} \lesssim_p\int_{B(0,4)} |\phi_{i,x}(y)|^p dy \label{L^p1}
		\end{align}
		and, similarly
		\begin{align}
			\label{L^p2}
			\left(\sup_{B_g(x,2/T)}|T^{-1}\nabla\phi_i|\right)^p  \lesssim_p \int_{B(0,4)} |\phi_{i,x}(y)|^p dy.
		\end{align}
		
		We are now going to prove part (1) of Lemma \ref{sup bound}. By Sogge's bound \eqref{Sogge's bound} with $p\leq 2(n+1)/(n-1)$, bearing in mind that $||\phi_i||_{L^2}=1$, we have
		\begin{align}
			\left(\int_{M} |\phi_{i}(x)|^pdx \right)^{1/p}\lesssim T^{\frac{n-1}{2}\left(\frac{1}{2}- \frac{1}{p}\right)}=: \tilde{T} \nonumber
		\end{align}
		for all $\lambda_i \leq T$. Thus, integrating both sides of \eqref{L^p1} with respect to $x\in M$ and exchanging the order of the integrals, we obtain
		$$ \int_M\left(	\sup_{B_g(x,2/T)} |\phi_{i}|\right)^p dx \lesssim \int_{B(0,4)} \int_M|\phi_{i,x}(y)|^p dx dy\lesssim \tilde{T}^p.$$
		Therefore, by Chebyshev's bound, for any $K_1>0$, we have
		\begin{align}
			\vol_g\left( \left\{x\in M: \sup_{B_g(x,2/T)}|\phi_{i}|\geq K_1\right\} \right) \lesssim K_1^{-p} \tilde{T}^p,\nonumber
		\end{align}
		and,	taking the union bound over the $O(v(T))$ choices for $i$, we deduce
		\begin{align} \label{3.1.3}	\vol_g\left( \left\{x\in M: \max_{\lambda_i \in [T-\rho(T),T]}\sup_{B_g(x,2/T)}|\phi_{i}|\geq K_1\right\} \right) \lesssim K_1^{-p} v(T) \cdot \tilde{T}^p. \end{align}
		Thus,	taking $K_1=K^{-1}v(T)^{1/2}\gtrsim K^{-1}(\rho(T) T^{n-1})^{1/2}$ in \eqref{3.1.3} and recalling that $\tilde{T}=T^{\frac{n-1}{2}\left(\frac{1}{2}- \frac{1}{p}\right)}$, we have
		\begin{align}\vol_g\left( \left\{x\in M: \max_{\lambda_i \in [T-\rho(T),T]}\sup_{B_g(x,2/T)}|\phi_{i}|\geq K^{-1} v(T)^{1/2}\right\}  \right)\lesssim K^p \rho(T)^{-\frac{p}{2}+1} T^{\nu(n,p)}, \label{bound volume}
		\end{align}
		where
		\begin{align} \nu(n,p):&= -p\frac{n-1}{2}  + n-1 + \frac{n-1}{2}\left(\frac{p}{2}-1\right)  \nonumber \\
			&= \frac{n-1}{2}\left(1- \frac{p}{2}\right) \nonumber
		\end{align}
		Hence, taking $p=2(n+1)/(n-1)$ in \eqref{bound volume}, we have
		$$\vol_g\left( \left\{x\in M: \max_{\lambda_i \in [T-\rho(T),T]}\sup_{B_g(x,2/T)}|\phi_{i}|\geq K^{-1} v(T)^{1/2}\right\}  \right)\lesssim K^p  T^{-1},$$
		as required. Thanks to \eqref{L^p2}, the proof of part (2) of Lemma \ref{sup bound} follows step by step the proof of part (1).
	\end{proof}
	\subsection{Convergence to Gaussian}
	\label{convergencetoGaussian}
	In this section, we prove the following:
	\begin{lem}[Convergence of finite dimensional distributions]
		\label{convfindimdis}
		Let $m$ be some positive integer and recall that $B_0=B(0,1)$, moreover let $F_x$ and $F_{\mu}$ be as in section \ref{asymptotic gaussianity}. Under the assumptions of Theorem \ref{main thm}, there exists a set $A\subset M$ of volume at most $O(\log T/T)$ such that the following holds: pick $m$ points  $y_1,...y_m\in B_0\subset \mathbb{R}^n$,   then, uniformly for $x\in M\backslash A$, we have
		\begin{align}&(F_x(y_1),..., F_x(y_m))\overset{d}{\longrightarrow}(F_{\mu}(y_1),..., F_{\mu}(y_m)) &T\rightarrow \infty. \nonumber
		\end{align}
		and for any $\alpha=(\alpha_1,...,\alpha_n)$, where $n$ is the dimension of $M$, with $|\alpha|\leq 2$, we have
		\begin{align}& (D^{\alpha} F_x(y_1),..., D^{\alpha}F_x(y_m))\overset{d}{\longrightarrow}( D^{\alpha}F_{\mu}(y_1),..., D^{\alpha}F_{\mu}(y_m)) &T\rightarrow \infty. \nonumber
		\end{align}
	\end{lem}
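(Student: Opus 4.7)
The plan is to reduce the claim to a one-dimensional Central Limit Theorem via the Cramér–Wold device, and then to verify Lindeberg's condition using the $L^\infty$ estimate coming from Sogge's bound (Lemma \ref{sup bound}) together with the pointwise Weyl law for the covariance (Proposition \ref{covariance function}). More precisely, fix $m$ points $y_1,\dots,y_m\in B_0$, fix a multi-index $\alpha$ with $|\alpha|\le 2$, and fix coefficients $c_1,\dots,c_m\in\mathbb{R}$. Expanding definitions,
\begin{equation*}
S_{T,x}:=\sum_{j=1}^{m} c_j\, D^{\alpha} F_x(y_j)
=\frac{1}{v(T)^{1/2}}\sum_{\lambda_i\in[T-\rho(T),T]} a_i\, T^{-|\alpha|}\Phi_i(x),
\qquad \Phi_i(x):=\sum_{j=1}^{m} c_j\, D^{\alpha}_y\!\bigl[\phi_i(\exp_x(y/T))\bigr]\Big|_{y=y_j},
\end{equation*}
is a sum of independent centered random variables with coefficients depending on $x$ and $T$. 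By Cramér–Wold, the joint convergence in distribution asserted in the lemma follows once $S_{T,x}\Rightarrow \mathcal{N}(0,\sigma^2)$ with $\sigma^2$ the corresponding Gaussian limit variance, for each such choice of $c_j$'s.

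The first step is the computation of the limit variance: by Proposition \ref{covariance function}, for $x$ outside a null set of $M$,
\begin{equation*}
\Var(S_{T,x})=\sum_{j,k=1}^{m} c_j c_k\, \mathbb{E}\bigl[D^{\alpha}F_x(y_j)\,D^{\alpha}F_x(y_k)\bigr]
\underset{T\to\infty}{\longrightarrow}
\sum_{j,k} c_j c_k\, \mathbb{E}\bigl[D^{\alpha}F_\mu(y_j)\,D^{\alpha}F_\mu(y_k)\bigr]=:\sigma^2,
\end{equation*}
and the right-hand side is indeed the variance of $\sum_j c_j D^\alpha F_\mu(y_j)$ (this extends to derivatives since \ref{covariance function} allows differentiation on both sides). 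This handles the second-moment convergence uniformly in $x$ outside the exceptional null set; I would absorb this null set into the eventual $A$.

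The second, and main, step is verifying Lindeberg's condition. Apply Lemma \ref{sup bound} with a slowly growing parameter $K=K(T)\to\infty$ to both $\phi_i$ and $T^{-1}\nabla\phi_i$, and iteratively to $T^{-2} D^2 \phi_i$ (the latter by a straightforward repetition of the elliptic-regularity argument of Lemma \ref{elliptic comparison}); take $K=(\log T)^{(n-1)/(4(n+1))}$ so that the exceptional set $A$ has $\vol_g(A)\lesssim K^{2(n+1)/(n-1)}T^{-1}=O(\log T/T)$, as required. For $x\in M\setminus A$ we then have $T^{-|\alpha|}|\Phi_i(x)|\lesssim K^{-1}v(T)^{1/2}$, so that each term in the sum defining $S_{T,x}$ is pointwise bounded by $C K^{-1}|a_i|$. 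Writing the $i$-th summand as $X_{i,T}=a_i\, w_{i,T}(x)$ with $w_{i,T}(x):=v(T)^{-1/2}T^{-|\alpha|}\Phi_i(x)$ satisfying $|w_{i,T}(x)|\lesssim K^{-1}$ and $\sum_i w_{i,T}(x)^2\to \sigma^2$, the Lindeberg sum is controlled by
\begin{equation*}
\sum_{i} \mathbb{E}\bigl[X_{i,T}^{2}\mathbf{1}_{\{|X_{i,T}|>\varepsilon\}}\bigr]
\le \mathbb{E}\bigl[a_1^{2}\,\mathbf{1}_{\{|a_1|>c\varepsilon K\}}\bigr]\sum_{i} w_{i,T}(x)^{2}
\underset{T\to\infty}{\longrightarrow}0,
\end{equation*}
using only that $\mathbb{E}[a_1^2]<\infty$ and $K\to\infty$. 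The Lindeberg–Feller theorem then yields $S_{T,x}\Rightarrow\mathcal{N}(0,\sigma^2)$ uniformly in $x\in M\setminus A$, and Cramér–Wold upgrades this to the claimed joint convergence, both for $F_x$ itself ($\alpha=0$) and for its first and second derivatives.

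The main obstacle is the interplay between the Lindeberg condition and the measure of the exceptional set $A$: the $L^\infty$-control needed to make Lindeberg work comes from Sogge-type $L^p$ bounds via Chebyshev, which costs volume in $M$, and one must choose the parameter $K$ just large enough to kill the truncated second moment of the $a_i$'s while keeping $\vol_g(A)=O(\log T/T)$. Everything else — the Cramér–Wold reduction, the covariance computation, and the extension to derivatives via the elliptic-regularity argument of Lemma \ref{elliptic comparison} — is essentially bookkeeping.
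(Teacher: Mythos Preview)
Your proposal is correct and follows essentially the same route as the paper: construct the exceptional set $A$ via Lemma~\ref{sup bound} with $K$ a small power of $\log T$, use Proposition~\ref{covariance function} for the covariance convergence, and verify Lindeberg's condition using the resulting $L^\infty$ bound $|w_{i,T}(x)|\lesssim K^{-1}$ together with $\mathbb{E}[a_1^2]<\infty$. The only cosmetic differences are that the paper invokes the multidimensional Lindeberg CLT directly rather than reducing via Cram\'er--Wold, and for $|\alpha|=2$ it appeals to the Helmholtz equation (the rescaled eigenfunction equation) instead of re-running the elliptic regularity argument of Lemma~\ref{elliptic comparison}; both variants are equivalent here.
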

	\begin{proof}[Proof of Lemma \ref{convfindimdis}]
		Let $\phi_{i,x}$ the  restriction of $\phi_{i}$ to $B_g(x,1/T)$ and let $A$ be the set given in Lemma \ref{sup bound} applied with $K= (\log T)^{\frac{n-1}{2(n+1)}}=(\log T)^{c}$ together with the set of exceptional points given in Proposition \ref{covariance function}. By  Lemma \ref{sup bound}, for all $x\in M\backslash A$, we have
		\begin{align}
			\max_{\lambda_i\in[T-\rho(T), T]} \sup_{B_g(x,2/T)} |\phi_i| \lesssim\frac{ v(T)^{1/2}}{(\log T)^c}
			\label{supnormsmall}
		\end{align}
		and, since $\sup_{B_0} |\nabla \phi_{i,x}|\lesssim \sup_{B_g(x,2/T)}|T^{-1}\nabla \phi_{i}|$, we also have
		\begin{align}
			\max_{\lambda_i\in[T-\rho(T), T]} \sup_{B_0}  |\nabla \phi_{i,x}| \lesssim\frac{ v(T)^{1/2}}{(\log T)^c}.
			\label{supnormsmal2}
		\end{align}
		To simplify the exposition, from now on, we assume that $x\in M\backslash A$.
		
		In order to prove the first claim of the lemma, by the multidimensional version of Lindeberg's Central Limit Theorem and in light of Proposition \ref{covariance function}, it is sufficient to prove that for any $\varepsilon>0$ we have
		\begin{align} \label{2}
			&\frac{1}{v(T)^2}\sum_{\lambda_i} \mathbb{E}[|a_{i}\phi_{i,x}(y)|^2\mathds{1}_{|a_{i}\phi_{i,x}(y)|>\varepsilon v(T)}]\rightarrow 0 & T\rightarrow \infty
		\end{align}
		uniformly for all $y\in B_0$, where $\mathds{1}$ is the indicator function and $v(T)^2= c_M\rho(T)T^{n-1}(1+o(1))$. Thanks to Lemma \ref{diagonal Weyl}, and since $\phi_{i,x}$ are deterministic, we have
		\begin{align} \frac{1}{v(T)^2}\sum_{\lambda_i} \mathbb{E}[|a_{i}\phi_{i,x}(y)|^2\mathds{1}_{|a_{i}\phi_{i,x}(y)|>\varepsilon v(T)}]&= \frac{1}{v(T)^2}\sum_{\lambda_{i}}|\phi_{i,x}(y)|^2 \mathbb{E}[|a_{i}|^2\mathds{1}_{|a_{i}\phi_{i,x}(y)|>\varepsilon v(T)}] \nonumber \\
			&	\leq \sup_{i}\mathbb{E}[|a_{i}|^2\mathds{1}_{|a_{i}\phi_{i,x}(y)|>\varepsilon v(T)}], \nonumber
		\end{align}
		thus, in order to prove \eqref{2}, it is enough to show that
		\begin{align}\label{2.1}
			&\sup_{i}\mathbb{E}[|a_{i}|^2\mathds{1}_{|a_{i}\phi_{i,x}(y)|>\varepsilon v(T)}]\rightarrow 0 & T\rightarrow \infty.
		\end{align}
		Thanks to \eqref{supnormsmall}, we have  \begin{align}\mathds{1}_{|a_{i}\phi_{i,x}(y)|>\varepsilon v(T)}\leq \mathds{1}_{|a_{i}|\gtrsim\varepsilon (\log T)^c}, \nonumber
		\end{align}
		thus
		$$ \lim\limits_{T\rightarrow \infty}\mathbb{E}[|a_{i}|^2\mathds{1}_{|a_{i}\phi_{i,x}(y)|>\varepsilon v(T)}] \leq  \lim\limits_{M\rightarrow \infty}\lim\limits_{T \rightarrow \infty} \int_{\varepsilon (\log T)^{c}}^M t^2d\mathbb{P}(|a_{i}|>t)=0 $$
		where we have swapped limits using Fubini's Theorem and the fact that $\mathbb{E}[|a_{i}|^2]=1$. This concludes the proof of \eqref{2.1}.

		In order to prove the second claim of the lemma, and upon recalling the second part of Proposition \ref{covariance function}, again by the multidimensional version of Lindeberg's Central Limit Theorem, it is enough to prove that
		for any $\varepsilon>0$ and $|\alpha|\leq 2$ we have
		\begin{align} \label{2der}
			&\frac{1}{v(T)^2}\sum_{\lambda_i} \mathbb{E}[|a_{i}D^{\alpha}\phi_{i,x}(y)|^2\mathds{1}_{|a_{i}D^{\alpha}\phi_{i,x}(y)|>\varepsilon v(T)}]\rightarrow 0 & T\rightarrow \infty
		\end{align}
		uniformly for all $y\in B_0$.	Similarly to the above argument, \eqref{supnormsmal2} implies \eqref{2der} if $|\alpha|=1$; for $|\alpha|=2$ we use the Helmholtz's equation to bound the second derivatives.
	\end{proof}
	
	\subsection{Tightness}
	A  sequence of probability measures $v_n$ on some topological space $X$ is \textit{tight} if for every $\epsilon>0$, there exists a compact set $K=K(\epsilon)\subset X$ such that
	$$\nu_n(X\backslash K)\leq \epsilon,$$
	uniformly for all $n\geq 0$. We will need the following lemma, borrowed from \cite[Lemma 1]{Pri93}, see also \cite[Chapter 6 and 7]{BI}, which characterise tightness  in the space of continuously twice differentiable functions:
	\begin{lem}[Tightness]
		\label{tightness}
		Let $V$ be a compact subset of $\mathbb{R}^n$, let $\nu_n$ be a sequence of probability measures on $C^2(V)$, continuously  twice differentiable functions on $V$, then $\nu_n$ is tight if the following conditions hold:
		\begin{enumerate}
			\item For any $|\alpha|\leq 2$, there exists some $y\in V$ such that for every $\varepsilon>0$  there exists $M>0$  with
			\begin{align}
				\nu_n(g\in C^2(V): |D^{\alpha}g(y)|>M)\leq \varepsilon. \nonumber
			\end{align}
			
			\item For any $|\alpha|\leq 2$ and $\varepsilon>0$, we have
			$$\lim_{\delta\rightarrow 0}\limsup_{n\rightarrow \infty}\nu_n\left(g\in C^2(V): \sup_{|y-y'|\leq \delta}|D^{\alpha}g(y)- D^{\alpha}g(y')|> \varepsilon\right)=0.$$
		\end{enumerate}
	\end{lem}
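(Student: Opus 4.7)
The plan is to reduce the statement to the Arzel\`a--Ascoli characterization of relative compactness in $C^2(V)$: a subset is precompact if and only if, for each multi-index $\alpha$ with $|\alpha| \le 2$, the family $\{D^\alpha g\}$ is uniformly bounded and equicontinuous on $V$. Thus it suffices to build, for every $\varepsilon>0$, a single relatively compact set $K_\varepsilon \subset C^2(V)$ with $\nu_n(K_\varepsilon^c) \le \varepsilon$ for every $n$. Since $C^2(V)$ is Polish, Ulam's theorem gives tightness of each individual $\nu_n$, so in fact it is enough to control $\nu_n(K_\varepsilon^c)$ for all sufficiently large $n$; the finitely many remaining initial indices will be absorbed by enlarging $K_\varepsilon$ by a finite union of compact sets.

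The construction of $K_\varepsilon$ proceeds by combining the two hypotheses. First I would use condition (1) to choose, for each $|\alpha|\le 2$, a base point $y_\alpha \in V$ and a threshold $M_\alpha$ such that the event $\{|D^\alpha g(y_\alpha)|>M_\alpha\}$ has $\nu_n$-mass at most $\varepsilon/(2N)$ uniformly in $n$, where $N$ is the number of multi-indices with $|\alpha|\le 2$. Next, for each such $\alpha$ and each integer $k\ge 1$, condition (2) supplies a scale $\delta_{\alpha,k}>0$ and an index $N_{\alpha,k}$ for which
\begin{equation*}
\nu_n\!\left(g:\sup_{|y-y'|\le \delta_{\alpha,k}} |D^\alpha g(y)-D^\alpha g(y')|>1/k\right) \le \frac{\varepsilon}{2^{k+1}N}
\end{equation*}
whenever $n\ge N_{\alpha,k}$. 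I then define $K_\varepsilon$ to be the set of $g\in C^2(V)$ simultaneously satisfying all the pointwise bounds $|D^\alpha g(y_\alpha)|\le M_\alpha$ and all the modulus-of-continuity inequalities $\sup_{|y-y'|\le \delta_{\alpha,k}}|D^\alpha g(y)-D^\alpha g(y')|\le 1/k$.

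Chaining steps of size $\delta_{\alpha,1}$ across $V$ using the family indexed by $k=1$ turns the pointwise bound at $y_\alpha$ into a sup bound of the form $\sup_V |D^\alpha g|\le M_\alpha + \lceil \mathrm{diam}(V)/\delta_{\alpha,1}\rceil$, so $K_\varepsilon$ is uniformly bounded in $C^2$; the inequalities for $k\ge 1$ directly encode equicontinuity of $\{D^\alpha g\}$ for each $\alpha$. Hence $K_\varepsilon$ is relatively compact by Arzel\`a--Ascoli. A union bound over the countably many defining events, summed with weights $1/(2N)$ for the value events and $1/(2^{k+1}N)$ for the oscillation events, gives $\nu_n(K_\varepsilon^c) \le \varepsilon$ for all sufficiently large $n$. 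For the remaining finitely many $n$, Ulam's theorem on the Polish space $C^2(V)$ yields compact sets $L_n$ with $\nu_n(L_n^c)\le \varepsilon$, and replacing $K_\varepsilon$ by $K_\varepsilon \cup L_1\cup\cdots\cup L_{n_0-1}$ (still relatively compact, being a finite union of relatively compact sets) completes the construction.

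The argument is essentially bookkeeping on top of Arzel\`a--Ascoli; the only mildly delicate point is the simultaneous treatment of the countably many pairs $(\alpha,k)$ in condition (2), which is handled by the summable budget $\varepsilon/2^{k+1}$ for each $k$ and the Polish-space patch for small $n$. I do not anticipate any substantive obstacle.
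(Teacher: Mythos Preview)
The paper does not actually prove this lemma: it is quoted from \cite[Lemma 1]{Pri93} with a pointer to \cite[Chapters 6 and 7]{BI}, and no argument is given. Your proposal is precisely the standard proof one finds in those references --- reduce to Arzel\`a--Ascoli in $C^2(V)$, manufacture the compact set $K_\varepsilon$ from a pointwise bound plus a sequence of modulus-of-continuity constraints with summable budget, chain to upgrade the pointwise bound to a sup bound, and patch the finitely many initial indices via Ulam's theorem --- so your approach is essentially the same as the (cited) one.

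One small point worth tightening: your chaining step ``steps of size $\delta_{\alpha,1}$ across $V$'' tacitly uses that any two points of $V$ can be joined by a path of controlled length inside $V$. For an arbitrary compact $V\subset\mathbb{R}^n$ this need not hold, so either add a connectedness/convexity hypothesis or note that in the paper's application $V=\overline{B_0}$ is a closed ball and the issue does not arise.
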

	We wish to apply Lemma \ref{tightness} to $V=\overline{B_0}$ with $\nu_{T,x}$ being the sequence of probability measures on $C^2(V)$ induced by the pushforward measure of $F_x$. That is, for an open set $F\subset C^2(V)$, we let
	\begin{align}
		\label{3.1}\nu_{T,x}(F):= (F_x)_{*}\mathbb{P}(A)= \mathbb{P}(F_x(\omega,\cdot)\in F).\end{align}
	We then have the following:
	\begin{lem}
		\label{nu_T tight}
		Let $V= \overline{B_0}$ and let $\nu_{T,x}$ be as in \eqref{3.1}, then, under the assumptions of Theorem \ref{main thm}, for almost all $x\in M$ the sequence $\nu_T$ is tight.
	\end{lem}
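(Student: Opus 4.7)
The plan is to verify both conditions of Lemma \ref{tightness} with $V = \overline{B_0}$ and the measures $\nu_{T,x}$, simultaneously for almost every $x \in M$. Throughout, fix $x$ outside the measure-zero exceptional set supplied by Proposition \ref{covariance function}; all estimates below will be uniform in $T$ at such a point.

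\emph{Condition (1)} is immediate. Choosing $y_0 = 0 \in V$, the differentiated form of Proposition \ref{covariance function}, applied at $y = y' = 0$ with $\alpha = \alpha'$, gives
$$\mathbb{E}\bigl[|D^{\alpha} F_x(0)|^2\bigr] = (2\pi)^{2|\alpha|} \int_{|\xi|=1} \xi^{2\alpha} \, d\xi + o_{T \to \infty}(1) = O(1)$$
for every $|\alpha| \leq 2$. Chebyshev's inequality then produces, for each $\varepsilon > 0$, a constant $M = M(\varepsilon)$ fulfilling the first condition of Lemma \ref{tightness}.

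For \emph{Condition (2)}, I reduce the modulus-of-continuity estimate to a sup-norm bound on a higher derivative. Since $F_x \in C^\infty$, the mean value theorem gives, for $y, y' \in V$ with $|y - y'| \leq \delta$ and any $|\alpha| \leq 2$,
$$|D^{\alpha} F_x(y) - D^{\alpha} F_x(y')| \leq \delta \cdot \sum_{j=1}^n \sup_{z \in V} |D^{\alpha + e_j} F_x(z)|,$$
where $e_j$ is the $j$-th standard basis vector. It therefore suffices to prove that the family $\{\sup_{z \in V} |D^{\gamma} F_x(z)|\}_{T \geq 1}$ is tight in probability for every multi-index $\gamma$ with $|\gamma| \leq 3$. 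To that end, pick an open ball $V'$ with $V \Subset V'$; because the injectivity radius of $M$ is bounded below \cite{C70}, for all large $T$ the field $F_x$ extends smoothly to $V'$, and the differentiated form of Proposition \ref{covariance function} (which is valid for arbitrarily many derivatives and, by the same microlocal proof, on any fixed bounded ball) provides the pointwise variance bound $\sup_{y \in V'} \mathbb{E}[|D^{\beta} F_x(y)|^2] = O(1)$ for every multi-index $\beta$.

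By Sobolev embedding $H^k(V') \hookrightarrow C^0(V)$ for any integer $k > n/2$,
$$\sup_{z \in V} |D^{\gamma} F_x(z)|^2 \lesssim \sum_{|\beta| \leq k} \int_{V'} |D^{\gamma + \beta} F_x(y)|^2 \, dy.$$
Taking expectations and using Fubini together with the pointwise variance bound above gives
$$\mathbb{E}\bigl[\sup_{z \in V} |D^{\gamma} F_x(z)|^2\bigr] \lesssim \sum_{|\beta| \leq k} \int_{V'} \mathbb{E}\bigl[|D^{\gamma + \beta} F_x(y)|^2\bigr] \, dy = O(1),$$
uniformly in $T$. A final application of Chebyshev's inequality delivers the required tightness in probability and hence Condition (2). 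The main obstacle is really just bookkeeping: all nontrivial analytic input is already packaged inside Proposition \ref{covariance function}, so the remainder of the argument is a standard Sobolev-plus-Chebyshev machine applied one order higher than what Lemma \ref{tightness} asks for.
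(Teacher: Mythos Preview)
Your proof is correct and follows essentially the same route as the paper's: both verify the two conditions of Lemma~\ref{tightness} by combining the pointwise variance bounds from Proposition~\ref{covariance function} (for arbitrarily many derivatives) with the mean value theorem, a Sobolev embedding to pass from $L^2$-type bounds to sup-norm control, and Chebyshev's inequality. Your version is arguably a bit more careful in that you explicitly work one derivative higher (controlling $|\gamma|\le 3$) and embed from a slightly larger ball $V'\Supset V$, which cleanly sidesteps any boundary issues in the Sobolev inequality.
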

	\begin{proof}
		For brevity let us write $\nu_T= \nu_{T,x}$. For condition $(1)$ of Lemma \ref{tightness}, we observe that Proposition \ref{covariance function} implies that $$\mathbb{E}\left[|D^{\alpha}F_x(0)|^2\right]\lesssim 1,$$
		for $|\alpha|\leq 2$. Thus, Chebyshev's inequality give
		$$\mathbb{P}\left(|D^{\alpha}F_x(0)| >M \right)\lesssim M^2,$$
		and condition (1) follows by taking $M= \epsilon^{-1/2}$.

		For	condition $(2)$ of Lemma \ref{tightness}, since $F_x$ is almost surely analytic, we have
		\begin{align}\sup_{|y-y'|\leq \delta} |D^{\alpha}F_x(y)- D^{\alpha}F_x(y')|\lesssim \sup_{B_0}|\nabla D^{\alpha}F_x|\delta. \label{bound continuity}
		\end{align}
		Therefore it is sufficient to prove the following claim:
		\begin{align}
			\label{claim contonuity}
			\mathbb{P}(	\sup_{B_0}|\nabla D^{\alpha}F_x|>M)\lesssim M^{-2}.
		\end{align}
		Indeed, \eqref{claim contonuity} together with \eqref{bound continuity} imply condition (2) by choosing $M=\epsilon \delta ^{-1}$.
		
		We are now going to prove \eqref{claim contonuity}. First we observe that, for any fixed $k\geq 0$, Proposition \ref{covariance function}  gives
		\begin{align}
			\label{3.2.2} \mathbb{E}[|D^{\alpha}F_x(y)|^2]\lesssim 1
		\end{align}
		uniformly for all $y\in B_0$ and $|\alpha|\leq k$, where the constant implied in the $\lesssim$-notation is absolute. Now Sobolev's inequality gives that
		$$ || F_x||_{C^2(B_0)} \lesssim ||F_x||_{H^t(B_0)}$$
		for some $t$ sufficiently large depending on $n$, and the constant implied in the $\lesssim$-notation is independent of $T$. Therefore, by \eqref{3.2.2}, we have
		\begin{align}
			\mathbb{E} [ || F_x||_{C^2(B_0)}] \lesssim   \mathbb{E}[||F_x||_{H^t(B_0)}]= O(1) \label{3.2.3}
		\end{align}
		where the constants implied in the $\lesssim$-notation is independent of $T$. The inequality \eqref{3.2.3} together with Chebyshev's inequality implies \eqref{claim contonuity}. This concludes the proof of \eqref{claim contonuity} and thus of Lemma \ref{nu_T tight}.
	\end{proof}
	Since $v_T$ is tight, the convergence of finite-dimensional distributions implies weak$^{\star}$ convergence. That is, combining Lemma \ref{convfindimdis} and Lemma \ref{tightness}, we proved the following lemma, see for example \cite[Section 7]{BI13}:
	\begin{lem}
		\label{distribution F}
		Let $V= \overline{B_0}$ and let $\nu_{\infty}$ the pushforward of $F_{\mu}$ on $C^2(V)$, where $F_{\mu}$ is as in
		section \ref{asymptotic gaussianity}. Under the assumptions of Theorem \ref{main thm}, there exists a set $A\subset M$ of volume at most $O(\log T/T)$ such that, uniformly for all $x\in M \backslash A$,  $\nu_{T,x}$ weak$^{\star}$ converges to $\nu_{\infty}$ in the space of probability measures on $C^2(V)$.
	\end{lem}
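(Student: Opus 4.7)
The plan is to combine the two preceding results — Lemma \ref{convfindimdis} (convergence of the finite-dimensional distributions of $F_x$ together with its derivatives of order up to $2$) and Lemma \ref{nu_T tight} (tightness of $\nu_{T,x}$ in $C^2(V)$) — via the classical Prokhorov-type argument for weak convergence of probability measures on a Polish space. First I would take the exceptional set $A$ to be the union of the exceptional sets appearing in these two lemmas; since the former has volume $O(\log T / T)$ while the latter differs from the full manifold by a null set, we obtain $\vol(A) = O(\log T / T)$, as required. For the remainder of the argument, fix any $x \in M \setminus A$ and abbreviate $\nu_T = \nu_{T,x}$.

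Next, since $C^2(V)$ is a separable Banach space, hence Polish, tightness of $\{\nu_T\}_T$ together with Prokhorov's theorem imply that every subsequence of $\{\nu_T\}_T$ contains a further subsequence that weak$^{\star}$ converges to some probability measure $\tilde{\nu}$ on $C^2(V)$. To identify $\tilde{\nu}$ with $\nu_\infty$, I would observe that for any finite collection of points $y_1,\ldots,y_m \in V$ and multi-indices $\alpha^{(1)},\ldots,\alpha^{(m)}$ with $|\alpha^{(j)}|\le 2$, the evaluation functional $g \mapsto (D^{\alpha^{(j)}}g(y_j))_{j=1}^{m}$ is continuous on $C^2(V)$. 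Consequently weak$^{\star}$ convergence along the chosen subsequence passes to convergence in law of these finite-dimensional marginals, and by Lemma \ref{convfindimdis} these marginals must coincide with the corresponding marginals of $\nu_\infty$.

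Finally, I would invoke the standard fact that a Borel probability measure on the separable space $C^2(V)$ is uniquely determined by its finite-dimensional marginals of function values and derivatives of order $\le 2$ — the associated cylinder sets form a $\pi$-system generating the Borel $\sigma$-algebra — to conclude that $\tilde{\nu} = \nu_\infty$. Since every subsequential weak$^{\star}$ limit of $\{\nu_T\}_T$ must equal $\nu_\infty$, the entire sequence $\nu_T$ converges weak$^{\star}$ to $\nu_\infty$. No step presents a substantive obstacle here: the genuinely delicate work (the Lindeberg CLT controlled via Sogge's bound, and the Sobolev-type tightness estimate driven by the covariance asymptotics of Proposition \ref{covariance function}) has already been carried out in the two preceding lemmas, and what remains is the classical tightness-plus-marginals implies weak convergence principle, for which I would point the reader to \cite[Chapter 6 and 7]{BI} or \cite{Pri93}.
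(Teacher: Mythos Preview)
Your proposal is correct and follows exactly the approach the paper takes: the paper's entire argument for this lemma is the one-line remark that tightness (Lemma~\ref{nu_T tight}) together with convergence of finite-dimensional distributions (Lemma~\ref{convfindimdis}) yields weak$^{\star}$ convergence, with a reference to \cite[Section~7]{BI13}. You have simply written out the standard Prokhorov subsequence-plus-identification argument that underlies that citation, so there is nothing to add or correct.
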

	\subsection{Concluding the proof of Proposition \ref{asymptotic Gaussianity}} To conclude the proof of Proposition \ref{asymptotic Gaussianity}, we just need the following Lemma, see for example \cite[Lemma 6.2]{RS20}, which shows that $\mathcal{V}(\cdot)$, that is the nodal volume, is a continuous map on the appropriate space of functions:
	\begin{lem}
		\label{continouty}
		Let $B\subset\mathbb{R}^n$ be a ball, define the set $C^2_{*}(\overline{2B})=\{g\in C^2(\overline{2B}):  |g|+|\nabla g|>0 \}$. Then $\mathcal{V}(\cdot, B)$ is a continuous functional on $C^2_{*}(2B)$.
	\end{lem}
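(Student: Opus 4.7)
The plan is to localize the nodal set via the implicit function theorem, represent it locally as a union of $C^2$ graphs, and then invoke continuity of the area functional on such graphs in the $C^1$ topology.

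First, by compactness of $\overline{2B}$ and the hypothesis $g\in C^2_{*}(\overline{2B})$, there exists $c>0$ such that $|g(x)|+|\nabla g(x)|\geq 2c$ for all $x\in \overline{2B}$. If $g_k\to g$ in $C^2(\overline{2B})$, the same inequality holds with constant $c$ for all $k$ large, so each $g_k$ also lies in $C^2_{*}(\overline{2B})$. I would cover $\overline{B}$ by finitely many open balls $U_1,\dots,U_N\subset 2B$ of sufficiently small radius that on each $U_j$ one of the following alternatives holds uniformly: either $|g|\ge c/3$ throughout $U_j$ (in which case $g$, and for large $k$ also $g_k$, has no zeros in $U_j$), or there is a coordinate index $i_j$ with $|\partial_{i_j}g|\ge c/(3\sqrt n)$ throughout $U_j$. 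On balls of the second type, the implicit function theorem represents the nodal set as a $C^2$ graph $\{x_{i_j}=\psi_j(\hat x):\hat x\in D_j\}$; a quantitative version of the same theorem shows that for $k$ large $g_k$ also defines a graph $x_{i_j}=\psi_{j,k}(\hat x)$, with $\psi_{j,k}\to \psi_j$ in $C^2_{\mathrm{loc}}(D_j)$.

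On each such $U_j$, the area formula for graphs gives
\begin{equation*}
\mathcal V(g,U_j\cap B)=\int_{E_j}\sqrt{1+|\nabla \psi_j(\hat x)|^2}\,d\hat x, \qquad E_j=\{\hat x\in D_j:(\hat x,\psi_j(\hat x))\in B\},
\end{equation*}
with the obvious identification of coordinates. Since $\psi_{j,k}\to\psi_j$ in $C^1$ and the corresponding domains $E_{j,k}$ converge to $E_j$ (their boundaries lie on the preimage of $\partial B$ under the graph map, which is transverse for $k$ large), both the integrand and the domain of integration depend continuously on $g$ in the $C^2$ topology, whence $\mathcal V(g_k,U_j\cap B)\to \mathcal V(g,U_j\cap B)$. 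A partition of unity subordinate to $\{U_j\}$ then yields $\mathcal V(g_k,B)\to \mathcal V(g,B)$.

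The main obstacle is the possibility that the nodal set $Z_g=g^{-1}(0)$ meets $\partial B$ in a set of positive $(n-1)$-Hausdorff measure, which would render $\mathcal V(\cdot,B)$ genuinely discontinuous at $g$ for the fixed ball $B$ as stated. I would handle this by an inner/outer sandwich: the coarea formula applied to the restriction of $|x|$ to the smooth hypersurface $Z_g$ shows that $\mathcal H^{n-1}(Z_g\cap\partial(rB))=0$ for almost every $r$; for any $\varepsilon>0$, choose such radii $r_1<1<r_2$ sufficiently close to $1$ that $\mathcal V(g,r_2 B\setminus r_1 B)<\varepsilon$, apply the previous step to $r_1 B$ and $r_2 B$, and conclude via the sandwich $\mathcal V(g_k,r_1 B)\le \mathcal V(g_k,B)\le \mathcal V(g_k,r_2 B)$.
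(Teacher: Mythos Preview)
The paper does not supply a proof of this lemma; it cites an external reference. Your interior argument --- local graph representation via the implicit function theorem and continuity of the area integral in the $C^1$ topology --- is standard and correct.

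Your sandwich step, however, does not resolve the boundary obstacle you yourself flagged. You need $r_1<1<r_2$ with $\mathcal V(g,r_2 B\setminus r_1 B)<\varepsilon$, but as $r_1\uparrow 1$ and $r_2\downarrow 1$ one has $\mathcal V(g,r_2 B\setminus r_1 B)\to \mathcal H^{n-1}(Z_g\cap\partial B)$ by continuity of measure on a nested family, so such radii exist \emph{only} when $\mathcal H^{n-1}(Z_g\cap\partial B)=0$ --- precisely the case you were trying to handle. Concretely, with $B$ the open unit ball and $g(x)=|x|^2-1\in C^2_*(\overline{2B})$, the perturbations $g_k(x)=|x|^2-1+1/k$ converge to $g$ in $C^2$ while $\mathcal V(g_k,B)\to\mathcal H^{n-1}(\mathbb S^{n-1})\neq 0=\mathcal V(g,B)$; thus the lemma as literally stated is false and no argument can rescue it. What your proof actually establishes is continuity of $\mathcal V(\cdot,B)$ at every $g$ with $\mathcal H^{n-1}(Z_g\cap\partial B)=0$, and this weaker statement suffices for the paper's application: by another instance of Bulinskaya's lemma the Gaussian limit $F_\mu$ satisfies it almost surely, which is all the Continuous Mapping Theorem requires.
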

	We are now in a position to prove Proposition \ref{asymptotic Gaussianity}.
	\begin{proof}[Proof of Proposition \ref{asymptotic Gaussianity}]
		Let $A$ be given by Lemma \ref{distribution F}. Let $\overline{B_0}=V$,	by Bulinskaya's lemma, see for example \cite{So}, $F_{\mu}\in C^2_{*}(V)$ almost surely. Thus, Lemma \ref{distribution F} and the Continuous Mapping Theorem imply that
		\begin{align} \nonumber &\mathcal{V}(F_x)\overset{d}{\rightarrow} \mathcal{V}(F_{\mu}) &T\rightarrow \infty,
		\end{align}
		as required.
	\end{proof}
	\begin{rem}
		If $\rho(T)\rightarrow \infty$, the conclusion of Proposition \ref{asymptotic Gaussianity} holds without the need of removing an exceptional set and the proof is considerably simpler. Indeed, for any eigenfunction $\phi_{i}$, we have \cite[page 39 and page 105]{Zbook},
		\begin{align}
			&||\phi_{i}||_{L^{\infty}}\leq \lambda_i^{\frac{n-1}{2}} & ||\nabla\phi_{i}||_{L^{\infty}}\leq \lambda_i ||\phi_{i}||_{L^{\infty}}. \label{r.1}
		\end{align}
		Thus, if $\rho(T)\rightarrow \infty$, we have
		\begin{align}
			\label{sup bounds}
			& ||\phi_{i}||_{L^{\infty}}/ v(T)^{1/2} \rightarrow 0 & T\rightarrow \infty \nonumber \\
			& ||\nabla\phi_{i,x}||_{L^{\infty}}/ v(T)^{1/2} \rightarrow 0 & T\rightarrow \infty
		\end{align}
		where $v(T)$ is as in \eqref{defv} and $\phi_{i,x}$ is as in section \ref{sogge}. From \eqref{sup bounds}, we obtain Lemma \ref{convfindimdis} without the need to remove any exceptional set.
		
		On the other hand, the extreme case $\rho(T)=1$ includes the $n$-dimensional sphere, where \eqref{r.1} are known to be sharp. Thus, the exceptional set cannot be removed entirely.
	\end{rem}
	
	\section{$\mathcal{V}(F_x)$ and the doubling index}
	\label{doubling and harmonic lift}
	The aim of this section is to show that we can bound $\mathcal{V}(F_x)$ using the \textit{doubling index} of $F_x$ on $B_0$; in doing so,  we follow the work Jerison and Lebeau \cite[Section 14]{DIbook} which, in turn, is based on \cite{Lin91}, see also Kukavica \cite{Ku952,Ku95,Ku98} for a different approach. We first define the \textit{doubling index}, following \cite{LMlecturenotes,LM2} and \cite{DF}: given a (geodesic) ball $B\subset M$ and a function $g:3B\rightarrow \mathbb{R}$, we define the \textit{doubling index} of $g$ on $B$, denoted by $N(g,B)$, as
	\begin{align}\label{doubling index def}
		N(g,B)=\log \frac{\sup_{2B} |g|}{\sup_B |g|}.
	\end{align}
	It will be useful to also introduce an auxiliary function $f^H$ sometimes known as the \textit{harmonic lift} of $f$.  Following \cite[Page 231]{DIbook} (see also \cite{LM2}), we define $	f^H: M\times \mathbb{R} \rightarrow \mathbb{R}$ to be the unique solution of
	\begin{align} \label{harmonification}
		(\Delta+ \partial_t^2)f^H(x,t)=0 \hspace{10mm} f^H(x,0)=0 \hspace{10mm}\partial_t f^H(x,0)= f.
	\end{align}
	Note that we can write explicitly $f^H$ as
	\begin{align}
		f^H(x,t)=v^{-1/2}(T)\sum_{\lambda_i \in [T-\rho(T),T]} a_{i}\frac{\sinh(\lambda_i t)}{\lambda_i}\phi_{i}(x) \label{f^h},
	\end{align}
	where $a_{i}$ and $\phi_{i}$ are as in \eqref{function} and $v(T)$ is as in \eqref{defv}. Given a \textquotedblleft ball\textquotedblright, of radius $r>0$, $B\subset M\times \mathbb{R}$, which is defined to be $B=\tilde{B}\times I$ for a geodesic ball $\tilde{B}\subset M $ of radius $r>0$ and some interval $I\subset \mathbb{R}$ of length $r>0$, we denote by $B^{+}:=B \cap (M \times[0,\infty))$ and, given $r>0$, $rB^{+}:=rB \cap (M \times[0,\infty))$. We also define the doubling index of a function $g: 3B\subset M\times \mathbb{R}$ on $B$ or $B^{+}$ analogously to \eqref{doubling index def}. With this notation in mind, in this section we prove the following result:
	\begin{prop}
		\label{doubling index bound}
		Let $f$ and  $f^H$ be as in \eqref{function} and  \eqref{f^h} respectively. Then there exists some $\eta=\eta(M)>0$ such that the following holds: for every ball $B=B(x,r) \subset M\times \mathbb{R}$ centred at a point lying on $x\in M \cong M\times \{0\}$ of radius $r<\eta/10$, we have
		$$\mathcal{V}\left(f, \frac{1}{2}B \cap M\right)\lesssim N(f^H, 2B^{+}) .$$
	\end{prop}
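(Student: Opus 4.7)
The plan is to follow the Jerison--Lebeau strategy from~\cite[Section 14]{DIbook}, which reduces the nodal volume of $f$ to the doubling index of a \emph{single} elliptic solution. The obstruction that $f$ is itself a superposition of eigenfunctions with distinct eigenvalues---hence not a solution of any second-order PDE on $M$---is bypassed by the harmonic lift: as one sees from \eqref{f^h}, each summand $\sinh(\lambda_{i}t)\phi_{i}(x)/\lambda_{i}$ is annihilated by $\Delta+\partial_{t}^{2}$, so that $f^H$ is a real analytic solution of this elliptic operator on the real analytic manifold $M\times\mathbb{R}$. This places us in the classical setting for Donnelly--Fefferman type nodal bounds.

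The bridge between $f$ and $f^H$ is provided by the boundary conditions in \eqref{harmonification}: since $f^H(\cdot,0)\equiv 0$ and $\partial_{t}f^H(\cdot,0)=f$, a zero $x_{0}\in M$ of $f$ is a point at which both $f^H$ and its entire gradient vanish at $(x_{0},0)\in M\times\{0\}$. Consequently, locally near $M\times\{0\}$ the nodal set of $f^H$ decomposes as $\{t=0\}$ together with a second analytic variety that meets $\{t=0\}$ exactly along $\{f=0\}$; the zeros of $f$ are precisely the branching locus of $\{f^H=0\}$ on the hyperplane, a codimension-two stratum in $M\times\mathbb{R}$ whose $(n-1)$-dimensional Hausdorff measure is what we must estimate. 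With this picture in hand, I would invoke the boundary nodal bound of~\cite[Section 14]{DIbook}: if $u$ solves an analytic second-order elliptic equation on a ball $B\subset M\times\mathbb{R}$ and has vanishing Dirichlet data on a smooth hypersurface through its centre, then the $(n-1)$-volume of the zero set of its Neumann data on $\frac{1}{2}B$ is controlled by $N(u,2B^{+})$. Applied to $u=f^H$, whose Neumann data along $M\times\{0\}$ is exactly $f$, this yields the claimed inequality.

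The main effort will be in verifying that the estimate of~\cite{DIbook}, written originally for flat Euclidean half-balls, applies in our product geometry. For this I would fix $\eta=\eta(M)>0$ smaller than the injectivity radius of $(M,g)$ and pass to normal coordinates around an arbitrary centre $x\in M$ on balls of radius at most $\eta$, under which the product metric on $M\times(-\eta,\eta)$ becomes a real analytic perturbation of the Euclidean metric with analytic principal symbol. The polynomial approximation and doubling-index machinery underpinning~\cite[Section 14]{DIbook} is stable under such perturbations of the coefficients, and the compactness of $M$ lets us absorb all arising constants into a single $\lesssim$ depending only on $(M,g)$. The hardest ingredient, as in~\cite{DIbook}, is the codimension reduction---turning an $n$-dimensional nodal estimate inside $M\times\mathbb{R}$ into an $(n-1)$-dimensional boundary estimate on $M$---accomplished there via carefully controlled analytic models of $f^H$ near the branching stratum whose degree is controlled by $N(f^H,2B^{+})$.
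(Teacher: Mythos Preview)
Your high-level picture---that the harmonic lift $f^H$ places the problem in the Donnelly--Fefferman framework, and that the zeros of $f$ sit as the branching locus of $\{f^H=0\}$ along $M\times\{0\}$---is correct and geometrically appealing. But the route you sketch for the actual estimate diverges from what the paper (and Jerison--Lebeau) in fact does, and the step you flag as hardest, the ``codimension reduction via analytic models of $f^H$ near the branching stratum,'' is not how the argument is carried out in \cite[Section~14]{DIbook}; no such boundary nodal bound is proved there in the form you invoke.

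The paper never analyses the nodal set of $f^H$. Instead it complexifies $f$ itself: since $(M,g)$ is real analytic, $f$ extends holomorphically to a Grauert tube $M^{\mathbb{C}}_\eta$, and the nodal volume of $f$ in $\frac{1}{2}B\cap M$ is bounded, via the holomorphic estimate of Lemma~\ref{doubling index} (i.e.\ \cite[Theorem~14.7]{DIbook}, \cite[Proposition~6.7]{DF}), by the \emph{complex} doubling index
\[
\log\frac{\sup_{2(B\cap M)^{\mathbb{C}}}|f^{\mathbb{C}}|}{\sup_{(B\cap M)^{\mathbb{C}}}|f^{\mathbb{C}}|}.
\]
The role of $f^H$ is then purely analytic rather than geometric: hypoellipticity of $\Delta+\partial_t^2$ yields Cauchy-type bounds $\sup_{2(B\cap M)}|D^\alpha f|\lesssim C^{|\alpha|}\|f^H\|_{L^\infty(3B^+)}$, and summing the Taylor series controls $\sup_{2(B\cap M)^{\mathbb{C}}}|f^{\mathbb{C}}|$ by $\|f^H\|_{L^\infty(3B^+)}$. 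A Carleman-type inequality (Lemma~\ref{estimate DIbook}, from \cite[p.~231]{DIbook}) bounds $\|f^H\|_{L^\infty(B^+)}$ below by a power of $\|f\|_{L^\infty(B\cap M)}$, which closes the loop: the complex doubling index of $f^{\mathbb{C}}$ is controlled by $N(f^H,2B^+)$.

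So the ``codimension reduction'' is accomplished by complexifying $f$, not by studying the branching of $f^H$. Your proposed route would instead require a bound on the $(n-1)$-measure of the singular stratum of the nodal set of a harmonic function in terms of its doubling index---an estimate in the spirit of Han or Hardt--Simon--Lin, which is a genuinely different and more delicate input than anything supplied in \cite[Section~14]{DIbook}. If you want to follow the paper, replace the branching-locus analysis by the two lemmas above.
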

	
	Our main tool to prove Proposition \ref{doubling index bound} is the following general result which we borrow from \cite[Theorem 14.7]{DIbook} and \cite[Proposition 6.7]{DF}:
	\begin{lem}
		\label{doubling index}
		Let $B^{\mathbb{C}}\subset \mathbb{C}^n$ be a ball of radius $1$, and let $H$ be a holomorphic function on $2B^{\mathbb{C}}$. If
		\begin{align}
			|H|_{L^{\infty}(2B^{\mathbb{C}})}\leq e^{CN} |H|_{L^{\infty}(B^{\mathbb{C}})} \nonumber
		\end{align}
		for some $C>0$, then
		\begin{align}
			\mathcal{H}^{n-1}\left(\{H=0\} \cap \frac{1}{2}B \cap \mathbb{R}^n\right)\lesssim N . \nonumber
		\end{align}
	\end{lem}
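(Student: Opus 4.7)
The plan is to combine the classical Cauchy--Crofton integral-geometric formula with Jensen's formula in one complex variable. I would reduce the real $(n-1)$-dimensional zero measure to a line-by-line intersection count, and then bound each count by Jensen applied to the restriction of $H$ to complexified real affine lines.

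First, by Cauchy--Crofton applied to the (real-analytic) hypersurface $\{H=0\}\cap \mathbb{R}^n$, there is a constant $c_n > 0$ depending only on $n$ such that
\begin{equation*}
\mathcal{H}^{n-1}\bigl(\{H=0\}\cap \tfrac{1}{2}B\cap \mathbb{R}^n\bigr) \le c_n \int_{S^{n-1}}\int_{v^{\perp}\cap \tfrac{1}{2}B} \#\bigl(\{H=0\}\cap \ell_{v,w}\cap \tfrac{1}{2}B\bigr)\, dw\, d\sigma(v),
\end{equation*}
where $\ell_{v,w} = \{w+tv : t\in \mathbb{R}\}$ runs over the affine real lines meeting $\tfrac{1}{2}B\cap \mathbb{R}^n$. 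It therefore suffices to bound the inner integrand, line by line.

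Next, for each pair $(v,w)$ with $v\in S^{n-1}$ and $w\in v^{\perp}\cap \mathbb{R}^n$, $|w|\le \tfrac{1}{2}$, I consider the one-variable restriction $h_{v,w}(\zeta) := H(w+\zeta v)$, $\zeta\in\mathbb{C}$. Since $w\perp v$ in the Euclidean (hence Hermitian) inner product, $|w+\zeta v|^{2}_{\mathbb{C}} = |w|^{2}+|\zeta|^{2}$; therefore $h_{v,w}$ is holomorphic on $\{|\zeta|^{2} < 4-|w|^{2}\}$, and $\{|\zeta|^{2} < 1-|w|^{2}\}$ maps into $B^{\mathbb{C}}$ under $\zeta\mapsto w+\zeta v$. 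Both disks have radii uniformly bounded away from $0$, and the number of \emph{real} zeros of $h_{v,w}$ on $\ell_{v,w}\cap \tfrac{1}{2}B$ is at most the number of \emph{complex} zeros in $D(0,\tfrac{1}{2})$. I then apply Jensen's formula in the form: choosing a point $z_{0}\in \overline{D(0,R_{2})}$ where $|h_{v,w}|$ attains its maximum on $D(0,R_{2})$ and expanding Jensen around $z_{0}$ inside the larger disk of radius $R_1$, one gets
\begin{equation*}
\#\bigl\{\zeta\in D(0,\tfrac{1}{2}) : h_{v,w}(\zeta)=0\bigr\} \lesssim \log\max_{D(0,R_{1})}|h_{v,w}| - \log\max_{D(0,R_{2})}|h_{v,w}|,
\end{equation*}
with implicit constant depending only on the fixed radii $R_{2}<R_{1}$.

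Finally, I integrate this pointwise bound over $(v,w)$ and apply Fubini. The upper side $\int\log\max_{D(0,R_1)}|h_{v,w}|\,dw\,d\sigma(v)$ is bounded by a constant multiple of $\log\max_{2B^{\mathbb{C}}}|H|$, since the outer disk always maps into $2B^{\mathbb{C}}$. The matching lower bound
\begin{equation*}
\int_{S^{n-1}}\int_{v^{\perp}\cap \tfrac{1}{2}B}\log\max_{D(0,R_{2})}|h_{v,w}|\,dw\,d\sigma(v) \gtrsim \log\max_{B^{\mathbb{C}}}|H| - O(1)
\end{equation*}
is the crux and, I expect, the main obstacle: the naive substitution $\log\max|h_{v,w}| \ge \log|H(w)|$ fails, since $\log|H|$ can dip to $-\infty$ on large subsets of $\tfrac{1}{2}B\cap \mathbb{R}^n$. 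The remedy is to exploit the plurisubharmonicity of $\log|H|$ on $2B^{\mathbb{C}}$: the sub-mean-value property applied to suitable complex sub-disks, combined with an argument associating to the global maximiser $z^{\ast}\in \overline{B^{\mathbb{C}}}$ a positive-measure family of real lines whose complexifications pass near $z^{\ast}$, produces the averaged lower bound up to universal constants (this is the Lelong/Nadirashvili-style estimate underlying the Donnelly--Fefferman approach). Combining the two bounds yields the doubling-index estimate $\mathcal{H}^{n-1}(\{H=0\}\cap \tfrac{1}{2}B\cap \mathbb{R}^n) \lesssim \log(M/m) \lesssim N$, as required.
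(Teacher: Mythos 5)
The paper does not prove this lemma: it is imported wholesale from \cite[Theorem 14.7]{DIbook} and \cite[Proposition 6.7]{DF}, so there is no internal argument to compare against. Your sketch is a re-derivation along the classical Donnelly--Fefferman route (a Crofton integral-geometric reduction, followed by Jensen's formula on complexified lines, followed by an averaged lower bound), which is indeed the strategy underlying the cited sources. The skeleton -- the Crofton reduction, the identity $|w+\zeta v|^{2}_{\mathbb{C}}=|w|^{2}+|\zeta|^{2}$ for $w\perp v$, and the centred Jensen count on nested disks -- is correct.

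The gap you yourself flag is, however, a genuine one, and your stated form of the missing bound is quantitatively wrong. You claim
$\int_{S^{n-1}}\int_{v^{\perp}\cap\frac12 B}\log\max_{D(0,R_{2})}|h_{v,w}|\,dw\,d\sigma(v)\gtrsim\log\max_{B^{\mathbb{C}}}|H|-O(1)$,
but this already fails for $H(z)=z_{1}^{k}$: normalizing $\max_{B^{\mathbb{C}}}|H|=1$ gives $N\asymp k$, while for lines $\ell_{v,w}$ one has $\log\max_{D(0,R_{2})}|h_{v,w}|=k\log(|w_{1}|+R_{2}|v_{1}|)$, whose Crofton average is $-ck=-cN$ for some $c>0$, not $-O(1)$. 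The correct averaged lower bound is $\gtrsim\log\max_{B^{\mathbb{C}}}|H|-O(N)$; this weaker estimate does suffice after integrating the per-line Jensen inequality (it contributes another $O(N)$ term alongside $\log\max_{2B^{\mathbb{C}}}|H|\le CN$), but it still has to be proved. Your ``remedy'' paragraph gestures at plurisubharmonicity, sub-mean-value arguments, and a positive-measure family of lines through a near-maximiser $z^{\ast}$, but a positive-measure family of \emph{good} lines is not enough: without a quantitative Cartan/Remez-type bound on the size of the sub-level sets of $\log|H|$ (driven by the doubling hypothesis), nothing prevents a large-measure set of lines on which $\log\max_{D(0,R_{2})}|h_{v,w}|$ is hugely negative, wrecking the integral. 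That quantitative sub-level-set estimate is precisely the mathematical content of \cite[Proposition 6.7]{DF} and of the Jerison--Lebeau version in \cite{DIbook}; your sketch identifies where it is needed but does not supply it.
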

	
	To ease the exposition, we split the proof of Proposition \ref{doubling index bound} into a series of steps: using the fact that $M$ is real analytic we will bound $\mathcal{V}(f,\cdot)$ by the order of growth of the complex extension of $f$, defined in section \ref{Grauert tube} below. We will then show how $N(f^H,\cdot)$ controls the said order of growth.

	\subsection{Complexification of $f$}
	\label{Grauert tube}

	Since $(M, g)$ is real analytic and compact, by the Bruhat-Whitney Theorem \cite{WB59} there exists a complex manifold $M^{\mathbb{C}}$ where $M$ embeds as a totally real manifold. Moreover, it is possible  to analytically continue any Laplace eigenfunction $\phi_{i}$ to a homomorphic function $\phi_{i}^{\mathbb{C}}$ defined on  a maximal uniform \textit{ Grauert tube}, that is there exists some $\eta>0$ such that $\phi_{i}^{\mathbb{C}}$ is an homomorphic function on
	\begin{align}
		M_{\eta}^{\mathbb{C}}:=\{\zeta \in M^{\mathbb{C}}:	\sqrt{\gamma}(\zeta) < \eta \} \label{grauert tube}
	\end{align}
	where $\sqrt{\gamma}(\cdot)$ is the Grauert tube function, for details see \cite[Chapter 14]{Zbook}. For notational brevity, from now on we will write $M^{\mathbb{C}}=M_{\eta}^{\mathbb{C}}$, as the precise value of $\eta$ will be unimportant, and let $f^{\mathbb{C}}$, defined on $M^{\mathbb{C}}$, be the complexification of $f$. The next Lemma quantify the growth of $f^{\mathbb{C}}$ in terms of the doubling index of $f^H$.
	\begin{lem}
		\label{complexification}
		Let $f$ be as in \eqref{function}, $f^H$ be  as in \eqref{harmonification} and let $f^{\mathbb{C}}$ be the complexification of $f$. Let  $B\subset  M \times \mathbb{R}$ be a  ball  centred at a point lying on $M\cong M \times \{0\}$  of radius less than $\eta/3$, where $\eta$ is as in \eqref{grauert tube}. Suppose that, for some $N>0$, one has
		\begin{align} \nonumber
			||f^H||_{L^{\infty}({3B^{+}})}\lesssim e^N ||f^H||_{L^{\infty}(B^{+})}
		\end{align}
		then
		\begin{align} \nonumber
			||f^{\mathbb{C}}||_{L^{\infty}(2(B\cap M)^{\mathbb{C}})}\leq C' e^{CN}	||f||_{L^{\infty}(B\cap M)}.
		\end{align}
		for some absolute constant $C>0$ and $C'=C'(\eta)>0$.
	\end{lem}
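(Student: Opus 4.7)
The plan is to view both $f^{\mathbb{C}}$ (the Grauert-tube holomorphic extension) and $f^{H}$ (the harmonic lift to $M\times\mathbb{R}$) as real-analytic continuations of the same function $f$, and to transfer the given doubling estimate on $f^{H}$ across to $f^{\mathbb{C}}$. The crux is that since $f^{H}$ is harmonic on $M\times\mathbb{R}$ with real-analytic coefficients, it itself admits a holomorphic extension $\tilde{H}$ to a complex neighbourhood of $M\times\mathbb{R}$ inside $M^{\mathbb{C}}\times\mathbb{C}$; by uniqueness of holomorphic continuation from the totally real slice $M\times\{0\}$, the $t$-derivative $\partial_{t}\tilde{H}(\zeta,0)$ on $M^{\mathbb{C}}$ must coincide with $f^{\mathbb{C}}(\zeta)$, because both agree with the function $x \mapsto \partial_{t}f^{H}(x,0)=f(x)$ on $M$.

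The first step is to establish $\|f^{\mathbb{C}}\|_{L^{\infty}(2(B\cap M)^{\mathbb{C}})} \lesssim \|f^{H}\|_{L^{\infty}(3B^{+})}$. Here I would apply a standard Poisson-kernel/Cauchy-type bound for analytic continuations of harmonic functions: on a complex neighbourhood $\mathcal{U}$ of $2(B\cap M)\times\{0\}$ of bi-radius comparable to the radius $r$ of $B$, one has $\|\tilde{H}\|_{L^{\infty}(\mathcal{U})} \lesssim \|f^{H}\|_{L^{\infty}(3B^{+})}$, with implicit constant depending only on $\eta$. Applying the Cauchy integral formula in the $t$-variable to extract $\partial_{t}\tilde{H}(\zeta,0)$ then gives
\[
\|f^{\mathbb{C}}\|_{L^{\infty}(2(B\cap M)^{\mathbb{C}})} = \|\partial_{t}\tilde{H}(\cdot,0)\|_{L^{\infty}(2(B\cap M)^{\mathbb{C}})} \lesssim \|f^{H}\|_{L^{\infty}(3B^{+})},
\]
the multiplicative $r^{-1}$ from differentiating being harmless since $r<\eta/10$ is bounded and therefore absorbable into $C'(\eta)$.

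The second step applies the doubling hypothesis, $\|f^{H}\|_{L^{\infty}(3B^{+})} \lesssim e^{N}\|f^{H}\|_{L^{\infty}(B^{+})}$. The third and most delicate step is to replace $\|f^{H}\|_{L^{\infty}(B^{+})}$ by $\|f\|_{L^{\infty}(B\cap M)}$ at the cost of a further factor $e^{C''N}$. For this, I would extend $f^{H}$ by odd reflection across $\{t=0\}$, legitimate since $f^{H}|_{t=0}=0$, to a harmonic function on the full ball $B\subset M\times\mathbb{R}$ whose normal derivative at $\{t=0\}$ is $f$. Almgren's frequency-function monotonicity for this harmonic extension then propagates the hypothesized doubling at scale $r$ down to scales of order $1/T$, where the Taylor expansion $f^{H}(x,t)=tf(x)+O(t^{3}T^{2}\|f\|_{\infty})$ yields $\|f^{H}\|_{L^{\infty}(B^{+}_{1/T})} \lesssim T^{-1}\|f\|_{L^{\infty}(B\cap M)}$, and iteration upward between the two scales produces the desired $e^{C''N}$ bound.

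The main obstacle lies precisely in this anchoring of $f^{H}$ to $f$: in general $f^{H}$ can be exponentially larger than $f$, since its spectral representation contains $\sinh(\lambda_{i}t)$ with $\lambda_{i}\sim T$ and $t$ as large as order $\eta/10 \gg 1/T$. The doubling hypothesis is exactly what is needed to absorb this discrepancy, via the monotonicity of Almgren's frequency function for the odd extension; this is the heart of the Jerison--Lebeau approach \cite[Section~14]{DIbook}. The other two steps are, by comparison, standard applications of Cauchy's formula and Poisson-kernel extension bounds for holomorphic/harmonic functions.
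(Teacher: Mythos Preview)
Your Steps~1 and~2 are essentially in line with the paper's approach: the paper also bounds $\sup|D^{\alpha}f|$ by $\|f^{H}\|_{L^{\infty}(3B^{+})}$ via analytic hypoellipticity of $\partial_{t}^{2}+\Delta$, then sums the Taylor series to control $f^{\mathbb{C}}$, and invokes the doubling hypothesis. The essential divergence is in your Step~3.

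The paper does not iterate down to the spectral scale $1/T$. Instead it invokes a single black-box Carleman-type boundary estimate (stated as Lemma~\ref{estimate DIbook}, taken from \cite[p.~231]{DIbook}):
\[
\|f^{H}\|_{L^{\infty}(B^{+})}\lesssim_{\beta,M}\|\partial_{t}f^{H}\|^{\beta}_{L^{\infty}(B\cap M)}\,\|f^{H}\|^{1-\beta}_{L^{\infty}(2B^{+})}
\]
for some $0<\beta<1$ uniform over all such balls. Since $\partial_{t}f^{H}|_{t=0}=f$, substituting the doubling hypothesis $\|f^{H}\|_{2B^{+}}\lesssim e^{N}\|f^{H}\|_{B^{+}}$ into the right-hand side and rearranging yields $\|f^{H}\|_{B^{+}}\lesssim e^{((1-\beta)/\beta)N}\|f\|_{B\cap M}$ in one step, with no reference to $T$ whatsoever.

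Your proposed route through Almgren monotonicity and the scale $1/T$ does not close. Monotonicity of the frequency indeed gives that the doubling index at every scale $s\le r$ is $\lesssim N$, but iterating from $s=1/T$ up to $s=r$ requires $\sim\log_{2}(rT)$ doubling steps, producing $e^{CN\log(rT)}$ rather than $e^{CN}$. Since the lemma is stated for all radii $r<\eta/3$ independently of $T$, the quantity $\log(rT)$ is unbounded as $T\to\infty$ and the resulting constant is not absolute. The same anchoring to $1/T$ also causes a mismatch in the prefactor: your Taylor bound gives $T^{-1}$ where a factor of $r$ is needed to cancel the $r^{-1}$ coming from the Cauchy estimate in Step~1 (note that $r<\eta/10$ bounds $r$ only from \emph{above}, so $r^{-1}$ is not absorbable into $C'(\eta)$ as you assert). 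The fix is exactly to replace the multiscale iteration by a one-step boundary unique-continuation inequality at the fixed scale $r$, which is what Lemma~\ref{estimate DIbook} supplies.
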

	To prove Lemma \ref{complexification}, we first need the following result borrowed from \cite[Page 231]{DIbook}.
	\begin{lem}
		\label{estimate DIbook}
		Let $B=B(x,r)\subset M \times \mathbb{R} $ be a ball centred at a point lying on $M\cong M\times \{0\}$ of radius $r>0$. Then, uniformly for all $B$, there exists some $0< \beta<1$ such that
		\begin{align}
			||f^H||_{L^{\infty}(B^{+})}\lesssim_{\beta,M} || \partial_t f^H||^{\beta}_{L^{\infty}(B\cap M)}|| f^H||^{1-\beta}_{L^{\infty}(2B^{+})}. \nonumber
		\end{align}
	\end{lem}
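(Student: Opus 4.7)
The target inequality is a three-spheres / Hadamard interpolation for the harmonic lift $f^{H}$, in which the innermost sup-norm is replaced by the sup-norm of the Neumann trace $\partial_{t}f^{H}|_{M\times\{0\}}=f$. My plan is to marry the classical three-spheres theorem for harmonic functions with the Dirichlet condition $f^{H}|_{M\times\{0\}}\equiv 0$ via a Taylor expansion in $t$ that brings the normal derivative into play.

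First, I would observe that \eqref{f^h} displays $f^{H}$ as odd in $t$, so $f^{H}$ extends across $M\times\{0\}$ to a function harmonic (with respect to the product metric) on a neighbourhood of $B$ in $M\times\mathbb{R}$. By the odd symmetry, $\|f^{H}\|_{L^{\infty}(B)}=\|f^{H}\|_{L^{\infty}(B^{+})}$, and analogously on $\epsilon B$ and $2B$. Choosing analytic normal coordinates around the centre of $B$ makes $\Delta_{g}+\partial_{t}^{2}$ into a uniformly elliptic operator with real-analytic coefficients on a Euclidean-comparable ball, to which the classical Hadamard three-spheres theorem applies: for any $\epsilon\in(0,\tfrac{1}{2})$, there is $\beta_{0}=\beta_{0}(\epsilon,M)\in(0,1)$ satisfying
\[
\|f^{H}\|_{L^{\infty}(B^{+})}\;\le\;C\,\|f^{H}\|_{L^{\infty}(\epsilon B^{+})}^{\beta_{0}}\,\|f^{H}\|_{L^{\infty}(2B^{+})}^{1-\beta_{0}}.
\]

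To convert the innermost sup into a boundary-data term, I would Taylor-expand $f^{H}$ in $t$ at $t=0$: using $f^{H}(x,0)=0$, $\partial_{t}f^{H}(x,0)=f(x)$ and harmonicity $\partial_{t}^{2}f^{H}=-\Delta_{x}f^{H}$,
\[
|f^{H}(x,t)|\;\le\;|t|\,|f(x)|+\tfrac{1}{2}t^{2}\sup_{|\tau|\le |t|}|\Delta_{x}f^{H}(x,\tau)|\qquad\text{on }\epsilon B^{+}.
\]
Interior Schauder estimates applied to the harmonically-extended $f^{H}$ on $B\subset M\times\mathbb{R}$ bound the last sup by $r^{-2}\|f^{H}\|_{L^{\infty}(2B^{+})}$, yielding
\[
\|f^{H}\|_{L^{\infty}(\epsilon B^{+})}\;\lesssim\;\epsilon r\,\|\partial_{t}f^{H}\|_{L^{\infty}(B\cap M)}+\epsilon^{2}\,\|f^{H}\|_{L^{\infty}(2B^{+})}.
\]

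Feeding this into the three-spheres inequality and applying $(a+b)^{\beta_{0}}\le a^{\beta_{0}}+b^{\beta_{0}}$ produces the desired piece, of order $\|\partial_{t}f^{H}\|_{L^{\infty}(B\cap M)}^{\beta_{0}}\|f^{H}\|_{L^{\infty}(2B^{+})}^{1-\beta_{0}}$, plus a contaminating remainder of order $\epsilon^{2\beta_{0}}\|f^{H}\|_{L^{\infty}(2B^{+})}$. The main obstacle is the removal of this second term: it cannot be directly absorbed into the left hand side since $B^{+}\subsetneq 2B^{+}$. To circumvent this, I would iterate the Taylor-plus-three-spheres estimate across a nested sequence of concentric sub-balls, so that the contaminating remainder accumulates into a geometric series in $\epsilon^{2\beta_{0}}$ while the principal Taylor term re-appears with a compounded but still non-trivial exponent; alternatively, a single-step boundary Carleman estimate for $\Delta+\partial_{t}^{2}$ with Dirichlet data on $\{t=0\}$ delivers the same interpolation in one stroke, with an explicit $\beta\in(0,1)$ depending only on the local Riemannian geometry of $M$.
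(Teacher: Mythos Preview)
The paper does not prove this lemma; it is quoted verbatim from \cite[Page~231]{DIbook}, so there is no in-paper argument to compare against and I assess your proposal directly.

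The odd-extension and Taylor steps are fine, but the three-spheres route does not close. The key obstruction is that the inner exponent $\beta_{0}=\beta_{0}(\epsilon)$ in the three-spheres inequality decays to $0$ as $\epsilon\to 0$ (for harmonic functions, $\beta_{0}(\epsilon)\sim\log 2/\log(2/\epsilon)$). Normalise $\|f^{H}\|_{L^{\infty}(2B^{+})}=1$ and set $\theta:=r\,\|\partial_{t}f^{H}\|_{L^{\infty}(B\cap M)}\ll 1$; your combined bound reads
\[
\|f^{H}\|_{L^{\infty}(B^{+})}\;\lesssim\;(\epsilon\theta+\epsilon^{2})^{\beta_{0}(\epsilon)}.
\]
Optimising in $\epsilon$ (e.g.\ $\epsilon=\theta$) gives $(2\theta^{2})^{\beta_{0}(\theta)}\to 2^{-2}$ as $\theta\to 0$---a \emph{constant}, not a positive power of $\theta$. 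So the ``contaminating remainder'' $\epsilon^{2\beta_{0}}\|f^{H}\|_{L^{\infty}(2B^{+})}$ is never small in the regime of interest, and the proposed iteration across concentric balls cannot repair this: iterating outward sends the outer radius past $2B$, while iterating inward only reproduces the trivial monotonicity $\|f^{H}\|_{\rho B^{+}}\le \|f^{H}\|_{2B^{+}}$. What you need is a genuine \emph{boundary} propagation-of-smallness estimate (stability for the Cauchy problem with data on $\{t=0\}$), and interior three-spheres is strictly weaker than that.

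Your final clause names the correct tool---a Carleman estimate with weight adapted to $\{t=0\}$, which is how the cited Jerison--Lebeau argument (and Lin~\cite{Lin91}) proceeds---but invoking it is not a proof. As written, the argument is incomplete.
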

	We are now ready to prove Lemma \ref{complexification}
	\begin{proof}[Proof of Lemma \ref{complexification}]
		Since $f^H$ satisfies
		$$(\partial_t^2 +\Delta)f^H=0,$$
		the hypoellipticity of the operator $\partial_t^2 +\Delta$, see for example \cite[Lemma 7.5.1 and equation (4.4.1)]{H63}, for any multi-index $|\alpha|>0$, gives
		\begin{align} \label{6}
			\sup_{B^{+}}|D ^{\alpha}f^H|\lesssim C^{|\alpha|}	|| f^H||_{H^1\left(\frac{5}{2}B^{+}\right)}
		\end{align}
		for some $C>1$, where $H^1$ is the Sobolev norm. By elliptic regularity (we can compare the $H_1$ norm of $f^H$ with the $L^{\infty}$ norm of $f^H$, see \cite[Page 330]{Ebook}), we also have
		\begin{align}\label{7}
			|| f^H||_{H^1\left(\frac{5}{2}B^{+}\right)} \lesssim ||f^H||_{L^{\infty}(3B^{+})}.
		\end{align}
		Moreover, the assumption on the doubling index of $f^H$ and Lemma \ref{estimate DIbook} give
		\begin{align} \label{8.1}
			||f^H||_{L^{\infty}(3B^{+})}\lesssim e^N ||f^H||_{L^{\infty}(B^{+})}\lesssim e^{CN} ||f||_{L^{\infty}(B \cap M)}
		\end{align}
		for some $C>0$. Therefore, putting \eqref{6}, \eqref{7} and \eqref{8.1} together, we obtain
		\begin{align} \label{8}
			\sup_{2(B \cap M)}|D ^{\alpha}f|\lesssim e^{CN +C'|\alpha|}\sup_{B \cap M} |f|.
		\end{align}
		Since $f$ is real analytic, we can write
		$$f^{\mathbb{C}}(z)= \sum_{\alpha} \frac{D^{\alpha} f(x)}{|\alpha| !} z^{|\alpha|}$$
		thus, Lemma \ref{complexification} follows by \eqref{8}.
	\end{proof}
	
	\subsection{Concluding the proof of Proposition \ref{doubling index bound} }
	
	\begin{proof}
		First, Lemma \ref{doubling index} gives
		\begin{align}
			\mathcal{V}\left(f, \frac{1}{2}B \cap M \right)\lesssim \log \frac{\sup_{2(B \cap M)^{\mathbb{C}}}|f^{\mathbb{C}}|}{\sup_{(B \cap M)^{\mathbb{C}}}|f^{\mathbb{C}}|} \label{4f1}
		\end{align}
		where $f^{\mathbb{C}}$ is the complexification of $f$, note that here we have used the assumption that $r<\eta/10$ where $\eta$ is given as in section \ref{grauert tube}. Now let $N= N(f^H,2B^{+})$, then Lemma \ref{complexification}  imply that
		\begin{align}
			\log \frac{\sup_{2(B \cap M)^{\mathbb{C}}}|f^{\mathbb{C}}|}{\sup_{(B \cap M)^{\mathbb{C}}}|f^{\mathbb{C}}|} \lesssim N  \label{4f2}.
		\end{align}
		Hence, the Proposition follows by combining \eqref{4f1} and \eqref{4f2}.
	\end{proof}
	
	\subsection{Estimates for the doubling index}
	In this section, we collect various estimates for $N(f^H,\cdot)$ encountered in section \ref{doubling and harmonic lift}. In light of Proposition \ref{doubling index bound} the said estimates will directly produce bounds for $\mathcal{V}(F_x)$. We begin with the following estimate, see \cite[page 231]{DIbook}:
	\begin{lem}
		\label{doubling estimates}
		Let $f^H$ be as in \eqref{harmonification} and let $B \subset M \times \mathbb{R}$ be a ball of any radius, then
		\begin{align}
			||f^H||_{L^{\infty}(2B^{+})}\lesssim e^{CT} ||f^H||_{L^{\infty}(B^{+})}.\nonumber
		\end{align}
		for some $C>0$.
	\end{lem}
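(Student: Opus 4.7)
The plan is to exploit the explicit series representation \eqref{f^h} of $f^H$ together with the fact that each $\lambda_i$ in the sum lies in $[T-\rho(T), T]$. The target factor $e^{CT}$ will emerge from the elementary growth bound $|\sinh(\lambda_i t)/\lambda_i| \le e^{T|t|}/(T-\rho(T))$ combined with the structural observation that $f^H$ is harmonic on $M \times \mathbb{R}$ with ``effective frequency'' bounded by $T$.

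The first step is to verify that $f^H$ solves $(\Delta + \partial_t^2)f^H = 0$. This is immediate from \eqref{f^h}: each summand is annihilated by $\Delta + \partial_t^2$, since $\Delta \phi_i = -\lambda_i^2 \phi_i$ and $\partial_t^2 \sinh(\lambda_i t) = \lambda_i^2 \sinh(\lambda_i t)$ cancel exactly. Harmonicity of $f^H$ thus grants access to standard interior elliptic machinery on $B^+$ and $2B^+$, in particular the $L^2$-to-$L^\infty$ bound $\|f^H\|_{L^{\infty}(2B^+)} \lesssim r^{-(n+1)/2}\|f^H\|_{L^2(\tfrac52 B^+)}$ (where $r$ is the radius of $B$), together with the trivial reverse estimate $\|f^H\|_{L^{\infty}(B^+)} \gtrsim r^{-(n+1)/2}\|f^H\|_{L^2(B^+)}$. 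The problem then reduces to bounding the ratio $\|f^H\|_{L^2(\tfrac52 B^+)}/\|f^H\|_{L^2(B^+)}$ by $e^{CT}$.

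I would then estimate this $L^2$ ratio via the series: on $\tfrac52 B^+$, where $|t| \le \tfrac52 r$, the uniform termwise estimate $|\sinh(\lambda_i t)/\lambda_i| \le e^{\tfrac52 Tr}/(T-\rho(T))$ yields an upper bound on $\|f^H\|_{L^2(\tfrac52 B^+)}^2$, while for $|t|$ of order $r$ inside $B^+$ the lower bound $|\sinh(\lambda_i t)/\lambda_i| \gtrsim |t|$ (from $\sinh(u)/u \ge 1$) yields a matching lower bound on $\|f^H\|_{L^2(B^+)}^2$. The narrowness of the spectral window $\rho(T)=o(T)$ is crucial here: it ensures that for each fixed $t$, the ratios $(\sinh(\lambda_i t)/\lambda_i)/(\sinh(T t)/T)$ are uniformly bounded across $i$, so that the $t$-dependence effectively factors out of the spectral sum up to a bounded multiplicative error.

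The main obstacle is that the eigenfunctions $\{\phi_i\}$ are orthonormal only on the full manifold $M$, not on a geodesic ball, so a direct Parseval-type identity for the local $L^2$-norm of $f^H$ is unavailable and the possible cancellations in the signed sum $\sum a_i \phi_i$ cannot be handled by naive termwise comparison. I would address this by inserting a Cauchy--Schwarz step between the upper and lower $L^2$ bounds: the unsigned coefficient sum dominates its signed counterpart up to the factor $\sqrt{v(T)} = O(T^{n/2}\rho(T)^{1/2})$, which is polynomial in $T$ and hence absorbed into $e^{CT}$. The residual $t$-integral ratio reduces to a universal quantity $\int_{|t| \le \tfrac52 r}\sinh^2(Tt)/T^2\, dt / \int_{|t|\le r}\sinh^2(Tt)/T^2\, dt$, which is $O(e^{CT})$ by direct computation, delivering the required doubling bound.
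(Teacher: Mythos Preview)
The paper does not supply its own proof of this lemma; it simply cites Jerison--Lebeau \cite[p.~231]{DIbook}, whose argument rests on Carleman estimates (equivalently, monotonicity of Almgren's frequency function) for harmonic functions, combined with the global growth bound $\|f^H(\cdot,t)\|_{L^2(M)} \lesssim e^{T|t|}\|f\|_{L^2(M)}$, which follows from orthonormality of the $\phi_i$ on all of $M$. The global $L^2$ doubling is then propagated down to local balls by a chain of three-ball inequalities.

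Your approach has a genuine gap at the lower bound. You correctly note that termwise estimation gives an upper bound on $\|f^H\|_{L^2(\frac52 B^+)}$, but the claimed ``matching lower bound'' on $\|f^H\|_{L^2(B^+)}$ does not follow from $|\sinh(\lambda_i t)/\lambda_i| \gtrsim |t|$: a lower bound on each summand says nothing about the size of the signed sum $\sum_i a_i\,\tfrac{\sinh(\lambda_i t)}{\lambda_i}\phi_i(x)$, which can exhibit arbitrary cancellation on a fixed small ball. The Cauchy--Schwarz step you propose runs in the wrong direction --- it produces upper bounds, never lower bounds --- so it cannot repair this. Controlling how small a linear combination of eigenfunctions can be on a ball is precisely the content of quantitative unique continuation, and any proof of the lemma must invoke a tool of that strength (Carleman weights, frequency monotonicity, or the three-ball inequality). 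Even the near-factorisation $f^H(x,t)\approx \tfrac{\sinh(Tt)}{T}f(x)$ suggested by the narrow spectral window only transfers the difficulty to $\|f\|_{L^2(B\cap M)}$, for which no elementary lower bound exists.
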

	We then have the following bound on $\mathcal{V}(F_x)$:
	\begin{lem}
		\label{doubling index lem}
		Let  $F_x$ be as in  \eqref{def of F_x}, then
		\begin{align}
			\mathcal{V}(F_x)\lesssim  T.  \nonumber
		\end{align}
	\end{lem}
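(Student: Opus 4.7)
The plan is to combine the nodal-volume–to–doubling-index bound of Proposition \ref{doubling index bound} with the doubling estimate of Lemma \ref{doubling estimates}, after first translating the rescaled quantity $\mathcal{V}(F_x)$ back into a statement about $f$ on a tiny geodesic ball around $x$.

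First I would carry out the change of variables. Since $\exp_x$ is a smooth diffeomorphism near $0\in T_xM$ and $d\exp_x(0)$ is a linear isometry, the map $y\mapsto \exp_x(y/T)$ sends $\tfrac12 B_0$ diffeomorphically onto a set asymptotic to the geodesic ball $B_g(x, 1/(2T))$, with bi-Lipschitz constants $(1/T)(1+o(1))$. Pulling back the $(n-1)$-dimensional Hausdorff measure therefore gives
\begin{equation*}
\mathcal{V}(F_x) \;=\; T^{n-1}\,\mathcal{V}\!\left(f,\, B_g(x,1/(2T))\right)\,(1+o_{T\to\infty}(1)).
\end{equation*}

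Next, take $B = B((x,0), 1/T)\subset M\times\mathbb{R}$, which is a ball of radius $1/T < \eta/10$ for $T$ sufficiently large, so that Proposition \ref{doubling index bound} applies. Inspecting the proof, the nodal-count inequality of Lemma \ref{doubling index} is stated for unit-radius balls; rescaling it to a ball of radius $1/T$ via $z\mapsto z/T$ absorbs a factor $T^{-(n-1)}$ into the Hausdorff measure while preserving the doubling hypothesis, giving
\begin{equation*}
\mathcal{V}\!\left(f,\, B_g(x,1/(2T))\right)\;\lesssim\; N(f^H, 2B^+)\cdot T^{-(n-1)}.
\end{equation*}
Applying Lemma \ref{doubling estimates} to the ball $2B\subset M\times\mathbb{R}$ then yields $N(f^H, 2B^+)\lesssim T$, and assembling the three bounds produces
\begin{equation*}
\mathcal{V}(F_x)\;\lesssim\; T^{n-1}\cdot T \cdot T^{-(n-1)}\;=\;T,
\end{equation*}
as required.

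The only genuine subtlety is tracking the radius $r = 1/T$ through Proposition \ref{doubling index bound}, since Lemma \ref{doubling index} and the complexification estimate of Lemma \ref{complexification} are phrased without an explicit $r^{n-1}$ factor. This is a routine scaling argument: the doubling index is dimensionless and invariant under $z\mapsto z/T$, while the ambient Hausdorff measure transforms by $T^{-(n-1)}$. Apart from this bookkeeping the result is a direct concatenation of the tools assembled in Section \ref{doubling and harmonic lift}.
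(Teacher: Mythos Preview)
Your proof is correct and follows essentially the same approach as the paper: combine Proposition \ref{doubling index bound} with Lemma \ref{doubling estimates} after accounting for the $T^{-(n-1)}$ scaling of the Hausdorff measure. The only cosmetic difference is that the paper first rescales the function, writing $f_{1/T}(\cdot)=f(T^{-1}\cdot)$ and applying Proposition \ref{doubling index bound} on a unit ball (so the proposition can be quoted verbatim and the scale-invariance of the doubling index is invoked afterwards), whereas you apply the proposition on the ball of radius $1/T$ and extract the $r^{n-1}$ factor by re-inspecting Lemma \ref{doubling index}; these are two equivalent ways of doing the same scaling bookkeeping.
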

	\begin{proof}
		Applying Proposition \ref{doubling index bound} to $f_{1/T}(\cdot)= f(T^{-1}\cdot)$ with $B= B_g(x,1)\times (0,1)$ (we tacitly assume that $T$ is sufficiently large so that $1/T\leq \eta/10$ with $\eta$ as in Proposition \ref{doubling index bound}), we obtain
		$$ 	\mathcal{V}\left(f_{1/T}, \frac{1}{2}B \cap M\right) \lesssim N (f^H_{1/T},2B).$$
		Since the $L^{\infty}$-norm is invariant under scaling, Lemma \ref{doubling estimates} gives $N (f^H_{1/T},2B)\lesssim T$. Therefore, Lemma \ref{doubling index lem} follows upon noticing that the definition of $F_x$  implies that $$   \mathcal{V}(F_x) \lesssim	\mathcal{V}\left(f_{1/T}, \frac{1}{2}B \cap M\right).$$
	\end{proof}
	However, using \cite[Theorem 5.3]{L1}, it is possible to improve on Lemma \ref{doubling index lem}. Indeed, we have the following:
	\begin{thm}[Logunov]
		\label{Logunov}
		Let $u$ be an harmonic function on $M \times \mathbb{R}$, and let $O\in M$. Then there exists constants $\kappa=\kappa(n)>0$, $N_0=N_0(M,g,n,O)$ and $R_0=R_0(M,g,n,O)$ such that for any cube $Q\subset B(O,R_0)$ the following holds: divide $Q$ into $B^n$ subcubes, then the number of subcubes with doubling index greater than $\max(N(u,Q)2^{-\log B/\log\log B}, N_0)$ is less than $O(B^{n-1-\kappa})$.
	\end{thm}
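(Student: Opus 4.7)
This is Logunov's polynomial upper bound for doubling indices of harmonic functions, whose proof in \cite{L1} is long and subtle. My plan is to outline the strategy in two layers: an analytic layer about harmonic functions on $M\times\mathbb{R}$, and an inductive combinatorial/multi-scale layer on cube decompositions, carrying out a proof by contradiction.

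\textbf{Analytic input.} The workhorse is a Hadamard three-cubes inequality for harmonic $u$: for concentric cubes $Q_1\subset Q_2\subset Q_3$ of comparable sides,
\begin{equation*}
\log\|u\|_{L^{\infty}(Q_2)}\le \alpha\log\|u\|_{L^{\infty}(Q_1)}+(1-\alpha)\log\|u\|_{L^{\infty}(Q_3)},
\end{equation*}
for some $\alpha\in(0,1)$ depending only on the ratios. Rephrased via the definition \eqref{doubling index def}, this produces both an almost-monotonicity of $N(u,\cdot)$ under enlargement and an \emph{additivity principle} along chains of overlapping cubes: the doubling index over the union of a chain dominates, up to universal constants, the sum of the individual doubling indices. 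Standard elliptic regularity and the analyticity of $M$ also give that $N(u,\cdot)$ behaves continuously under small perturbations of the cube and is essentially invariant under affine rescaling; these facts reduce the problem to a fixed model cube in $\mathbb{R}^{n+1}$ and allow the analysis to be purely Euclidean.

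\textbf{Combinatorial/multi-scale iteration.} Subdivide $Q$ into $B^n$ congruent subcubes and call a subcube \emph{bad} if its doubling index exceeds $N(u,Q)\cdot 2^{-\log B/\log\log B}$; the goal is to bound the number of bad subcubes by $O(B^{n-1-\kappa})$. A naive one-axis pigeonhole on columns of subcubes does \emph{not} suffice, since most columns may contain no bad subcube and the additivity principle alone yields only a weak saving. The correct approach is to iterate the subdivision at an intermediate base $K$, chosen as a function of $B$ and $\kappa$, and to track how ``bad mass'' propagates across scales. At each finer level the three-cubes inequality forces at least one sub-subcube of any bad cube to inherit almost the entire doubling index, producing chains of bad subcubes through many scales. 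The total number of scales is of order $\log B/\log K$, and optimising over $K$ is precisely what forces the peculiar threshold $2^{-\log B/\log\log B}$. A combinatorial counting of the resulting chains, grouped by their projections onto coordinate hyperplanes and combined with the additivity principle along each chain, then contradicts the assumed growth of $N(u,Q)$ whenever more than $cB^{n-1-\kappa}$ subcubes are bad.

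\textbf{Main obstacle.} The principal difficulty is designing the multi-scale bookkeeping so that the gain $\kappa=\kappa(n)>0$ is \emph{uniform}, i.e. independent of $B$, of $N(u,Q)\ge N_0$, and of the manifold beyond its dimension. Each passage to a finer scale introduces multiplicative losses from the three-cubes inequality and from the chain construction, and a naive iteration quickly erodes any exponent saving. The innovation of \cite{L1} is to balance these losses against a careful renormalisation of the doubling index at each stage; the slow factor $2^{-\log B/\log\log B}$ (rather than a clean power of $B$) is exactly what survives this optimisation. Reproducing this balancing act, and in particular extracting the single positive constant $\kappa=\kappa(n)$ independent of all other parameters, is in my view the main obstacle to a clean self-contained proof.
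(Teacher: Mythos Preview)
The paper does not prove this theorem at all: it is quoted verbatim as \cite[Theorem 5.3]{L1} and used as a black box to deduce Corollary~\ref{doubling index corollary}. There is therefore no ``paper's own proof'' against which to compare your proposal.

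That said, since you have sketched a strategy for Logunov's argument itself, a brief comment is in order. Your two-layer description (three-cubes inequality plus multi-scale combinatorics) captures the overall architecture, but it omits the central analytic innovation of \cite{L1}: the \emph{hyperplane lemma} (and the closely related simplex lemma). This is the statement that if too many subcubes of $Q$ carry doubling index comparable to $N(u,Q)$, then their centres must lie in a thin neighbourhood of a hyperplane. It is this geometric constraint, not a generic ``additivity along chains'' argument, that forces the bad set to be essentially codimension-one and yields the exponent $n-1-\kappa$. Your outline replaces this with a vague pigeonhole on columns and chains, which, as you yourself note, ``does not suffice''; but you never say what replaces it, and the actual replacement is precisely the hyperplane lemma. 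Without it the iteration you describe cannot close: the multiplicative losses you worry about in your ``main obstacle'' paragraph are controlled in \cite{L1} only because at each scale the bad cubes are already confined near a hyperplane, so the count drops by a full factor of $B$ before the losses are incurred. If you intend to flesh this out, that lemma is the missing load-bearing piece.
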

	In particular, by compactness, $M$ can be covered by finitely many cubes of radius $O(1)$; then, by covering each one of them by sub-cubes $B$ of radius $O(T^{-1})$ and applying Theorem \ref{Logunov} with $u=f^H$, in light of Lemma \ref{doubling estimates}, we obtain the following corollary:
	\begin{cor}
		\label{doubling index corollary}
		Let $F_x$ be as in section \ref{asymptotic gaussianity}. There exists some $\kappa=\kappa(n)>0$ and a subset $\tilde{A}\subset M$ of volume at most $O(T^{-1-\kappa})$, such that
		\begin{align}
			\mathcal{V}(F_x)\lesssim T 2^{-\log T/\log \log T}. \nonumber
		\end{align}
	for all $x\in M\backslash \tilde{A}.$
	\end{cor}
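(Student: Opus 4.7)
The plan is to combine Logunov's Theorem~\ref{Logunov} applied to the harmonic lift $u=f^H$ with Proposition~\ref{doubling index bound}, which converts sharp doubling-index bounds at the scale $1/T$ into sharp nodal volume bounds for the rescaled field $F_x$. The previous estimate $\mathcal{V}(F_x)\lesssim T$ of Lemma~\ref{doubling index lem} came from the crude global control $N(f^H,B)\lesssim T$ of Lemma~\ref{doubling estimates}; Logunov's theorem improves this on all but a tiny fraction of small sub-cubes, and this improvement transfers directly to $\mathcal{V}(F_x)$ for most $x$.

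First I would cover the compact manifold $M\cong M\times\{0\}\subset M\times\mathbb{R}$ by finitely many cubes $Q_1,\dots,Q_\ell$ of radius $R_0$, so that the conclusion of Theorem~\ref{Logunov} is available in each $Q_j$. Lemma~\ref{doubling estimates}, applied at this fixed scale, gives $N(f^H,Q_j)\lesssim T$ for every $j$. I then subdivide each $Q_j$ into $B^n$ congruent sub-cubes with $B=\lfloor T\rfloor$, so every sub-cube has side-length $\asymp T^{-1}$, matching the scale on which $F_x$ is defined.

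Logunov's theorem now produces, inside each $Q_j$, at most $O(B^{n-1-\kappa})=O(T^{n-1-\kappa})$ \emph{bad} sub-cubes on which the doubling index of $f^H$ exceeds
\begin{equation*}
\max\bigl(N(f^H,Q_j)\cdot 2^{-\log B/\log\log B},\, N_0\bigr)\lesssim T\cdot 2^{-\log T/\log\log T};
\end{equation*}
on every remaining \emph{good} sub-cube the above bound holds. Let $\tilde A\subset M$ be the projection onto $M$ of the union of bad sub-cubes, slightly thickened by an $O(T^{-1})$-neighbourhood so that a small enlargement of any good sub-cube still lies in the good region. Each sub-cube has $n$-dimensional volume $\asymp T^{-n}$ and there are $O(T^{n-1-\kappa})$ bad ones per $Q_j$, and $\ell=O(1)$, so $\vol(\tilde A)\lesssim T^{-1-\kappa}$.

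Finally, for $x\in M\setminus\tilde A$, I apply Proposition~\ref{doubling index bound} to $f_{1/T}(\cdot)=f(T^{-1}\cdot)$ on the ball $B=B_g(x,1)\times(0,1)$, exactly as in the proof of Lemma~\ref{doubling index lem}. Since the $L^\infty$ norm is scale-invariant, $N(f^H_{1/T},2B)$ equals the doubling index of $f^H$ on a ball of radius $\asymp T^{-1}$ around $(x,0)$, which by the enlargement trick is contained in a good sub-cube and therefore bounded by $T\cdot 2^{-\log T/\log\log T}$. Proposition~\ref{doubling index bound} then yields $\mathcal{V}(F_x)\lesssim \mathcal{V}\bigl(f_{1/T},\tfrac12 B\cap M\bigr)\lesssim T\cdot 2^{-\log T/\log\log T}$, as required. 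The only non-routine point is scale bookkeeping at the last step: one must ensure that, after rescaling by $T$, the ball used in Proposition~\ref{doubling index bound} comfortably lies in a good sub-cube, which is precisely why $\tilde A$ is enlarged by an $O(T^{-1})$-neighbourhood — a cosmetic adjustment that preserves the $O(T^{-1-\kappa})$ measure bound.
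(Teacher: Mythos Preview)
Your proposal is correct and follows essentially the same approach as the paper's own (one-sentence) proof sketch: cover $M$ by $O(1)$ cubes, subdivide at scale $T^{-1}$, apply Theorem~\ref{Logunov} to $u=f^H$ using the global bound $N(f^H,Q_j)\lesssim T$ from Lemma~\ref{doubling estimates}, and convert the resulting doubling-index improvement on good sub-cubes into a nodal volume bound via Proposition~\ref{doubling index bound}. Your additional bookkeeping (the $O(T^{-1})$-thickening of $\tilde A$ so that the rescaled ball $2B$ lands in good sub-cubes) is exactly the routine detail the paper suppresses; just note that passing from the doubling index on a good sub-cube to the doubling index on a comparable ball inside it uses the standard almost-monotonicity of the doubling index for harmonic functions, which you are implicitly invoking.
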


	\section{Anti-concentration}
	\label{anti-concentration section}
	The aim of this section is to show that $\mathcal{V}(F_x(\cdot, \omega))$ is uniformly integrable in $\omega\in \Omega$, that is we prove the following Proposition:
	\begin{prop}
		\label{anti-concentration long}
		Let $F_x$ be as in \eqref{def of F_x}, $v(T)$ be as in \eqref{defv}, and $g(t)=t\log t$. Suppose that  $v(T)\geq c_M T^2/\log T$, then there exists constants $C>0$, $\kappa=\kappa(n)>0$, independent of $T$, and a subset $\tilde{A}\subset M$ of volume at most $O(T^{-1-\kappa})$, such that
		\begin{align}
			\mathbb{E}[g(\mathcal{V}(F_x))]<C.  \nonumber
		\end{align}
		for all $x\in M\backslash \tilde{A}.$
	\end{prop}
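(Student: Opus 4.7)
The overarching plan is to bound $\mathcal{V}(F_x)$ by the doubling index of the harmonic lift via Proposition \ref{doubling index bound}, and then control the $L\log L$-norm of this doubling index by tail estimates: an upper-tail bound on the numerator $\sup|f^H|$ by $L^2$-moments, and a lower-tail bound on the denominator via the Kolmogorov--Rogozin anti-concentration inequality. First, I would invoke Proposition \ref{doubling index bound} applied to $f_{1/T}$ (as in the proof of Lemma \ref{doubling index lem}) to obtain $\mathcal{V}(F_x)\lesssim N:=N(f_{1/T}^H,2B^+)$ for a unit ball $B$ in rescaled coordinates around $x$. Since $g(t)=t\log t\leq C(1+t^2)$, it suffices to show $\mathbb{E}[N^2]\leq C$. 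Writing $N\leq(\log^+M_2)+(\log(1/M_1))_+$ with $M_j=\|f_{1/T}^H\|_{L^\infty(jB^+)}$, this reduces to bounding each summand in $L^2$.

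For the upper tail on $M_2$, a direct variance calculation using the explicit formula \eqref{f^h} together with Lemma \ref{diagonal Weyl} gives $\mathbb{E}[|f_{1/T}^H(y,t)|^2]=O(1)$ uniformly. An elliptic regularity argument in the spirit of Lemma \ref{elliptic comparison} then controls $M_2$ by a Sobolev norm on a slightly larger half-ball, yielding $\mathbb{E}[M_2^2]\leq C$ and hence $\mathbb{E}[(\log^+M_2)^p]\leq C_p$ for every $p<\infty$. For the lower tail on $M_1$, I would lower-bound $M_1\geq|f_{1/T}^H(y_0,t_0)|$ for a fixed interior point $(y_0,t_0)\in B^+$ with $t_0\asymp 1$. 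The right-hand side is a weighted sum $\sum_i a_ic_i$ of independent centered unit-variance random variables, with deterministic coefficients $c_i$ built from $\phi_i(\exp_x(y_0/T))$ and $\sinh(\lambda_it_0/T)/\lambda_i$. The Kolmogorov--Rogozin inequality yields
\[
\mathbb{P}\bigl(\bigl|\textstyle\sum_i a_ic_i\bigr|\leq\epsilon\bigr)\leq\frac{C\epsilon}{\sqrt{c_0\sum_{|c_i|\geq\epsilon/r_0}c_i^2}},
\]
where the constants $r_0,c_0>0$ depend only on the common law of the $a_i$ and satisfy $1-Q(a_i,r_0)\geq c_0$; such constants exist because $\mathrm{Var}(a_i)=1$ precludes $a_i$ from being concentrated at a single point.

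The main obstacle is the last step: controlling the right-hand side of the Kolmogorov--Rogozin bound by $C\epsilon$ uniformly for $\epsilon$ in a constant-sized interval. The local Weyl law (Lemma \ref{diagonal Weyl}) ensures $\sum c_i^2\asymp 1$; to remove the restriction $|c_i|\geq\epsilon/r_0$ one must show $\sum_{|c_i|<\epsilon/r_0}c_i^2\leq\tfrac12\sum c_i^2$. This uses the pointwise bound $\max_i|c_i|\leq K^{-1}$ on the complement of the exceptional set from Lemma \ref{sup bound} (for a suitable calibration of $K$), together with the hypothesis $v(T)\geq c_MT^2/\log T$, which ensures enough summands contribute significantly and is precisely the step where this hypothesis is used. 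The resulting polynomial anti-concentration $\mathbb{P}(M_1\leq\epsilon)\leq C\epsilon$ gives $\mathbb{E}[(\log(1/M_1))_+^p]\leq C_p$ for every $p$, so combined with the upper-tail bound on $M_2$ one obtains $\mathbb{E}[N^2]\leq C$ and hence $\mathbb{E}[g(\mathcal{V}(F_x))]\leq C'$. The exceptional set $\tilde A$ is taken as the union of the exceptional set from Lemma \ref{sup bound} with the $O(T^{-1-\kappa})$-exceptional set furnished by Corollary \ref{doubling index corollary}.
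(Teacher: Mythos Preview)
Your plan has a genuine gap: the target $\mathbb{E}[N^2]\le C$ is too strong and cannot be reached with the available anti-concentration. Kolmogorov--Rogozin (or Hal\'asz, as in the paper) only yields $\mathbb{P}\bigl(|\sum_i a_ic_i|\le\epsilon\bigr)\le C\epsilon$ in the range $\epsilon\gtrsim v(T)^{-1/2}$; for smaller $\epsilon$ the bound plateaus at order $v(T)^{-1/2}$, and under a bare second-moment assumption on the $a_i$ nothing better is possible. Your decomposition $N\le(\log^+M_2)+(\log(1/M_1))_+$ decouples the numerator and denominator of the doubling index, and once decoupled the lower-tail piece $\mathbb{E}\bigl[(\log(1/M_1))_+^2\bigr]$ picks up the contribution from $\{M_1<v^{-1/2}\}$, which your bound does not control. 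Even after invoking the cap $N\le h(T)$ from Corollary~\ref{doubling index corollary}, the plateau contributes a term of size roughly $v(T)^{-1/2}h(T)^2$, and under the hypothesis $v(T)\ge c_MT^2/\log T$ this still diverges. Your appeal to Lemma~\ref{sup bound} does not help here: an \emph{upper} bound $\max_i|c_i|\le K^{-1}$ cannot certify that the restricted sum $\sum_{|c_i|\ge\epsilon/r_0}c_i^2$ is bounded below for $\epsilon$ of constant size---if anything it works in the opposite direction---and the paper's anti-concentration (Lemma~\ref{ small values of $f$}) makes no use of Lemma~\ref{sup bound}.

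The paper's argument avoids this by never decoupling. It bounds the tail $\mathbb{P}(\mathcal{V}(F_x)>Q)$ jointly via $\mathbb{P}(|f(x)|<\tau)+\mathbb{P}(\sup|f^H|>\tau e^{Q})$ with a $Q$-dependent threshold $\tau=1/(QK)$ and $K=K(Q)=(\log Q)^3$, and then integrates $g'(t)\,\mathbb{P}(\mathcal{V}(F_x)>t)$ only over $t\in[1,h(T)]$, the range guaranteed by Corollary~\ref{doubling index corollary}. The point of the hypothesis $v(T)\ge c_MT^2/\log T$ is exactly to ensure $h(T)\le v(T)^{1/2}/K(h(T))$, so that the anti-concentration is only ever invoked with $\tau\ge v(T)^{-1/2}$, i.e.\ within its valid regime. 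Because $g(t)=t\log t$ (rather than $t^2$), the resulting integrand $g'(t)/(tK(t))\asymp 1/(t(\log t)^2)$ is integrable, whereas the corresponding integrand for the second moment, $2t/(tK(t))=2/(\log t)^3$, is not. In short: keep the numerator and denominator coupled, target $\mathbb{E}[g(\mathcal{V}(F_x))]$ directly rather than $\mathbb{E}[N^2]$, and use Corollary~\ref{doubling index corollary} to truncate the tail integral so that the Hal\'asz/KR bound is applied only where it is valid.
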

	For the sake of clarity, we divide the proof of Proposition \ref{anti-concentration long}, into lemmas: we first estimate the probability that $F_x$ attains small values, and then show how this can be used to bound the nodal volume.
	\subsection{Small values of $f$}
	To estimate the probability of occurrence of small of $f$, we need the following lemma of Halasz, see for example \cite{H} and \cite[Lemma 6.2]{NV13}.
	\begin{lem}[Halasz' inequality]
		\label{Halas'z}
		Let $X$ be a real-valued random variable, and let $\psi(t)=\mathbb{E}[\exp(itX)]$ be its characteristic function. Then there exists some absolute constant $C>0$ such that
		\begin{align}
			\mathbb{P}(|X|\leq 1)\leq C \int_{|t|\leq 1} |\psi(t)|dt. \nonumber
		\end{align}
	\end{lem}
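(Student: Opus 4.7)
The plan is to prove this classical inequality by a Fourier-analytic majorant argument. The strategy is to find a non-negative function $h\colon\mathbb{R}\to\mathbb{R}$ whose Fourier transform is bounded and supported in $[-1,1]$, and which majorises the indicator $\mathbf{1}_{[-1,1]}$; integrating $h$ against the law of $X$ then converts the probability $\mathbb{P}(|X|\leq 1)$ into an integral of $\psi$ over $[-1,1]$.

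First I would construct such an $h$ explicitly. Take
\[
h(x)=c\left(\frac{\sin(x/2)}{x/2}\right)^{2},
\]
with $c>0$ chosen so that $h(x)\geq 1$ for every $|x|\leq 1$; this is possible because the sinc-squared factor is continuous, equals $1$ at the origin and is uniformly bounded below by a positive constant on $[-1,1]$. The function $h$ is entire of exponential type $1$, so by the Paley--Wiener theorem (equivalently, by direct computation of the Fourier transform of the triangle function $t\mapsto\max(0,1-|t|)$), there exists a bounded function $\widehat{h}$ supported in $[-1,1]$ with the representation
\[
h(x)=\int_{-1}^{1}\widehat{h}(t)\,e^{itx}\,dt.
\]

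Next I would apply Fubini to rewrite the expectation of $h(X)$ in terms of $\psi$. Since $\widehat{h}$ is bounded and compactly supported, the exchange of expectation and integration is justified without any moment assumption on $X$, giving
\[
\mathbb{E}[h(X)]=\int_{-1}^{1}\widehat{h}(t)\,\mathbb{E}[e^{itX}]\,dt=\int_{-1}^{1}\widehat{h}(t)\,\psi(t)\,dt.
\]

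Finally, because $h\geq 0$ on $\mathbb{R}$ and $h\geq 1$ on $[-1,1]$, we obtain
\[
\mathbb{P}(|X|\leq 1)\leq\mathbb{E}[h(X)]\leq\|\widehat{h}\|_{\infty}\int_{-1}^{1}|\psi(t)|\,dt,
\]
which is the stated inequality with the absolute constant $C=\|\widehat{h}\|_{\infty}$. The only step requiring any real attention is fixing a Fourier-transform normalisation and verifying that the chosen sinc-squared function (with $c$ possibly rescaled) really does dominate $\mathbf{1}_{[-1,1]}$; both are elementary but should be done carefully in order to extract a clean absolute constant. No probabilistic input beyond the existence of $\psi$ is used.
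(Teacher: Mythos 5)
The paper does not prove Lemma \ref{Halas'z}; it simply cites Hal\'asz \cite{H} and \cite[Lemma 6.2]{NV13} and uses the statement as a black box. Your proof is a correct, self-contained derivation along the standard Fourier-analytic majorant route: you build a nonnegative function $h$ with band-limited, bounded Fourier transform that dominates the indicator of $[-1,1]$, then pass to the characteristic function by Fubini. The Fej\'er-kernel choice works exactly as you say: with $\widehat{h}(t)=c(1-|t|)$ on $[-1,1]$ one has
\[
\int_{-1}^{1}c(1-|t|)e^{itx}\,dt
= c\left(\frac{\sin(x/2)}{x/2}\right)^{2}=h(x),
\]
$h\geq 0$ everywhere, $h(x)\geq 4c\sin^{2}(1/2)$ on $[-1,1]$, so $c$ of order $1$ (e.g.\ $c=2$) suffices, and $\|\widehat{h}\|_{\infty}=c$ gives an explicit absolute constant. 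The Fubini exchange is justified because $\widehat h$ is bounded and compactly supported and $|e^{itX}|=1$, so no moment hypothesis on $X$ is needed. The only cosmetic caveat is that you should fix a single Fourier normalisation consistently throughout; with the one displayed above everything is clean. In short, the argument is correct and is essentially the same proof one finds in the cited sources, so there is no methodological divergence from the paper to report.
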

	We then prove the following Lemma:
	\begin{lem}
		\label{ small values of $f$}
		Let $f$ be as in \eqref{function} and $v=v(T)=c_M \rho(T)T^{n-1}$ as in \eqref{defv}. Then, uniformly for $x\in M$, we have
		\begin{align}
			\mathbb{P}\left( |f(x)|\leq \tau\right)\lesssim \tau + v(T)^{-1/2} \nonumber
		\end{align}
		where the constant implied in the $\lesssim$-notation is absolute.
	\end{lem}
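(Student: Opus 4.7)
The plan is to apply Halász's inequality to the rescaled variable $f(x)/\tau$ and to estimate the resulting integral of the characteristic function of $f(x)$, exploiting the unit-variance condition on the $a_i$ together with Sogge-type bounds on the eigenfunctions.

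First I would write $f(x) = \sum_i c_i a_i$ with $c_i = c_i(x) = v(T)^{-1/2}\phi_i(x)$, so that the characteristic function factors as $\psi(s) := \mathbb{E}[e^{isf(x)}] = \prod_i \psi_a(sc_i)$, where $\psi_a$ is the common characteristic function of the $a_i$. The basic estimates on $(c_i)$ are: by Lemma \ref{diagonal Weyl}, $\sum_i c_i^2 = 1 + o(1)$ uniformly in $x$; by Sogge's $L^\infty$ bound, $\max_i |c_i| \lesssim v(T)^{-1/2} T^{(n-1)/2} \lesssim \rho(T)^{-1/2}$; combining these yields the $\ell^4$ estimate $\sum_i c_i^4 \leq \max_i c_i^2 \cdot \sum_i c_i^2 \lesssim \rho(T)^{-1}$. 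A routine rescaling of the stated Halász inequality then gives
$$\mathbb{P}(|f(x)| \leq \tau) \lesssim \tau \int_{0}^{1/\tau} |\psi(s)|\, ds.$$

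Next I would bound $|\psi(s)|$ on an appropriate range. Since $\mathbb{E}[a^2] = 1$, a second-moment Taylor expansion at the origin yields $|\psi_a(u)| \leq \exp(-cu^2)$ for $|u| \leq u_0$, with $u_0, c > 0$ depending only on the law of $a$. Multiplying gives $|\psi(s)| \leq \exp(-cs^2 A(s))$ where $A(s) := \sum_{|c_i| \leq u_0/|s|} c_i^2$, and a Chebyshev-type estimate using $\sum_i c_i^4 \lesssim \rho(T)^{-1}$ produces $A(s) \geq 1/2$ throughout the range $|s| \lesssim \sqrt{\rho(T)}$. On that range the integrand enjoys Gaussian decay and $\int_{0}^{\sqrt{\rho(T)}} |\psi(s)|\, ds = O(1)$.

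To conclude, one splits according to the size of $\tau$. For $\tau \gtrsim v(T)^{-1/2}$ the Halász bound combined with the preceding characteristic-function estimate yields $\mathbb{P}(|f(x)| \leq \tau) \lesssim \tau$. For $\tau < v(T)^{-1/2}$, a complementary Kolmogorov--Rogozin-style small-ball estimate for the linear combination $\sum_i c_i a_i$ (using that the number of nontrivial summands is of order $v(T)$ and that the coefficients $c_i$ are spread) yields the floor $\mathbb{P}(|f(x)| \leq \tau) \lesssim v(T)^{-1/2}$. Adding the two contributions gives the claimed $\tau + v(T)^{-1/2}$.

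The main obstacle is that with only a second-moment assumption on the $a_i$ one cannot appeal to Berry--Esseen-type controls, and $|\psi_a|$ need not decay uniformly (e.g., for lattice distributions such as Rademacher). Consequently, pointwise bounds on $|\psi(s)|$ become resonant past $|s| \sim \sqrt{\rho(T)}$, which is precisely why the argument bifurcates at $\tau \sim v(T)^{-1/2}$ and produces the extra $v(T)^{-1/2}$ error term in the statement.
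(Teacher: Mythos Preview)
Your overall plan---Hal\'asz, a Taylor bound $|\psi_a(u)|\le e^{-cu^2}$ on a neighbourhood of the origin, then a case split in $\tau$---is the same as the paper's. The gap is in the range on which you obtain Gaussian decay for $|\psi(s)|$. From Sogge's $L^\infty$ bound you get $\max_i|c_i|\lesssim\rho(T)^{-1/2}$ and hence, via the $\ell^4$ estimate, $A(s)\ge 1/2$ only for $|s|\lesssim\sqrt{\rho(T)}$. But the Hal\'asz integral runs up to $1/\tau$, and to reach the threshold $\tau\sim v(T)^{-1/2}$ asserted in the lemma you need Gaussian decay up to $|s|\sim v(T)^{1/2}=(c_M\rho(T)T^{n-1})^{1/2}$, which exceeds $\sqrt{\rho(T)}$ by a factor $T^{(n-1)/2}$. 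So your argument only delivers $\mathbb{P}(|f(x)|\le\tau)\lesssim\tau$ for $\tau\gtrsim\rho(T)^{-1/2}$, leaving the whole interval $v(T)^{-1/2}\lesssim\tau\lesssim\rho(T)^{-1/2}$ uncovered. The paper does not use Sogge here at all: it thresholds on the \emph{unnormalised} values $|\phi_i(x)|\le K$ with an absolute constant $K$ (they take $K=1000$), so that on the good indices $|c_i|\le Kv^{-1/2}$ and the Taylor bound $|\psi_a(u)|\le e^{-cu^2}$ remains valid on the full range $|t|\le c'v^{1/2}/K$. The only input is the diagonal Weyl law $\sum_i|\phi_i(x)|^2\sim v$ (Lemma~\ref{diagonal Weyl}); no eigenfunction $L^p$ bound is invoked.

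For the small-$\tau$ regime your appeal to a Kolmogorov--Rogozin inequality is both unnecessary and, under only a second-moment hypothesis, not clearly available: the L\'evy concentration of a single $a_i$ can be arbitrarily close to $1$ (an atom of mass $1-\varepsilon$ is compatible with $\E[a_i]=0$, $\E[a_i^2]=1$), so the ``spread'' condition you invoke does not come for free. The paper's move is the trivial one: once $\mathbb{P}(|f(x)|\le\tau)\lesssim\tau$ is known for $\tau\ge c_1^{-1}v^{-1/2}$, monotonicity gives $\mathbb{P}(|f(x)|\le\tau)\le\mathbb{P}(|f(x)|\le c_1^{-1}v^{-1/2})\lesssim v^{-1/2}$ for all smaller $\tau$.
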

	\begin{proof}
		By Lemma \ref{Halas'z}, we have
		\begin{align}
			\mathbb{P}(|f(x)|/\tau\leq 1)&\leq C \int_{|t|\leq 1} |\mathbb{E}[ \exp( it |f(x)|/\tau)]|dt \nonumber\\
			&= C \tau \int_{|t|\leq 1/\tau} |\mathbb{E}[ \exp( it |f(x)|)]|dt \nonumber \\
			&= C \tau \int_{|t|\leq 1/\tau} \left|\prod_{\lambda_i} \psi_{i}(t)\right| dt \label{14}
		\end{align}
		where $\psi_{i}(t)= \mathbb{E}[\exp(it a_{i}\phi_{i}(x)/v^{1/2})]$ and $C$ is as in Lemma \ref{Halas'z}.
		
		Now, let $K>1$ be some parameter to be chosen later, and observe that, in light of Lemma \ref{diagonal Weyl}, we have $$v^{-1}\sum_{\lambda_i}|\phi_{i}(x)|^2\lesssim2,$$ thus there are at most $O(v/K^2)$ $\lambda_i$'s such that $|\phi_{i}(x)|> K$. We claim that, if $|\phi_{i}(x)|\leq K$, there exists some $c>0$ such that
		\begin{align}
			\left|\psi_{i}(t)\right| \leq e^{-t^2v^{-1}}\label{comparison bound}
		\end{align}
		for all  $t\in [-c v^{1/2}/K, cv^{1/2}/K]$. Indeed, if $|\phi_{i}(x)|\leq K$, then  $a_{i} \phi_{i}(x)v^{-1/2}$ is a random variable with mean zero and second moment bounded by $K^2/v$. Thus, using the fact that $|\exp(iy)- (1+iy -2^{-1}y^2)|\leq \min( 6^{-1}y^3, y^2)$, we can write
		\begin{align}&\left|\psi_{i}(t)- \left( 1 - \frac{K^2}{2v}t^2\right)\right| \leq  \frac{K^2}{v}t^2 & \left| e^{-t^2v^{-1}} - \left( 1 - \frac{K^2}{2v}t^2\right)\right| \leq  \frac{K^2}{v}t^2, \label{6.1}
		\end{align}
		provided $t\in [-c v^{1/2}/K, cv^{1/2}/K]$ for some sufficiently small $c>0$. Therefore, \eqref{comparison bound} follows from \eqref{6.1}, choosing $c>0$ appropriately.

		Using the claim \eqref{comparison bound}, and since there are at least $ v\left(1 - O\left(K^{-2}\right)\right) $ values of $i$ such that $|\phi_{i}(x)|\leq K$, for all $t\in [-c v^{1/2}/K, cv^{1/2}/K]$, we have
		\begin{align}
			\left| \prod_{|\phi_{i}(x)|\leq K} \psi_{i}(t)  \right| \lesssim \exp \left( - t^2 \left( 1 - O(K^{-2})\right) \right). \nonumber
		\end{align}
		Thus, using the trivial bound $|\psi_{i}(t)|\leq 1$ if  $|\phi_{i}(x)|\geq K$, and picking $K=1000$,  for all $t\in [-c_1 v^{1/2}, c_1v^{1/2}]$, we obtain
		$$ \left| \prod_{i} \psi_{i} (t)\right| \lesssim   \exp \left( - c_2 t^2  \right),$$
		for some absolute constant $c_2>0.$ Hence, if $1/\tau\leq c_1 v^{1/2}$, that is $\tau \geq c_1^{-1}v^{-1/2}$,  the integral in  \eqref{14} is bounded, thus
		$$ 	\mathbb{P}(|f(x)|\leq \tau) \lesssim  \tau.$$ If $\tau \leq c_1^{-1}v^{-1/2}$, we have
		$$ \mathbb{P}(|f(x)|\leq \tau) \leq	\mathbb{P}(|f(x)|\leq c_1^{-1}v^{-1/2} ) \lesssim v^{-1/2}.$$
		This concludes the proof of Lemma \ref{ small values of $f$}.
	\end{proof}
	\subsection{A concentration inequality for the nodal volume}
	Lemma \ref{ small values of $f$} allows us to control the nodal volume of $F_x$, as follows:
	\begin{lem}
		\label{anti-concentration}
		Let $F_x$ be as in \eqref{def of F_x} and, let $K>1$ be some parameter. Then, for all $0< Q \lesssim v^{1/2}/K$,  we have
		\begin{align}
			\mathbb{P}\left(	\mathcal{V}(F_x)>Q\right)\lesssim\frac{1}{Q K} + \frac{(QK)^2}{e^{2Q}} \nonumber
		\end{align}
	\end{lem}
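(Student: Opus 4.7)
The plan is to combine the doubling-index bound of Proposition~\ref{doubling index bound} with the Hal\'asz-type small-ball estimate of Lemma~\ref{ small values of $f$}, by introducing a cutoff level $M$ that separates a ``large sup'' regime from a ``small value'' regime for $f(x)$.

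First, mirroring the proof of Lemma~\ref{doubling index lem}, I would apply Proposition~\ref{doubling index bound} to $f_{1/T}$ on the unit ball $B = B_g(x,1)\times(0,1)\subset M\times\mathbb{R}$ to obtain $\mathcal{V}(F_x)\lesssim N(f^H_{1/T}, 2B^+)$. Consequently the event $\{\mathcal{V}(F_x)>Q\}$ entails the doubling inequality $\sup_{4B^+}|f^H_{1/T}| \geq e^{cQ}\sup_{2B^+}|f^H_{1/T}|$ for some $c>0$ depending only on $(M,g)$. Next, Taylor expanding $f^H(y,t)$ at $t=0$ via \eqref{f^h} (using $f^H(\cdot,0)=0$ and $\partial_t f^H(\cdot,0)=f$), and exploiting the uniform asymptotic $\lambda_i/T = 1+o(1)$ provided by $\rho(T)=o(T)$, I would compare $Tf^H_{1/T}(y,t)$ with $\sinh(t)F_x(y)$ on $4B^+$ and deduce the lower bound $\sup_{2B^+}|f^H_{1/T}|\gtrsim T^{-1}|f(x)|$. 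The doubling inequality then forces $|f(x)|\lesssim TMe^{-cQ}$ whenever $\sup_{4B^+}|f^H_{1/T}|\leq M$.

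With this implication in hand, I would decompose
\[
\mathbb{P}(\mathcal{V}(F_x)>Q)\leq \mathbb{P}\bigl(\sup_{4B^+}|f^H_{1/T}|>M\bigr) + \mathbb{P}\bigl(|f(x)|\leq CTMe^{-cQ}\bigr),
\]
handling the second term by Lemma~\ref{ small values of $f$} (which yields $\lesssim TMe^{-cQ}+v^{-1/2}$), and the first by Chebyshev's inequality combined with Lemma~\ref{sup bound}: the latter bounds the coefficients $v^{-1/2}\phi_i(\cdot/T)$ in the expansion of $F_x$ by $O(1/K)$ outside an exceptional set $\tilde A\subset M$ of volume $O(T^{-1-\kappa})$, which, together with a Sobolev-type passage from pointwise to supremum bounds as in the proof of Lemma~\ref{nu_T tight}, yields a tail estimate for $\sup_{4B^+}|f^H_{1/T}|$ carrying an extra factor of $1/K$. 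Choosing $M$ to balance the two contributions, and using the constraint $Q\lesssim v^{1/2}/K$ to absorb the $v^{-1/2}$ term into the main bound, the target estimate $1/(QK)+(QK)^2/e^{2Q}$ should emerge.

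The main obstacle will be the concentration bound on $\sup_{4B^+}|f^H_{1/T}|$ with the correct $K$-dependence. Since the $a_i$ are assumed only to have finite second moment, sub-Gaussian or Bernstein-type estimates are unavailable; one must instead carefully combine the deterministic sup-norm bound on the $\phi_i$ from Lemma~\ref{sup bound} with Chebyshev's inequality applied to an appropriate moment of $\sup|f^H_{1/T}|$, likely complemented by a truncation of the $a_i$ at level $K$ to exploit both regimes of the second-moment condition. A secondary technical point is to verify that the $o(1)$ error in the comparison $Tf^H_{1/T}\approx \sinh(t)F_x$ does not destroy the exponential gain $e^{cQ}$ coming from the doubling inequality; this will be ensured by restricting $Q$ to the admissible range $Q\lesssim v^{1/2}/K$.
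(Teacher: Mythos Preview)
Your overall strategy---combine the doubling-index bound with the small-ball estimate of Lemma~\ref{ small values of $f$}, decomposing according to a cutoff---is correct and essentially matches the paper's. The paper splits on $|f(x)|\lessgtr\tau$ rather than on $\sup_{4B^+}|f^H|\lessgtr M$, but the two decompositions are interchangeable via the relation $\tau\sim Me^{-cQ}$.

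However, you have misread the role of $K$. In the paper's argument, $K$ is \emph{not} earned through any concentration estimate; it is simply a free parameter inserted by choosing the cutoff $\tau=1/(QK)$ at the very end. The constraint $Q\lesssim v^{1/2}/K$ arises only because Lemma~\ref{ small values of $f$} requires $\tau\gtrsim v^{-1/2}$. Concretely, the paper shows
\[
\mathbb{P}(\mathcal{V}(F_x)>Q)\ \lesssim\ \tau+\frac{1}{\tau^2 e^{2Q}}
\]
for any $\tau\gtrsim v^{-1/2}$, and then substitutes $\tau=1/(QK)$. No $K$ appears anywhere before that substitution.

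Consequently, your ``main obstacle''---obtaining a $K$-dependent tail bound on $\sup_{4B^+}|f^H_{1/T}|$ via Lemma~\ref{sup bound}---is a phantom. With only a second-moment assumption on the $a_i$, Chebyshev applied to $\mathbb{E}\bigl[\|f^H\|_{L^\infty(4B^+)}^2\bigr]$ is all that is needed, and this expectation is $O(1)$ uniformly in $x\in M$: pass from $L^\infty$ to $L^2$ via elliptic regularity (since $f^H$ is harmonic on $M\times\mathbb{R}$), then compute the $L^2$-norm directly from \eqref{f^h} and Lemma~\ref{diagonal Weyl}. No appeal to Lemma~\ref{sup bound}, no exceptional set, no truncation of the $a_i$, and no $K$. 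Your proposed route through Lemma~\ref{sup bound} would in fact introduce an exceptional set not present in the statement, and would not in any case produce a $K$-improvement in the Chebyshev bound, since $\mathbb{E}[\sup|f^H|^2]$ is governed by the covariance structure and is insensitive to the pointwise size of individual $\phi_i$.

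Your secondary concern about the Taylor comparison $Tf^H_{1/T}\approx\sinh(t)F_x$ is also unnecessary: the paper only uses the crude one-sided bound $\|f^H\|_{L^\infty(2B^+)}\gtrsim|f(x)|$ (up to harmless powers of $T$ that cancel against the second-moment estimate), not any two-sided asymptotic.
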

	\begin{proof}
		
		Given some $\tau>0$ , we write
		\begin{align}
			\mathbb{P}(	\mathcal{V}(F_x) > Q) = \mathbb{P}(	\mathcal{V}(F_x) > Q \hspace{3mm}\text{and} \hspace{3mm}|f(x)|<\tau ) + \mathbb{P}(	\mathcal{V}(F_x) > Q \hspace{3mm}\text{and} \hspace{3mm} |f(x)|\geq \tau). \label{10}
		\end{align}
		The first term on the r.h.s. of \eqref{10} can be bounded, via Lemma \eqref{ small values of $f$}, as
		\begin{align}
			\mathbb{P}(	\mathcal{V}(F_x) \geq Q \hspace{3mm}\text{and} \hspace{3mm}|f(x)|<\tau ) \leq \mathbb{P} ( |f(x)|\leq \tau )\lesssim \tau, \label{12}
		\end{align}
		provided $\tau \gtrsim v(T)^{-1/2}$.
		Let us now consider the second term on the r.h.s. of \eqref{10}, thus, we may assume that $|f(x)|>\tau$. Let $B= B(x,1/T)\times (-1/T,1/T)$,	by Proposition \ref{doubling index bound}, maintaining the same notation, we have
		\begin{align}
			\mathcal{V}(F_x)\lesssim   \log \frac{||f^H||_{L^{\infty}(4B^+)}}{||f^H||_{L^{\infty}(2B^+)}} \leq \log \frac{||f^H||_{L^{\infty}(4B^+)}}{|f(x)|}, \label{inequalit nodal volume}
		\end{align}
		thus
		\begin{align}
			\mathbb{P}(	\mathcal{V}(F_x) \geq Q \hspace{3mm}\text{and} \hspace{3mm} |f(x)| \geq \tau) \leq 	\mathbb{P}\left( ||f^H||_{L^{\infty}(2B^+)} > \tau e^Q\right). \label{13}
		\end{align}
		
		Now we claim the following:
		\begin{align}
			\label{claim 1}
			\mathbb{E}[||f^H||^2_{L^{\infty}(4B^+)}] =O(1).
		\end{align}
		Indeed, writing $f^H_{1/T}(\cdot)= f^H(T^{-1}\cdot)$ and $\tilde{B}= B(x,1)\times (-1,1)$, due to the fact that supremum norm is scale invariant, and upon using elliptic regularity, we have
		$$ ||f^H||_{L^{\infty}(4B^+)}\leq ||f^H_{1/T}||_{L^{\infty}(4\tilde{B})}\lesssim  ||f^H_{1/T}||_{L^2(6\tilde{B})}.$$
		Therefore, using the formula \eqref{f^h} and exchanging the expectation with the sum, we see that
		$$	\mathbb{E}[||f^H||^2_{L^{\infty}(4B^+)}]\lesssim  \mathbb{E}[||f^H_{1/T}||^2_{L^2(6\tilde{B})}] \lesssim v(T)^{-1}\int_{B(0,6)}\sum_{\lambda_i} |\phi_{i}(T^{-1}x)|^2 dx,$$
		thus \eqref{claim 1} follows from Lemma \ref{diagonal Weyl}.
		Using  \eqref{claim 1} together with Chebyshev's inequality, \eqref{12} and \eqref{13}, we obtain
		\begin{align} \nonumber
			\mathbb{P}(\mathcal{V}(F_x)\geq Q)\lesssim \tau + \frac{1}{\tau^2 e^{2Q}}
		\end{align}
		provided that $\tau \gtrsim v(T)^{-1/2}$. Hence, Lemma \ref{anti-concentration} follows by taking $\tau= 1/QK$, which gives $Q\leq v(T)^{1/2}/K$.
	\end{proof}
	\subsection{Concluding the proof of Proposition \ref{anti-concentration long}}
	We are finally ready to conclude the proof of Proposition \ref{anti-concentration long}:
	\begin{proof}[Proof of Proposition \ref{anti-concentration long}]
		By Corollary \ref{doubling index corollary}, we may assume that
		$$\mathcal{V}(F_x)\lesssim T 2^{-\log T/\log\log t}=:h(T).$$
		Therefore, letting $g= t \log t$ and integrating by parts, we have
		\begin{align} \label{17.1}
			\mathbb{E}[g(\mathcal{V}(F_x))]&= \int_0^{h(T)} g(t)d\mathbb{P}\left( \mathcal{V}(F_x)>t\right)=  \int_0^{h(T)}g'(t)\mathbb{P}(\mathcal{V}(F_x)> t)dt \nonumber \\
			&= \int_1 ^{h(T)} g'(t)\mathbb{P}(\mathcal{V}(F_x)> t)dt +O(1).
		\end{align}
		Let $K=K(t)>1$ be some parameter to be chosen later,	using Lemma \ref{anti-concentration}, the right hand  side of \eqref{17.1} is bounded by
		\begin{align} \label{6.3.1}
			\mathbb{E}[g(\mathcal{V}(F_x))]\lesssim \int_1^{h(T)}g'(t) \frac{1}{t K(t)} dt  + \int_1^{h(T)}g'(t) \frac{tK(t)}{e^{2t}} dt  +O(1),
		\end{align}
		provided $h(T)\leq v^{1/2}(T)/K(h(T))$. Taking $K(t)= \log (t)^3 $, we have $K(h(T))\gtrsim (\log T)^3$, thus, using \eqref{defv} and the assumption $v(T)\geq T^2/\log T$, we have
		$$ h(T)\lesssim \frac{T^{\frac{n-1}{2}}}{(\log T)^3}\leq v(T)^{1/2}/K(h(T)).$$
		Hence, Proposition \ref{anti-concentration long} follow from \eqref{6.3.1}.
	\end{proof}
	
	\section{Proof of Theorem \ref{main thm}}
	Before concluding the proof of Theorem \ref{main thm}, we need two standard lemmas.
	\subsection{Preliminary lemmas}
	For the reader's convenience, we include the proofs of the following two lemmas.
	\begin{lem}
		\label{locality}
		Let $F_x$ be as in \eqref{def of F_x} and $\omega_n$ be the volume of the ball of radius $1$ in $\mathbb{R}^n$ . Then, we have
		\begin{align}
			\mathcal{V}(f)= \frac{2^n T}{\omega_{n}}\left(1 +o_{T\rightarrow \infty}(1)\right)\int_M \mathcal{V}(F_x)dV_g(x) . \nonumber
		\end{align}
	\end{lem}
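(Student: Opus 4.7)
The plan is to reduce the global nodal volume $\mathcal{V}(f)$ to an average of local nodal volumes via Fubini's theorem, identifying the local nodal volume with $\mathcal{V}(F_x)$ after rescaling.

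First, I would relate $\mathcal{V}(F_x)$ to a geodesic ball quantity. Since $F_x(y) = f(\exp_x(y/T))$, and the exponential map at $x$ is a $(1+o(1))$-bi-Lipschitz equivalence between $B(0,1/(2T)) \subset T_x M$ and $B_g(x,1/(2T)) \subset M$ as $T \to \infty$ (uniformly in $x$ by compactness of $M$ and smoothness of $g$), the $(n-1)$-dimensional Hausdorff measure transforms by a factor $T^{n-1}$ times a metric distortion factor:
\[
\mathcal{V}(F_x) = T^{n-1}(1+o(1))\,\mathcal{V}\!\left(f,B_g(x,1/(2T))\right),
\]
uniformly for $x\in M$.

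Next, I would apply Fubini's theorem, writing $\mathcal{V}(f,B_g(x,r)) = \int_{f^{-1}(0)} \mathbf{1}\{d_g(x,y)<r\}\,d\mathcal{H}^{n-1}(y)$ and swapping the order of integration:
\[
\int_M \mathcal{V}\!\left(f, B_g(x,1/(2T))\right)dV_g(x) = \int_{f^{-1}(0)} \vol\!\left(B_g(y,1/(2T))\right) d\mathcal{H}^{n-1}(y).
\]
Standard Riemannian geometry gives $\vol(B_g(y,r)) = \omega_n r^n(1+o(1))$ as $r\to 0$, uniformly in $y\in M$ (again by compactness). Taking $r = 1/(2T)$ yields
\[
\int_M \mathcal{V}\!\left(f,B_g(x,1/(2T))\right)dV_g(x) = \frac{\omega_n}{(2T)^n}(1+o(1))\,\mathcal{V}(f).
\]

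Combining the two displays:
\[
\int_M \mathcal{V}(F_x)\,dV_g(x) = T^{n-1}\cdot \frac{\omega_n}{(2T)^n}(1+o(1))\,\mathcal{V}(f) = \frac{\omega_n}{2^n T}(1+o(1))\,\mathcal{V}(f),
\]
which on rearrangement is the claim.

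The only nontrivial point is the uniformity of the two $(1+o(1))$ error terms, one from the Jacobian of $\exp_x$ (distortion of the $(n-1)$-Hausdorff measure under pulling $f$ back to $\R^n$) and one from the volume asymptotics for small geodesic balls. Both are uniform in $x$ by compactness of $M$ and the smoothness (even analyticity) of the metric $g$; this is the only potential obstacle, and it is routine.
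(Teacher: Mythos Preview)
Your proof is correct and follows essentially the same approach as the paper's: both use the Fubini identity $\int_M \mathcal{V}(f,B_g(x,1/(2T)))\,dV_g(x)=\int_{f^{-1}(0)}\vol_g(B_g(y,1/(2T)))\,d\mathcal{H}^{n-1}(y)$, the small-ball volume asymptotic $\vol_g(B_g(y,1/(2T)))=\omega_n(2T)^{-n}(1+o(1))$, and the rescaling relation $\mathcal{V}(F_x)=T^{n-1}(1+o(1))\,\mathcal{V}(f,B_g(x,1/(2T)))$. Your explicit remark on the uniformity of the $o(1)$ terms via compactness is a point the paper leaves implicit.
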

	
	\begin{proof}
		First, we observe that we may write
		\begin{align} \label{6.2.1}
			\mathcal{V}\left(f, B_g\left(x,\frac{1}{2T}\right)\right)= \int_{f^{-1}(0)} \mathds{1}_{y\in B_g(x,1/(2T))}d\mathcal{H}^{n-1}(y).
		\end{align}
		Then, integrating both sides of \eqref{6.2.1}  and using Fubini's Theorem, we have
		\begin{align}
			\label{6.2.4}
			\int_M   \mathcal{V}\left(f, B_g\left(x,\frac{1}{2T}\right)\right) dV_g(x)= \int_{f^{-1}(0)} \vol_g\left( B_g\left(y,\frac{1}{2T}\right)\right) d\mathcal{H}^{n-1}(y).
		\end{align}
		Now, we observe that, in light of the definition of $F_x$ in section \ref{asymptotic gaussianity}, we have
		$$  \mathcal{V}\left(f, B_g\left(x,\frac{1}{2T}\right)\right) \cdot T^{n-1}(1+o_{T\rightarrow \infty}(1))=\mathcal{V} (F_x).$$
		Thus, the LHS of \eqref{6.2.4} is
		\begin{align}
			\label{6.2.2}
			\int_M   \mathcal{V}\left(f, B_g\left(x,\frac{1}{2T}\right)\right) dV_g(x)= \frac{1}{T^{n-1}} \left(1 +o_{T\rightarrow \infty}(1)\right)\int_M \mathcal{V}(F_x)dx
		\end{align}
		Moreover, for all $y\in M$, we also have
		$$\vol_g\left( B_g\left(y,\frac{1}{2T}\right)\right) = \vol_{\mathbb{R}^n} \left(B\left(0,\frac{1}{2T}\right)\right)\left( 1 + O\left( T^{-1}\right)\right)= \frac{\omega_{n}}{(2T)^n}(1 +O(T^{-1})).$$
		Thus, the r.h.s. of \eqref{6.2.4} is
		\begin{align}
			\label{6.2.3} \int_{f^{-1}(0)} \vol_g\left( B_g\left(y,\frac{1}{2T}\right)\right) d\mathcal{H}^{n-1}(y)=  \frac{\omega_n}{(2T)^n}(1 + O( T^{-1})) \mathcal{V}(f)
		\end{align}
		Hence, Lemma \ref{locality} follows from  inserting \eqref{6.2.2} and \eqref{6.2.3} into \eqref{6.2.4}.
	\end{proof}
	
	\begin{lem}
		\label{Kac-Rice}
		Let $F_{\mu}$ be as in section \ref{asymptotic gaussianity}, and $\omega_n$ be the volume of the unit ball in $\mathbb{R}^n$. Then, we have
		$$ \mathbb{E}[ \mathcal{V}(F_{\mu})]= 2^{-n}\omega_n \left( \frac{4\pi}{n}\right)^{1/2} \frac{\Gamma\left( \frac{n+1}{2}\right)}{\Gamma \left(\frac{n}{2}\right)}.$$
	\end{lem}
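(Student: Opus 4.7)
The plan is to apply the Kac-Rice formula to the smooth, stationary, isotropic centred Gaussian field $F_{\mu}$ on $\tfrac12 B_0$ and then simplify the resulting constants using the explicit covariance. First I would verify the non-degeneracy hypotheses: the Gaussian vector $F_{\mu}(y)$ is non-degenerate since $\mathrm{Var}(F_{\mu}(y))=K(0)=|S^{n-1}|>0$, and $\nabla F_{\mu}(y)$ has positive-definite covariance, as computed below. The covariance kernel (a Bessel function) is real analytic, so the Kac-Rice formula applies in the standard form
\begin{equation*}
\mathbb{E}[\mathcal{V}(F_{\mu})] \;=\; \int_{\tfrac12 B_0}\mathbb{E}\!\left[\,|\nabla F_{\mu}(y)|\,\big|\, F_{\mu}(y)=0\right]\,\phi_{F_{\mu}(y)}(0)\,dy.
\end{equation*}

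Second, I would exploit stationarity and isotropy to reduce the integrand to a constant. Stationarity gives $\mathrm{Cov}(F_{\mu}(0),\partial_i F_{\mu}(0))=-\partial_i K(0)=0$, so $F_{\mu}(0)$ and $\nabla F_{\mu}(0)$ are independent, and the conditional expectation collapses to $\mathbb{E}[\,|\nabla F_{\mu}(0)|\,]$. Isotropy then forces $\mathrm{Cov}(\nabla F_{\mu}(0))=c\,I_n$ for some $c>0$, with
\begin{equation*}
c \;=\; -\partial_1^2 K(0) \;=\; 4\pi^2\!\int_{S^{n-1}}\xi_1^{\,2}\,d\xi \;=\; \frac{4\pi^2|S^{n-1}|}{n},
\end{equation*}
where the last equality follows from $\sum_i \xi_i^2 = 1$ on $S^{n-1}$. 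Writing $\nabla F_{\mu}(0)=\sqrt{c}\,Z$ with $Z\sim N(0,I_n)$ and using the standard computation $\mathbb{E}[|Z|]=\sqrt{2}\,\Gamma(\tfrac{n+1}{2})/\Gamma(\tfrac{n}{2})$ (via integration in spherical coordinates), I obtain
\begin{equation*}
\mathbb{E}[\,|\nabla F_{\mu}(0)|\,] \;=\; \sqrt{\frac{8\pi^2|S^{n-1}|}{n}}\cdot\frac{\Gamma\!\left(\tfrac{n+1}{2}\right)}{\Gamma\!\left(\tfrac{n}{2}\right)}.
\end{equation*}

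Third, the Gaussian density of $F_{\mu}(0)$ at $0$ is $\phi_{F_{\mu}(0)}(0)=(2\pi|S^{n-1}|)^{-1/2}$. Multiplying and simplifying, the $|S^{n-1}|$ factors cancel and I get the Kac-Rice density
\begin{equation*}
\phi_{F_{\mu}(0)}(0)\cdot \mathbb{E}[\,|\nabla F_{\mu}(0)|\,] \;=\; \left(\frac{4\pi}{n}\right)^{1/2}\frac{\Gamma\!\left(\tfrac{n+1}{2}\right)}{\Gamma\!\left(\tfrac{n}{2}\right)}.
\end{equation*}
Finally, integrating this constant over $\tfrac12 B_0$, whose Lebesgue volume is $2^{-n}\omega_n$, yields the claimed formula. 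There is no real obstacle: the only non-routine point is the justification of Kac-Rice, which is completely standard for a smooth, non-degenerate Gaussian field and can be cited directly from, e.g., Azaïs-Wschebor or the references already used elsewhere in the paper; the rest is a routine computation of $|S^{n-1}|$-integrals and Gaussian moments.
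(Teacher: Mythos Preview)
Your proof is correct and follows essentially the same route as the paper: apply the Kac--Rice formula (citing Aza\"is--Wschebor), use stationarity to reduce to a constant integrand, use the independence of $F_\mu(0)$ and $\nabla F_\mu(0)$ to drop the conditioning, and integrate over $\tfrac12 B_0$. The only difference is that the paper outsources the evaluation of $\phi_{F_\mu(0)}(0)\cdot\mathbb{E}[|\nabla F_\mu(0)|]$ to a reference (\cite[Proposition~4.1]{RW08}), whereas you carry out that Gaussian moment computation explicitly; your normalization $K(0)=|S^{n-1}|$ differs from the paper's stated $\mathbb{E}[|F_\mu|^2]=1$, but since the Kac--Rice density is invariant under rescaling of the field this is harmless and your final constant is correct.
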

	\begin{proof}
		Since the support of $\mu$, being the unit sphere, is not contained in an hyperplane,  the distribution of $(F_{\mu},\nabla F_{\mu})$ is non-degenerate. Thus, we may apply the Kac-Rice formula \cite[Theorem 6.1]{AW09}, to see that
		\begin{align}
			\label{7.1.1}
			\mathbb{E}[ \mathcal{V}(F_{\mu})] =\int_{2^{-1}B_0} \mathbb{E}[ |\nabla F_{\mu}(y)|| F_{\mu}(y)=0] \varphi_{F_{\mu}(y)}(0)dy,
		\end{align}
		where $\varphi_{F_{\mu}(y)}(0)$ is the density of $F_{\mu}(y)$ at the point $0$. Since $\mathbb{E}[|F_{\mu}(y)|^2]=1$, $\nabla F_{\mu}$ and $F_{\mu}$ are independent, and bearing in mind that $F_{\mu}$ is stationary (that is, $F_{\mu}(y)$ has the same distribution as $ F_{\mu}(0)$), we have
		\begin{align}\mathbb{E}[| \nabla F_{\mu}(y)|| F_{\mu}(y)=0] \varphi_{F_{\mu}(y)}(0)= \mathbb{E}[\nabla F_{\mu}(0)]\varphi_{F_{\mu}(0)}(0). \label{7.1}
		\end{align}
		The latter can be computed explicitly, see for example \cite[Proposition 4.1]{RW08}, to be
		\begin{align}
			\label{7.1.2} \mathbb{E}[|\nabla F_{\mu}(0)|]\varphi_{F_{\mu}(0)}(0)= \left( \frac{4\pi}{n}\right)^{1/2} \frac{\Gamma\left( \frac{n+1}{2}\right)}{\Gamma \left(\frac{n}{2}\right)}.
		\end{align}
		Hence, Lemma \ref{Kac-Rice} follows by inserting \eqref{7.1.2} into \eqref{7.1.1} via \eqref{7.1}.
	\end{proof}
	\subsection{Concluding the proof of Theorem \ref{main thm}}
	
	\begin{proof}[Proof of Theorem \ref{main thm}]
		Thanks to Lemma \ref{locality} and Fubini's Theorem, we have
		\begin{align}
			\label{7.2.1}\mathbb{E}[\mathcal{V}(f)]= \frac{2^n T}{\omega_n} \left(1 +o_{T\rightarrow \infty}(1))\right)\int_M \mathbb{E}[\mathcal{V}(F_x)]dV_g(x)
		\end{align}
		Thanks to Propositions \ref{asymptotic Gaussianity}, Proposition  \ref{anti-concentration long}, whose assumptions are satisfied thanks to the assumptions of Theorem \ref{main thm}, and the Dominated Convergence Theorem, we have
		\begin{align}
			\label{7.2.2}
			\mathbb{E}[\mathcal{V}(F_x)]= \mathbb{E}[\mathcal{V}(F_{\mu})](1+o_{T\rightarrow \infty}(1)),
		\end{align}
		uniformly for all $x\in M$ outside a subset $A\subset M$ of volume at most $O(\log T/T)$.
		Moreover, thanks to Corollary \ref{doubling index corollary}, we have
		$$ \mathcal{V}(F_x) \lesssim T 2^{-\log T/\log\log T},$$
uniformly		for all $x$ outside a subset $\tilde{A}\subset M$ of volume at most $O(T^{-1-\kappa})$ with $\kappa>0$ independent of $T$.

		Thus, using \eqref{7.2.2} and Lemma \ref{doubling index lem}, we may rewrite the integral in \eqref{7.2.1} as
		\begin{align}
			\label{7.2.3}
			\int_M \mathbb{E}[\mathcal{V}(F_x)]dV_g(x)&= \int_{M\backslash A} \mathbb{E}[\mathcal{V}(F_x)]dV_g(x) + \int_{A\cap \tilde{A}} \mathbb{E}[\mathcal{V}(F_x)]dV_g(x)  + \int_{A\backslash \tilde{A}} \mathbb{E}[\mathcal{V}(F_x)]dV_g(x) \nonumber \\
			&= \vol(M\backslash A)\mathbb{E}[\mathcal{V}(F_{\mu})](1+o_{T\rightarrow \infty}(1)) +o_{T\rightarrow \infty}(1) \nonumber \\
			&= \vol(M)\mathbb{E}[\mathcal{V}(F_{\mu})](1+o_{T\rightarrow \infty}(1)) +o_{T\rightarrow \infty}(1).
		\end{align}
		Inserting \eqref{7.2.3} into \eqref{7.2.1} and using Lemma \ref{Kac-Rice} we obtain
		\begin{align} \nonumber
			\mathbb{E}[\mathcal{V}(f)]&= \frac{2^n}{\omega_n}\vol(M)\mathbb{E}[\mathcal{V}(F_{\mu})] \cdot  \left(T +o_{T\rightarrow \infty}(T)\right) \\
			&= \vol(M)\left(\frac{4\pi}{n} \right)^{1/2}  \frac{\Gamma\left( \frac{n+1}{2} \right)}{\Gamma\left( \frac{n}{2}  \right)}T
			+o_{T\rightarrow\infty}(T),  \nonumber
		\end{align}
		as required.
	\end{proof}
	
	\appendix
	
	\section{Important case: random non-Gaussian spherical harmonics}
	\label{sec:sphere}
	
	The sequence of Laplace eigenvalues on $\mathbb{S}^2$, the two dimensional round unit sphere, is given by $\{\ell(\ell+1)\}$, where $\ell>0$ runs through the integers. Each eigenvalue has multiplicity $2\ell+1$ and the corresponding  Laplace eigenfunctions  are the restrictions of homogeneous harmonic polynomials of degree  $\ell$ to $\mathbb{S}^2$, also known as spherical harmonics.
	
	There exists a \textquotedblleft distinguished\textquotedblright orthonormal base for the space of spherical harmonics of degree $\ell$, see for example \cite[Theorem 4.9]{Zbook},
	\begin{align}
		\label{particular base}
		Y_{\ell,m}(\theta, \varphi)= \left( (2\ell +1) \frac{(\ell-m)!}{(\ell+m)!}\right)^{1/2}P_{\ell}^m(\cos(\theta))e^{im\varphi},
	\end{align}
	where $m=-\ell,...,\ell$, $(\theta, \varphi)$ are polar-coordinates on $\mathbb{S}^2$ and $P_{\ell}^m(x)$ are the associated Legendre polynomials. Therefore, the band-limited functions on $\mathbb{S}^2$ with spectral parameter $T=\ell(\ell+1)$ and width of the energy window$\rho(T)=1$, such as $f_{\ell}$ in \eqref{eq:fl spher har}, can be expressed as a linear combinations of the functions in \eqref{particular base}.
	
	We observe that, under the notation \eqref{def of F_x} of section \ref{local Weyl's law}, thanks to the Funk-Hecke formula and the Hilb’s asymptotics for Legendre polynomials (see e.g. \cite[Claim 2.7 and Claim 2.9]{NS09} and references therein), one has
	$$ \mathbb{E}[F_{x,\ell}(y)F_{x,\ell}(y')]= J_0(|y-y'|) +o_{\ell \rightarrow \infty}(1),$$
	and we can also differentiate both sides any arbitrary finite number of times.
	
	In this section, we prove the following theorem:
	
	\begin{thm}
		\label{thm sphere}
		Let
		$$f_{\ell}(x)= \sum_{m=-\ell}^{\ell}a_mY_{\ell,m}(x),$$
		where the $Y_{\ell,m}$ are given in \eqref{particular base} and assume that $a_{\ell,m}$ are i.i.d. $\pm 1$ Bernoulli random variables. Then
		$$	\E[\mathcal{V}(f_{\ell})] = \frac{2}{\sqrt{2}}\pi \ell +o_{\ell \rightarrow \infty}(\ell). $$
	\end{thm}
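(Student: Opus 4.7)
The proof follows the architecture of Theorem \ref{main thm}: localize via Lemma \ref{locality}, establish asymptotic Gaussianity of the rescaled field $F_x$, establish uniform integrability of $\mathcal{V}(F_x)$, and conclude by dominated convergence with the aid of Lemma \ref{Kac-Rice}. The final constants are obtained by matching conventions, the monochromatic limit on $\mathbb{S}^2$ having covariance $J_0(|y-y'|)$ rather than $J_0(2\pi|y-y'|)$ as in \eqref{eq:covar mono}.

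First, Lemma \ref{locality} transfers without change, reducing the problem to proving that $\int_{\mathbb{S}^2}\E[\mathcal{V}(F_x)]\,dV_g(x) \to \vol(\mathbb{S}^2)\,\E[\mathcal{V}(F_\mu)]$. The covariance convergence $F_x \to F_\mu$ is furnished, as recalled in the excerpt, by the Funk--Hecke formula and Hilb's asymptotics. Since $|a_m|\equiv 1$ is bounded, the multivariate Lindeberg CLT reduces to the deterministic requirement $\max_m|Y_{\ell,m}(x)|^2 = o\bigl(\sum_{m'}|Y_{\ell,m'}(x)|^2\bigr)$ for $x$ outside an exceptional set of vanishing volume. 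For the basis \eqref{particular base} this fails only at the two poles of the standard spherical coordinate system, where the zonal $Y_{\ell,0}$ saturates Sogge's $L^\infty$ bound; discarding geodesic balls of radius $(\log\ell)^{-c}$ around each pole removes a set of volume $o(1)$ and yields the Lindeberg condition elsewhere, by Hilb's asymptotics. Tightness proceeds exactly as in Lemma \ref{nu_T tight}, so one concludes $\mathcal{V}(F_x)\xrightarrow{d}\mathcal{V}(F_\mu)$ on the complement of the exceptional set.

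The central obstacle is the uniform integrability of $\mathcal{V}(F_x)$, since Proposition \ref{anti-concentration long} does not apply here ($v(T)\asymp \ell$, far from the needed $T^2/\log T$). The key structural observation is that every eigenfunction summand of $f_\ell$ shares the same eigenvalue $\lambda_\ell=\sqrt{\ell(\ell+1)}$, so the harmonic lift \eqref{f^h} factorizes,
\[
f^H(x,t) \;=\; \frac{\sinh(\lambda_\ell t)}{\lambda_\ell}\,f_\ell(x),
\]
and Proposition \ref{doubling index bound} therefore controls $\mathcal{V}(F_x)$ by a genuine doubling ratio of $f_\ell$ on the sphere alone, with no auxiliary $(x,t)$ variable:
\[
\mathcal{V}(F_x) \;\lesssim\; \log\frac{\sup_{B_g(x,2/T)}|f_\ell|}{\sup_{B_g(x,1/T)}|f_\ell|}.
\]
Combined with the second-moment bound $\E\bigl[\sup_{B_g(x,2/T)}|f_\ell|^2\bigr]\lesssim \ell$ (via elliptic regularity and Lemma \ref{diagonal Weyl}, as in Lemma \ref{elliptic comparison}) and a sharpened anti-concentration estimate for the Rademacher sum $f_\ell(x)=\sum_m a_m Y_{\ell,m}(x)$, one adapts the argument of Lemma \ref{anti-concentration} to obtain $\mathbb{P}(\mathcal{V}(F_x)>Q)\lesssim e^{-cQ}$, which yields the required uniform integrability.

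The hardest step is the strengthening of Lemma \ref{ small values of $f$} just invoked: one needs $\mathbb{P}(|f_\ell(x)|\leq \tau\,\sigma(x))\lesssim \tau$ for $\tau$ as small as $\ell^{-1/2+\epsilon}$, rather than the $\tau+\ell^{-1/2}$ produced by the generic Halász bound. For $\pm 1$ Bernoulli coefficients and generic $x\in\mathbb{S}^2$ this follows from an Esseen-type analysis of the characteristic function $\prod_m \cos(t\,Y_{\ell,m}(x))$, which exhibits Gaussian decay on the interval $|t|\lesssim \ell^{1/2}$ thanks to the absence of additive resonances in the family $\{Y_{\ell,m}(x)\}_m$; this is precisely where the Bernoulli assumption of the appendix is essential. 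Dominated convergence then propagates the convergence in distribution to $\E[\mathcal{V}(F_x)]\to\E[\mathcal{V}(F_\mu)]$ uniformly outside the exceptional set, and substitution into Lemma \ref{locality} together with Lemma \ref{Kac-Rice} (adapted to the $J_0(|\cdot|)$ normalization) gives $\E[\mathcal{V}(f_\ell)]=\sqrt{2}\pi\ell+o(\ell)$, as required.
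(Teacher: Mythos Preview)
Your overall architecture is correct, and you rightly isolate uniform integrability of $\mathcal{V}(F_x)$ as the obstacle. But your proposed route to it does not close.

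The ``sharpened'' bound $\mathbb{P}(|f_\ell(x)|\le\tau\sigma(x))\lesssim\tau$ for $\tau\ge\ell^{-1/2+\epsilon}$ is not a sharpening: Lemma~\ref{ small values of $f$} already gives $\mathbb{P}(|f_\ell(x)|\le\tau\sigma(x))\lesssim\tau+\ell^{-1/2}\lesssim\tau$ in that range, and your Esseen argument (Gaussian decay of $\prod_m\cos(tY_{\ell,m}(x))$ on $|t|\lesssim\ell^{1/2}$) reproduces exactly this and no more. Feeding it into the scheme of Lemma~\ref{anti-concentration} with $\tau=e^{-2Q/3}$ produces $\mathbb{P}(\mathcal{V}(F_x)>Q)\lesssim e^{-cQ}$ only for $Q\lesssim\log\ell$; beyond that the best available is $\mathbb{P}(\mathcal{V}(F_x)>Q)\lesssim\ell^{-1/2+\epsilon}$. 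Combined with the deterministic bound $\mathcal{V}(F_x)\lesssim\ell$ (or even Logunov's refinement $\ell\,2^{-\log\ell/\log\log\ell}$ from Corollary~\ref{doubling index corollary}), the tail contributes $\gtrsim\ell^{1/2}$ to $\E[\mathcal{V}(F_x)]$, so uniform integrability fails. To rescue a pointwise-in-$x$ argument you would need anti-concentration down to $\tau\sim\ell^{-A}$ for some $A>1$, which for Rademacher sums requires genuine arithmetic non-structure of the coefficients $\{Y_{\ell,m}(x)\}_m$ in the inverse Littlewood--Offord sense; this is neither proved nor cited, and is certainly not implied by Gaussian decay of the characteristic function on $|t|\lesssim\ell^{1/2}$.

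The paper abandons pointwise uniform integrability and instead bounds the \emph{joint} second moment $\int_{\mathbb{S}^2\times\Omega}\mathcal{V}(F_x)^2\,d\sigma$ (Corollary~\ref{log-integrability corollary}). Via \eqref{inequalit nodal volume} this reduces to controlling $\int_{\mathbb{S}^2\times\Omega}|\log|f_\ell||^2\,d\sigma$; slicing in spherical coordinates and freezing $\theta$ turns $f_\ell(\varphi,\theta)$ into a Rademacher Fourier series in $\varphi$, to which the Nazarov--Nishry--Sodin log-integrability theorem (Lemma~\ref{log integrability}) applies uniformly in $\theta$. That theorem is the real point of entry for the Bernoulli hypothesis, and it is a substantially deeper input than any single-point characteristic-function estimate. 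Convergence in distribution on the product space then upgrades to convergence of expectations by dominated convergence, and the rest proceeds as you outlined.
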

	
	The main crux of extending the proof of Theorem \ref{main thm} to the $2$d-case is the failure of Proposition \ref{anti-concentration long} in $2$d. However, in the setting of Theorem \ref{thm sphere}, it is possible to overcome the said obstacle by invoking the following result due to Nazarov, Nishry and Sodin \cite[Corollary 2.2]{NNS13} on the log-integrability of random Fourier series with Bernoulli coefficients:
	\begin{lem}[{~\cite[Corollary 2.2]{NNS13}}]
		\label{log integrability}	
		Let $g:\mathbb{S}^{1}\times \Omega \rightarrow \mathbb{\mathbb{C}}$ be a random Fourier series given by
		$$g(\varphi)= \sum_{m=-\infty}^{\infty} a_m b_m e(m\varphi),$$
		where $\{a_m\}_{m\in\Z}$ are i.i.d. $\pm 1$ Bernoulli random variables defined on the probability space $\Omega$ and
		$\{b_{m}\}_{m\in\Z}$ is a sequence of (deterministic) complex-valued coefficients
		so that $$||g||_{L^2(\mathbb{S}^{1}\times \Omega)}=1.$$
		Then, for every integer $p>0$, there exists a constant $C=C(p)>0$ such that
		$$\int\limits_{\mathbb{S}^{1}\times \Omega} \left|\log |g| \right|^p d\nu \leq C,$$
		where $\nu$ is the product measure of the Lebesgue measure on $\mathbb{S}^{1}$ and $\mathbb{P}(\cdot)$ on $\Omega$.
	\end{lem}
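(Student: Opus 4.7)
The plan is to estimate the contributions to $\int_{\mathbb{S}^1\times\Omega}|\log|g||^p\,d\nu$ coming from $\{|g|\geq 1\}$ and from $\{|g|<1\}$ separately, with the substance concentrated in the latter.

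For the upper tail the argument is soft: for every fixed $\varphi\in\mathbb{S}^1$, $g(\varphi)=\sum_m a_mb_m e(m\varphi)$ is a Rademacher sum with $\mathbb{E}_\Omega|g(\varphi)|^2=\sum_m|b_m|^2=1$, so Khintchine's inequality (equivalently, a Hoeffding bound) gives $\|g(\varphi)\|_{L^q(\Omega)}\lesssim_q 1$ uniformly in $\varphi$. Since $(\log^+x)^p\lesssim_p x^{1/2}$ for $x\geq 1$, Fubini together with $\int_\nu|g|^{1/2}\,d\nu\leq\bigl(\int_\nu|g|^2\,d\nu\bigr)^{1/4}=1$ yields $\int(\log^+|g|)^p\,d\nu\lesssim_p 1$ at once.

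For the lower tail, the layer-cake formula
\[
\int(\log^-|g|)^p\,d\nu=p\int_0^\infty t^{p-1}\,\mathbb{P}_\nu\bigl(|g|<e^{-t}\bigr)\,dt
\]
reduces matters to a joint small-ball estimate of the form $\mathbb{P}_\nu(|g|<\epsilon)\lesssim|\log\epsilon|^{-(p+1)}$; any polynomial decay in $|\log\epsilon|$ would suffice. The first attempt is to apply Halász's inequality (Lemma \ref{Halas'z}) fibrewise in $\varphi$ to the complex Rademacher sum $g(\varphi)=\sum_m a_m c_m(\varphi)$ with $c_m(\varphi):=b_me(m\varphi)$. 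This succeeds in two clean extremes: when $\max_m|b_m|$ is small, the characteristic function $\prod_m|\cos(tc_m(\varphi))|$ decays fast enough to give $\mathbb{P}_\Omega(|g(\varphi)|<\epsilon)\lesssim\epsilon$ uniformly in $\varphi$; and when some $|b_{m_0}|$ is close to $1$, the variable $|g|$ concentrates near the deterministic value $|b_{m_0}|>0$ and small-ball probabilities are negligible.

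The \emph{main obstacle} is the intermediate regime, in which several $|b_m|$ are of comparable size but none dominates, and conspiracies between the phases $e(m\varphi)$ can defeat the Halász bound at particular values of $\varphi$. The remedy is to trade pointwise anti-concentration in $\varphi$ for the joint averaging $d\nu$. Concretely, I would dyadically split the spectrum as $g=\sum_k g^{(k)}$ with $g^{(k)}$ supported on frequencies $|m|\in[2^k,2^{k+1})$, apply a blockwise Salem--Zygmund bound (after a truncation that absorbs the slow $L^2$-convergence of $\sum_m|b_m|^2$) to obtain sub-Gaussian control of $\|g(\cdot,\omega)\|_\infty$ with high $\mathbb{P}$-probability, and combine this with a Cartan/Remez-type estimate for the Lebesgue measure of $\{\varphi\in\mathbb{S}^1:|g(\varphi,\omega)|<\epsilon\}$ at each realisation $\omega$. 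Fubini then delivers the required joint small-ball bound, and summing the upper- and lower-tail contributions closes the proof of the lemma.
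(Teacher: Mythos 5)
The paper does not prove this lemma at all: it is quoted verbatim from Nazarov--Nishry--Sodin \cite[Corollary 2.2]{NNS13}, so there is no ``paper's own proof'' to compare against, and your sketch is an independent attempt at a substantial result.

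Your high-level split is reasonable and your upper-tail argument is correct: Khintchine gives $\|g(\varphi)\|_{L^q(\Omega)}\lesssim_q 1$ uniformly in $\varphi$, and $(\log^+x)^p\lesssim_p x^{1/2}$ handles the contribution of $\{|g|\ge 1\}$. You have also correctly identified where the difficulty lies, namely the intermediate lower-tail regime in which fibrewise Hal\'asz fails. However, the proposed remedy for that regime --- dyadic block decomposition, blockwise Salem--Zygmund, then a Cartan/Remez-type measure estimate --- contains a genuine gap. Any Remez/Tur\'an-type inequality for trigonometric sums carries the degree (or the number of nonzero frequencies) in the exponent: for a polynomial of degree $N$ with $\|g\|_{L^2}\sim 1$, the best such bound is of the shape $|\{\varphi:|g(\varphi)|<\epsilon\}|\lesssim \epsilon^{c/N}$. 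Once you truncate the Fourier series to make this applicable, you introduce a degree parameter $N=N(\{b_m\},\epsilon)$ that is not uniformly bounded as $\{b_m\}$ ranges over the $\ell^2$ unit sphere, and for slowly decaying $\{b_m\}$ the bound degrades to triviality. Controlling this degree-dependence is precisely the hard content of \cite{NNS13}; their argument is not a dyadic truncation plus Remez but an iterative spectral-splitting scheme adapted to the multiplicative structure of the Bernoulli product, designed exactly to sidestep this obstruction. As written, your proposal is a plausible roadmap whose key lower-tail step is not yet a proof.
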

	
	In fact, Lemma \ref{log integrability} implies the following:
	\begin{cor}
		\label{log-integrability corollary}
		Let $f_{\ell}$ be as in Theorem \ref{thm sphere}, $x\in\mathbb{S}^{2}$, and $F_x= F_{x,\ell}$ the scaled version of $f_{\ell}$
		as in \eqref{def of F_x}. Then there exists a constant $C>0$ such that
		$$ \frac{1}{4\pi}\int\limits_{\mathbb{S}^2 \times \Omega} \mathcal{V}(F_x)^2d\sigma < C,$$
		where $\sigma$ is the product measure of the Lebesgue measure on $\mathbb{S}^2$ and $\mathbb{P}(\cdot)$ on $\Omega$.
	\end{cor}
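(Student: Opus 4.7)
The plan is to bound $\mathcal{V}(F_x)$ pointwise by a log-ratio via the doubling-index argument of Section \ref{anti-concentration section}, and then integrate the two halves separately: the numerator using the $L^2$-estimate on $\|f^H\|_{L^\infty(4B^+)}$ already obtained in \eqref{claim 1}, and the denominator, latitude-by-latitude, via the Nazarov--Nishry--Sodin log-integrability bound (Lemma \ref{log integrability}). The main obstacle will be that Lemma \ref{log integrability} requires a unit $L^2$-normalization on $\mathbb{S}^1\times\Omega$, uniform in the latitude; this will be resolved cleanly by the addition formula for spherical harmonics.

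Starting from the argument in the proof of Lemma \ref{anti-concentration}, with $B = B_g(x,1/T)\times(-1/T,1/T)$, Proposition \ref{doubling index bound} gives the a.s.\ inequality
\begin{equation*}
\mathcal{V}(F_x) \lesssim \log\frac{\|f^H\|_{L^\infty(4B^+)}}{|f(x)|},
\end{equation*}
and hence, squaring and splitting into positive/negative parts,
\begin{equation*}
\mathcal{V}(F_x)^2 \lesssim \bigl(\log^+\|f^H\|_{L^\infty(4B^+)}\bigr)^2 + \bigl(\log|f(x)|\bigr)^2.
\end{equation*}
For the first summand, the elementary inequality $(\log^+A)^2\leq 1+A^2$ together with $\mathbb{E}\bigl[\|f^H\|_{L^\infty(4B^+)}^2\bigr]=O(1)$ uniformly in $x$ (see \eqref{claim 1}) yields
\begin{equation*}
\int_{\mathbb{S}^2\times\Omega} \bigl(\log^+\|f^H\|_{L^\infty(4B^+)}\bigr)^2\, d\sigma = O(1).
\end{equation*}

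The heart of the argument is the bound on $\int_{\mathbb{S}^2\times\Omega}(\log|f(x)|)^2\, d\sigma$. In spherical coordinates $x=(\theta,\varphi)$, the basis \eqref{particular base} expresses
\begin{equation*}
f_\ell(\theta,\varphi) = \sum_{m=-\ell}^{\ell} a_m\, b_m(\theta)\, e^{im\varphi}, \qquad b_m(\theta) := \Bigl((2\ell+1)\tfrac{(\ell-m)!}{(\ell+m)!}\Bigr)^{1/2}P_\ell^m(\cos\theta),
\end{equation*}
which, for each fixed $\theta$, is a random Fourier series on $\mathbb{S}^1$ with i.i.d.\ $\pm 1$ Bernoulli coefficients. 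The addition formula for spherical harmonics gives
\begin{equation*}
\sum_{m=-\ell}^{\ell}|b_m(\theta)|^2 = \sum_{m=-\ell}^{\ell}|Y_{\ell,m}(\theta,0)|^2 = \frac{2\ell+1}{4\pi},
\end{equation*}
independently of $\theta$, so that the rescaled series $g(\varphi):=f_\ell(\theta,\varphi)/\sqrt{(2\ell+1)/(4\pi)}$ satisfies $\|g\|_{L^2(\mathbb{S}^1\times\Omega)}=1$. Lemma \ref{log integrability} with $p=2$ then yields $\int_{\mathbb{S}^1\times\Omega}|\log|g||^2\, d\nu\leq C$, and since $f=v(T)^{-1/2}f_\ell$ with $v(T)$ comparable to $2\ell+1$ up to a universal constant, $\log|f|-\log|g|=O(1)$ uniformly in $(\theta,\ell)$. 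Integrating over $\theta$ against $\sin\theta\, d\theta$ gives the required bound and, combined with the previous estimate, completes the proof of the corollary.
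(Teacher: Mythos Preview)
Your proof is correct and follows essentially the same route as the paper: bound $\mathcal{V}(F_x)$ by the log-ratio from \eqref{inequalit nodal volume}, control the numerator via \eqref{claim 1}, and control the denominator by freezing the latitude $\theta$ and applying Lemma~\ref{log integrability} to the resulting Fourier series in $\varphi$. Your treatment of the normalization---invoking the addition formula to see that $\sum_m|b_m(\theta)|^2$ is independent of $\theta$, and tracking the $v(T)^{-1/2}$ factor explicitly---is in fact cleaner than the paper's, which asserts $\mathbb{E}[|f_\ell(\varphi,\theta)|^2]=1$ somewhat loosely.
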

	\begin{proof}
		The proof of Corollary \ref{log-integrability corollary} is very similar to the proof of Lemma \ref{anti-concentration}, so we omit some details while retaining same notation. Thanks to the inequality \eqref{inequalit nodal volume} and the fact that $(X+Y)^2\lesssim X^2+Y^2,$
		we have
		\begin{align}
			\label{9.1.1}
			\int\limits_{\mathbb{S}^2 \times \Omega} \mathcal{V}(F_x)^2 d\sigma\lesssim  \int\limits_{\mathbb{S}^2 \times \Omega}   \left(\log \frac{||f^H_{\ell}||_{L^{\infty}(4B^+)}}{||f^H_{\ell}||_{L^{\infty}(2B^+)}}\right)^2 d\sigma \lesssim O(1)+\int\limits_{\mathbb{S}^2 \times \Omega}   |\log|f_{\ell}||^2 d\sigma,
		\end{align}
		where $\sigma$ is the product measure on $\mathbb{S}^2\times \Omega$. Now, we wish to use Lemma \ref{log integrability} to estimate the second term on the r.h.s. of \eqref{9.1.1}.

		First, we observe that, writing $f_{\ell}=f_{\ell}(\varphi,\theta)$, where $(\varphi,\theta)$ are spherical coordinates, and using Fubini's Theorem, we have
		\begin{align}
			\label{9.1.2}
			\int\limits_{\mathbb{S}^2 \times \Omega}   |\log|f_{\ell}||^2 d\sigma\leq \int\limits_0^{\pi} d\theta \int\limits_{\mathbb{S}^{1} \times \Omega}  |\log|f_{\ell}(\varphi,\theta)||^2 d\nu,
		\end{align}
		where $d\nu= d\varphi \otimes d\mathbb{P}$ is the product measure on $\mathbb{S}^{1} \times \Omega$.
		Writing $f_{\ell}(x)=f_{\ell}(\varphi, \theta)$ and bearing in mind \eqref{particular base}, we have
		$$f_{\ell}(\varphi,\theta)= \sum_{m=-\ell}^{\ell } a_{\ell,m} \left( \frac{(\ell-m)!}{(\ell+m)!}\right)^{1/2}P_{\ell}^m(\cos(\theta))e^{im\varphi}.$$
		Therefore, fixing $\theta$, thanks to the normalisation $\mathbb{E}[|f_{\ell}(\varphi,\theta)|^2]=1$, we may apply Lemma \ref{log integrability} with $b_m= \frac{(\ell-m)!}{(\ell+m)!}^{1/2}P_{\ell}^m(\cos(\theta))$, to see that
		$$\int\limits_{\mathbb{S}^{1} \times \Omega}  |\log|f_{\ell}||^2 d\nu \leq C$$
		for some absolute constant $C>0$. Hence, thanks to \eqref{9.1.2}, the r.h.s. of \eqref{9.1.1} is bounded and this concludes the proof of Corollary \ref{log-integrability corollary}.
	\end{proof}
	
	We are finally in the position to prove Theorem \ref{thm sphere}:
	\begin{proof}[Proof of Theorem \ref{thm sphere}]
		As in the proof of Theorem \ref{main thm}, Lemma \ref{locality}, Fubini's Theorem and Corollary \ref{doubling index lem} give
		\begin{align}
			\label{f1}	\mathbb{E}[\mathcal{V}(f)]=  \frac{4}{\pi}\ell\cdot \left(1 +o_{\ell \rightarrow \infty}(1)\right)\int\limits_{\mathbb{S}^2\times \Omega} \mathcal{V}(F_x)d\sigma,
		\end{align}
		where $\sigma$ is the product measure on $\mathbb{S}^2\times \Omega$. Now, Proposition \ref{asymptotic Gaussianity} implies that
		$$ \mathcal{V}(F_x) \overset{d}{\longrightarrow} \mathcal{V}(F_{\mu}) \hspace{8mm}\ell \rightarrow \infty,$$
		in the product space $\mathbb{S}^2\times \Omega$. Thus, Corollary \ref{log-integrability corollary} together with the Dominated Convergence Theorem  give
		$$ \int\limits_{\mathbb{S}^2\times \Omega} \mathcal{V}(F_x)d\sigma \longrightarrow \int\limits_{\mathbb{S}^2\times \Omega} \mathcal{V}(F_\mu)d\sigma \hspace{8mm}\ell \rightarrow \infty.$$
		Therefore, another application of Fubini's Theorem together with \eqref{f1} give
		\begin{align}
			\label{f2}	\mathbb{E}[\mathcal{V}(f)]= \frac{4}{\pi} \ell\cdot \left(1 +o_{\ell\rightarrow \infty}(1)\right)\int\limits_{\mathbb{S}^2} \mathbb{E}[ \mathcal{V}(F_\mu)]dx
		\end{align}
		Finally, the right hand side of \eqref{f2} can be computed via Lemma \ref{Kac-Rice} and Theorem \ref{thm sphere} follows.
	\end{proof}
	
	\bibliographystyle{siam}
	\bibliography{ExpectationNV}
\end{document}